\documentclass[english,10pt]{amsart}
\usepackage{amsmath, amsfonts, stmaryrd,amssymb,mathtools,esint,mathrsfs}
\usepackage{amsthm,stackrel}

\numberwithin{equation}{section}
\textheight = 21cm
\textwidth = 13cm
\usepackage[french,english]{babel}
 \usepackage[utf8]{inputenc}
 \usepackage[T1]{fontenc}
\usepackage{soul,cancel}

\usepackage[todonotes={textsize=scriptsize}]{changes}
\usepackage[colorlinks=true,linkcolor=blue, citecolor=red]{hyperref}
\usepackage{color}

\usepackage[backend=bibtex]{biblatex}
\addbibresource{biblio.bib}
\usepackage{csquotes}


\newtheorem{thm}{Theorem}
\newtheorem{prop}{Proposition}[section]
\newtheorem{defi}[prop]{Definition}
\newtheorem{lem}[prop]{Lemma}
\newtheorem{coro}[prop]{Corollary}

\newtheorem{rem}[prop]{Remark}


\def\R{\mathbf{R}}

\def\N{\mathbf{N}}
\def\E{\mathbf{E}}

\def\Z{\mathbf{Z}}
\def\T{\mathbf{T}}

\def\R{\mathbb{R}}

\def\N{\mathbb{N}}
\def\E{\mathbb{E}}

\def\Z{\mathbb{Z}}
\def\T{\mathbb{T}}

\def\dd{\mathrm{d}}

\def\B{\mathrm{B}}

\def\W{\mathrm{W}}

\def\C{\mathrm{C}}
\def\L{\mathrm{L}}

\def\H{\mathrm{H}}

\def\ep{\varepsilon}
\def\ffi{\varphi}

\newcommand*{\transp}[2][-2mu]{\ensuremath{\mskip1mu\prescript{\smash{\mathrm t\mkern#1}}{}{\mathstrut#2}}}%





\newcommand{\vertiii}[1]{{\vert\kern-0.08em\vert\kern-0.08em\vert #1 \vert\kern-0.08em\vert\kern-0.08em\vert}}

\title{The Cauchy problem for quasi-linear parabolic systems revisited}
\date{}

\selectlanguage{english}

\begin{document}
\selectlanguage{english}
\author{{\sc Isabelle Gallagher}}
 \address{DMA, \'Ecole normale sup\'erieure, CNRS, PSL   University, 75005 Paris, France
  and UFR de math\'ematiques, Universit\'e Paris Cit\'e,  75013 Paris, France.
\newline
E-mail: {\tt isabelle.gallagher@ens.fr}}
\author{{\sc Ayman Moussa}}
\address{DMA, \'Ecole normale sup\'erieure, CNRS, PSL University and LJLL, Sorbonne Université, Université Paris Cité, 75005 Paris, France \newline
  E-mail: {\tt ayman.moussa@sorbonne-universite.fr}}
\begin{abstract} 
 We study a class of parabolic quasilinear systems, in which
the diffusion matrix is not uniformly elliptic, but satisfies the
Petrovskii condition (positivity of eigenvalues' real part).
Local wellposedness is known since the work of Amann in the 90s, by a
semi-group method.  We first revisit these results in the context of Sobolev spaces modelled on $\textnormal{L}^2$ {and then explore the endpoint Besov case $B^{d/p}_{p,1}$}. We also exemplify our method on the SKT system, showing the existence of local, non-negative, strong solutions.    \end{abstract}

\maketitle

\section{Introduction}
\subsection{Main results}
This article deals with local well-posedness for the following quasilinear parabolic system, set on the~$d$-dimensional torus~$\T^d$:
\begin{align}
\label{eq:PtNL}
  \left\{
  \begin{aligned}
    \partial_t U-\sum_{k=1}^d\partial_k \big[A(U)\partial_k U\big] &= F\, ,\\
   U_{|t=0} &= U^0.
  \end{aligned}
  \right.                                
\end{align}
In this system $U^0: \T^d \rightarrow \R^N$ and $F:\R_{\geq 0}\times \T^d\rightarrow \R^N $ are given, $A:\R^N\rightarrow\textnormal{M}_N(\R)$ is a smooth matrix field and $U:\R_{\geq 0}\times \T^d\rightarrow \R^N $ is the unknown.  Our analysis will rely on a detailed study of the linear case in which a matrix field $M$ is given and one searches for~$V$ solving
\begin{equation}
\label{eq:Pt}
  \left\{
  \begin{aligned}
    \partial_t V-\sum_{k=1}^d\partial_k \big[M\partial_k V\big] &= F\, ,\\
   V_{|t=0} &= V^0.
  \end{aligned}
  \right.                                
\end{equation}
As we shall  see later on, this system already hides several difficulties in order to build a well-posedness theory with propagation of Sobolev norms. As a matter of fact, the following spectral condition will be of utmost importance in our construction (we refer to Section~\ref{appendix:petrovskii} for more on that condition).
\begin{defi}[Petrovskii condition]
A matrix~$B\in\textnormal{M}_N(\R)$ satisfies the Petrovskii condition if it belongs to~$\mathscr{P}$, where
 \begin{equation}   \label{eq:Pet}\mathscr{P} :=\bigcup_{\delta>0}\mathscr{P}_\delta \, ,\quad \mbox{with} \quad \forall \delta \in \R \, , \quad   \mathscr{P}_\delta  := \Big\{B\in\textnormal{M}_N(\R)\,:\,z\in \textnormal{Sp}(B) \Rightarrow \textnormal{Re}(z)\geq \delta\Big\}\, .
\end{equation}
\end{defi}
With this Petrovskii condition at hand, we are in position to state our two main results. We anticipate a notation that will be introduced in Paragraph~\ref{subsec:not} : for $s\in\R$ and~$T>0$ we note $E_T^s$ for the $\H^s(\T^d)$-energy space $\mathscr{C}^0([0,T];\H^s(\T^d))\cap\L^2(0,T;\H^{s+1}(\T^d))$ that we equip  with the norm
   \[\|U\|_{E_T^s}  :=\Big(\|U\|_{\mathscr{C}^0([0,T];\H^s(\T^d))}^2 + \int_0^T \|\nabla U(t)\|_{\H^s(\T^d)}^2\,\dd t\Big)^{1/2}.\] 
   We also define~$X_T^s:=\mathscr{C}^0([0,T];\H^s(\T^d))$ and~$Y_T^s:=\L^2(0,T;\H^{s}(\T^d))$, as well as the set~$Q_T:=[0,T] \times \T^d$.

 The initial data~$U^0 $ will be chosen in a Sobolev space~$ \H^s(\T^d)$, while the force~$F$  will correspondingly lie  in~$ Y_T^{s-1}$ for some~$T>0$. We denote the  functional framework for the data  by~$
   \mathcal D_T^s:= \H^s(\T^d) \times Y_T^{s-1}
      $
      and the size of the data is  measured by
         $$
\|(U^0,F)\|_{  \mathcal D_T^s}:= \|U^0\|_{\H^s(\T^d)} + \|F\|_{Y_T^{s-1}} + \int_0^T \left|\langle F(t)\rangle\right|\,\dd t.
$$
We have denoted by~$ \langle f\rangle$ the average of any function~$f$ over~$\T^d$.
Our main result is the following.
    \begin{thm}[Local well-posedness] \label{thm:exloc}
   Consider a smooth~$A:\R^N\rightarrow \mathscr{P}$, and~$s>d/2$. For any~$(U^0,F)$ belonging to~$\mathcal D_\infty^s$ there exists  $T>0$ and a unique element $U$ of $E_T^s$ which solves the parabolic Cauchy problem  \eqref{eq:PtNL} on $Q_T$.  Moreover, if $(U^0_1,F_1),(U^0_2,F_2)$ lie  in the ball of size~$R$ in~$\mathcal D_\infty^s$, with~$U_1$ and $U_2$ the respective solutions both belonging to the ball of size~$R'$ of~$E_T^s$ for some~$T>0$, then there is a constant~$C $ depending on~$R$ and~$R'$  such that        \begin{align}\label{eq:stabNL}
\|U_1-U_2\|_{E_T^s} \leq C \big \|(U^0_1-U^0_2,F_1-F_2)\big\|_{  \mathcal D_T^s}.
\end{align}
\end{thm}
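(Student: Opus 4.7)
The strategy is Banach fixed point, based on the linear theory for~\eqref{eq:Pt} established earlier in the paper. Given $(U^0,F)\in\mathcal D_\infty^s$, I would linearise at the unknown itself: for $\tilde U$ in a fixed ball $\mathcal B_R\subset E_T^s$ with $\tilde U_{|t=0}=U^0$, define $\Phi(\tilde U)$ to be the $E_T^s$-solution $V$ of
\[
\partial_t V-\sum_{k=1}^d\partial_k\bigl[A(\tilde U)\,\partial_kV\bigr]=F,\qquad V_{|t=0}=U^0,
\]
and then show that for $R$ large and $T>0$ small (both depending only on~$\|(U^0,F)\|_{\mathcal D_\infty^s}$), the map $\Phi$ sends $\mathcal B_R$ into itself and contracts in a weaker norm.

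The first point is that $M:=A(\tilde U)$ fits into the linear framework. Since $s>d/2$, the embedding $\H^s(\T^d)\hookrightarrow\L^\infty$ gives a uniform bound on $\|\tilde U\|_{\L^\infty(Q_T)}$ in terms of $R$, and a Moser-type composition estimate controls~$\|A(\tilde U)\|_{\L^\infty_t\H^s_x}$. More importantly, for $T$ small enough $\tilde U$ remains uniformly close to~$U^0$ in~$\L^\infty(Q_T)$, so by continuity of the spectrum and since $A\circ U^0$ already takes values in~$\mathscr P$, one gets a uniform $\delta>0$ with $A(\tilde U(t,x))\in\mathscr P_\delta$ throughout $Q_T$. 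The linear theorem then yields a bound of the form $\|\Phi(\tilde U)\|_{E_T^s}\le \Psi\bigl(\|(U^0,F)\|_{\mathcal D_T^s},\|\tilde U\|_{E_T^s}\bigr)$, and stability of the ball follows by choosing $R\sim \|(U^0,F)\|_{\mathcal D_\infty^s}$ and shrinking~$T$ so that the contribution of the nonlinear coefficient is absorbed.

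For the contraction I would compare $W:=\Phi(\tilde U_1)-\Phi(\tilde U_2)$, which solves the linear equation with coefficients $A(\tilde U_1)$, trivial initial data, and source $\sum_k\partial_k\bigl[(A(\tilde U_1)-A(\tilde U_2))\,\partial_k\Phi(\tilde U_2)\bigr]$. Because this source costs one derivative on $\Phi(\tilde U_2)$, one cannot close at regularity~$s$; I would instead estimate~$W$ in the weaker space~$E_T^{s-1}$ (or even $E_T^0$), where the source is controlled by $\|\tilde U_1-\tilde U_2\|_{E_T^{s-1}}$ times the uniform $E_T^s$-bound on~$\Phi(\tilde U_2)$, with an extra factor~$T^{1/2}$ coming from Cauchy--Schwarz in time. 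Completeness of the weaker space, combined with weak compactness in~$E_T^s$ and lower semicontinuity of its norm, produces a strong $E_T^s$-solution. The very same comparison applied to two honest solutions of~\eqref{eq:PtNL} with distinct data yields simultaneously uniqueness and the stability estimate~\eqref{eq:stabNL}.

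The main obstacle is the regularity mismatch between the level~$s$ at which the quasilinear term makes sense and the lower level at which the contraction closes: one needs the linear estimate to be available at several regularity scales, with constants depending on~$M$ only through its $\L^\infty_t\H^s_x$-norm and its Petrovskii parameter~$\delta$, and with Lipschitz dependence on~$M$ in a mild norm. This puts real pressure on the linear theory, since Petrovskii ellipticity does not provide the coercivity that would make such a multi-scale estimate automatic. A secondary care is the zero-mode: averaging~\eqref{eq:PtNL} over~$\T^d$ shows that~$\langle U\rangle(t)=\langle U^0\rangle+\int_0^t\langle F\rangle$, which is why the data norm contains the additional term $\int_0^T|\langle F(t)\rangle|\,\dd t$, and this contribution must be carried separately through every estimate.
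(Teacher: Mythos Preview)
Your overall plan (Banach fixed point via the linear theory) is the paper's plan too, but two of your key design choices diverge from the paper and one of them hides a real gap.

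\medskip

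\textbf{The paper closes the contraction at level $s$, not at $s-1$.} Your claim that ``one cannot close at regularity~$s$'' is an artefact of centring the ball at nothing in particular with a large radius~$R$. The paper instead introduces the reference profile $U_F:=\Theta(0)$, the solution of $L_{A(0)}U_F=F$ with data~$U^0$, and runs the fixed point in a \emph{small} ball of $G_T^s:=E_T^s\cap\mathscr C^{0,\alpha_s}(Q_T)$ centred at~$U_F$. The contraction estimate is
\[
\|\Theta(U_1)-\Theta(U_2)\|_{G_T^s}\ \lesssim\ \|U_1-U_2\|_{X_T^s}\,\|\nabla\Theta(U_2)\|_{Y_T^s},
\]
using only the algebra property of $\H^s$ and the composition Lemma~\ref{lem:lipsob}. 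The smallness does \emph{not} come from a $T^{1/2}$ factor (your mechanism is not substantiated: the source term lands in $Y_T^{s-1}$ as a product of an $X_T^s$ factor and a $Y_T^s$ factor, with no spare power of~$T$), but from the fact that $\|\nabla\Theta(U_2)\|_{Y_T^s}\leq r+\|\nabla U_F\|_{Y_T^s}$ and the constant-coefficient estimate of Proposition~\ref{prop:systlin} (whose constant is $T$-independent) forces $\|\nabla U_F\|_{Y_T^s}\to0$ as $T\to0$. This is the crucial idea you are missing; it lets the paper avoid the two-norm scheme entirely and reach the sharp threshold $s>d/2$. Your alternative of contracting in $E_T^{s-1}$ would need the $\H^\sigma$ linear estimate and the composition lemma at $\sigma=s-1$, which the paper only provides for $\sigma>d/2$ --- an extra derivative you cannot afford.

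\medskip

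\textbf{The linear theory needs H\"older-in-time coefficients, which your ball does not supply.} Theorems~\ref{thm:PetrovskiiL2}--\ref{thm:PetrovskiiHs} assume $M\in\mathscr C^{0,\alpha}(Q_T;\mathscr P)$; this H\"older regularity is genuinely used (freezing of coefficients in Lemma~\ref{lem:loc}, and control of~$[M]_\alpha$). A generic $\tilde U\in E_T^s$ has no time regularity beyond continuity, so $A(\tilde U)$ need not lie in $\mathscr C^{0,\alpha}(Q_T)$ and your map~$\Phi$ is not even defined on $\mathcal B_R\subset E_T^s$. Relatedly, your assertion that ``for $T$ small enough $\tilde U$ remains uniformly close to $U^0$ in $\L^\infty(Q_T)$'' is false: on a ball of $E_T^s$ with prescribed initial value there is no uniform modulus of continuity at $t=0$. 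The paper fixes both issues by working in the affine space $G_T^s(U^0)=E_T^s\cap\mathscr C^{0,\alpha_s}(Q_T)$, which is closed under~$\Theta$ because Theorem~\ref{thm:PetrovskiiHs} returns the solution in $\mathscr C^{0,\alpha_s}(Q_T)$ as well (via Lemma~\ref{lem:sobo2}). Your final paragraph on stability/uniqueness at level~$s$ via Gr\"onwall is, however, exactly what the paper does in Section~\ref{sec:road}.
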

The restriction~$s>d/2$ guarantees some H\"older continuity in space (of  order~$s-d/2$) of the initial data and the solution. It is to be compared with the restriction~$1>d/p$ of the related work of Amann~\cite{amann} in~$W^{1,p}$ which we present  in Paragraph~\ref{state of the art} below.   {It is actually possible to reduce to a "scaling zero" result by resorting to Besov spaces: a theorem in the framework of~$B^\frac dp_{p,1}$, for any finite~$p$, is presented in Section~\ref{end-point Besov}.}

With this (local in time) well-posedness setting we can define the lifetime of the solution associated with~$(U^0,F)\in \mathcal D_\infty^s$ for some~$s>d/2$  by
\begin{align*}
T^\star_s(U^0,F) := \sup \big\{T>0\,:\, \exists\, U \in E_T^s\textnormal{ solving \eqref{eq:PtNL} on } Q_T\big\}.
\end{align*}
The construction leading to Theorem~\ref{thm:exloc} provides  a lifetime $T^\star_s$ which depends   on the data~$(U^0,F)$ not only through its size but also (in some sense) through its form. Actually
it is possible, thanks to a propagation of regularity result, to prove that the lifetime actually only depends on the size of the data.
Also, if the data is small enough the lifetime is infinite. We have more precisely the following result.  \begin{thm}[Lifetime and blow-up] \label{thm:lbu}
   Consider the assumptions of Theorem~{\rm\ref{thm:exloc}}.
   \begin{itemize}
   \item[$(i)$] There exists $\ep>0$ depending only on $A$ and $s$ such that
     \begin{align*}
\|(U^0,F)\|_{  \mathcal D_\infty^s} < \ep \Longrightarrow T_s^\star(U^0,F) = \infty\, .
     \end{align*}
   \item[$(ii)$]      
There exists a decreasing function $\ffi$   such that  $T^\star_s(U^0,F) \geq \ffi(\|(U^0,F)\|_{  \mathcal D_\infty^s})$.
   \item[$(iii)$] If $T^\star_s(U^0,F)<+\infty$, then $\lim_{t\rightarrow T^\star_s(U^0,F)}\|U(t)\|_{\H^s(\T^d)} = \infty$.
   \end{itemize}
 \end{thm}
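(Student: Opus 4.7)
The three items all follow from a priori control of the $E_T^s$-norm of a solution in terms of the data norm $\|(U^0,F)\|_{\mathcal D_T^s}$, in two complementary regimes. For (ii) and (iii) it suffices to have a \emph{local} estimate of the form
\begin{equation*}
\|U\|_{E_T^s}^2 \;\leq\; C_0\,\|(U^0,F)\|_{\mathcal D_T^s}^2 + T\,\mathcal M\bigl(\|U\|_{E_T^s}\bigr)\,\|U\|_{E_T^s}^2 ,
\end{equation*}
with $\mathcal M$ continuous and nondecreasing, while (i) further requires a \emph{global-in-time} control of the mean-zero part of~$U$, obtained by exploiting the Poincar\'e inequality on~$\T^d$ together with the control of the mean via $\int_0^\infty |\langle F\rangle|\,\dd t$.

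\textbf{Derivation of the a priori estimates.} Viewing \eqref{eq:PtNL} as the linear system~\eqref{eq:Pt} with $M:=A(U)$, I would apply the $\H^s$-energy estimate built for~\eqref{eq:Pt} in the preceding sections: the Petrovskii hypothesis $A(U)\in\mathscr P$ provides the parabolic smoothing $c\int_0^T\|\nabla U\|_{\H^s}^2\,\dd t$ through a G{\aa}rding-type inequality (not from pointwise coercivity of $A$ itself), and the commutators $[\partial^\alpha,A(U)]\partial_k U$ arising from $s$-fold differentiation are controlled by Moser-type product estimates thanks to $s>d/2$. The mean of~$U$ solves $\partial_t\langle U\rangle=\langle F\rangle$, giving $|\langle U(t)\rangle|\leq|\langle U^0\rangle|+\int_0^t|\langle F\rangle|\,\dd\tau$, which is precisely why the data norm carries the extra integral of $|\langle F\rangle|$. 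The mean-zero part $\widetilde U$ enjoys the Poincar\'e inequality $\|\widetilde U\|_{\H^s}\lesssim\|\nabla\widetilde U\|_{\H^s}$, which together with the parabolic smoothing yields a coercive bound on $\int_0^T\|\widetilde U\|_{\H^s}^2\,\dd t$ that is uniform in~$T$ for small data.

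\textbf{From a priori estimates to (i), (ii), (iii).} For~(ii), set $R:=\|(U^0,F)\|_{\mathcal D_\infty^s}$ and run a continuity argument on $T\mapsto\|U\|_{E_T^s}$: as long as $T\mathcal M(\|U\|_{E_T^s})\leq 1/2$, the local estimate self-improves into $\|U\|_{E_T^s}\leq\sqrt{2C_0}\,R$; solving $T\mathcal M(\sqrt{2C_0}R)\leq 1/2$ defines $\ffi(R)>0$, decreasing in~$R$, and combining with Theorem~\ref{thm:exloc} one obtains $T^\star_s(U^0,F)\geq\ffi(R)$. For~(iii), suppose $T^\star<\infty$ but $\sup_{t<T^\star}\|U(t)\|_{\H^s}\leq M$; restart~\eqref{eq:PtNL} at $t_0<T^\star$ with data $(U(t_0),F(t_0+\cdot\,))$, whose $\mathcal D_\infty^s$-norm is bounded independently of~$t_0$ by $M+\|F\|_{Y_\infty^{s-1}}+\int_0^\infty|\langle F\rangle|\,\dd t$; by~(ii), this restart survives on an interval of length at least $\ffi(M+\cdots)$, which exceeds $T^\star-t_0$ for $t_0$ close enough to~$T^\star$, contradicting the maximality of~$T^\star$. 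For~(i), smallness of~$R$ makes the mean-zero coercive bound close globally and keeps the nonlinear correction below $1/2$ for \emph{all}~$T$, forcing $T^\star=\infty$.

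\textbf{Main obstacle.} The technical crux is~(i), where one must obtain a time-global bound on~$\T^d$: the absence of spatial decay means the smallness mechanism must come from the Poincar\'e-driven relaxation of the mean-zero component, combined with the integral (not pointwise-in-time) control of the mean through the data norm. Since the Petrovskii condition is only spectral, the parabolic smoothing itself must be extracted from the refined linear analysis of~\eqref{eq:Pt} performed earlier, rather than from a naive symmetric energy estimate.
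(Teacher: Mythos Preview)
Your overall architecture (a priori estimate plus continuity) differs from the paper's, which argues both (i) and (ii) through the Picard scheme of Section~\ref{sec:road} rather than through closed a priori bounds on a given solution. This would be fine if your key estimate were available, but there is a concrete gap: the inequality
\[
\|U\|_{E_T^s}^2 \leq C_0\,\|(U^0,F)\|_{\mathcal D_T^s}^2 + T\,\mathcal M(\|U\|_{E_T^s})\,\|U\|_{E_T^s}^2
\]
does not follow from the linear theory developed in the paper. The $\H^s$ estimate of Theorem~\ref{thm:PetrovskiiHs} (or Corollary~\ref{coro:useful}) carries constants $\lesssim_{T,[M]_\alpha,\|M\|_{Y_T^{s+1}}}$ that depend on $U$ itself through $M=A(U)$ and grow with $T$ in an implicit way; the commutator terms in the proof produce contributions of the type $\int_0^T\|\nabla A(U)\|_{\H^s}^2\|U\|_{\H^s}^2\,\dd t\lesssim \|U\|_{E_T^s}^4$, with no explicit factor of $T$ in front. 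So the continuity argument for (ii) as you wrote it does not close. The paper circumvents this by revisiting the sufficient existence condition \eqref{ineq:ultimate}: it drops to a lower regularity $\sigma\in(d/2,s)$ with $s\leq\sigma+1$, interpolates $\|\nabla U_F\|_{Y_T^\sigma}\leq\|U_F\|_{X_T^\sigma}^\theta\|\nabla U_F\|_{Y_T^s}^{1-\theta}T^{\theta/2}$ to manufacture the explicit power of $T$, obtains $T_\sigma^\star\geq\ffi(R)$, and then invokes propagation of regularity (Corollary~\ref{coro:thm2}) to conclude $T_s^\star\geq T_\sigma^\star$.

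The gap is more serious for (i). Even granting your estimate, the factor $T$ in front of the nonlinear term grows without bound as $T\to\infty$, so smallness of the data cannot keep it below $1/2$ for all $T$; your ``Poincar\'e-driven relaxation'' is too vague to fill this in. The paper's mechanism is different and essential: it runs the Picard map through the \emph{constant} matrix $A(0)$ rather than $A(U)$, writing
\[
L_{A(0)}U^\star=\sum_k\partial_k\big([A(U)-A(0)]\partial_k U^\star\big)+F\, ,
\]
and then applies Proposition~\ref{prop:systlin}, whose constant $\mathrm C_{A(0),\delta}$ is \emph{independent of $T$}. This yields $\|U^\star\|_{G_T^s}\leq \mathrm C_{A(0)}\big(\|(U^0,F)\|_{\mathcal D_\infty^s}+\ffi(\|U\|_{G_T^s})\|U\|_{G_T^s}\|U^\star\|_{G_T^s}\big)$ with no $T$-dependence anywhere, so that for $r$ small the ball $\B_{G_T^s}(0,r)$ is stable and $\Theta$ contracts uniformly in $T$. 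Your argument for (iii) via restart and (ii) matches the paper's.
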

 {Of course, in the previous results, lower order nonlinearities may be added without changing the conclusion of these statements except point $(i)$ of the second one (unless structural assumptions on the lower order term are added). For instance, estimate \eqref{eq:stabNL} of Theorem~\ref{thm:exloc} allows for a direct use of Picard's fixed-point theorem to establish the following corollary, useful for cross-diffusion systems.}

 \begin{coro}\label{coro:reac}
   Consider a smooth function $R:\R^N\rightarrow\R^N$ and the assumptions of Theorem~{\rm\ref{thm:exloc}}. All conclusions of Theorem~{\rm\ref{thm:exloc}} and Theorem~{\rm\ref{thm:lbu}} except point $(i)$ of the latter hold for the following system
   \begin{align}
\label{eq:PtNL:reac}
  \left\{
  \begin{aligned}
    \partial_t U-\sum_{k=1}^d\partial_k \big[A(U)\partial_k U\big] &= F+R(U)\, ,\\
   U_{|t=0} &= U^0.
  \end{aligned}
  \right.                                
\end{align}
 \end{coro}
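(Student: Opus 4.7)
The plan is to treat $R(U)$ as a perturbation and to set up a fixed-point argument whose core estimate is the stability bound~\eqref{eq:stabNL}. Fix data $(U^0,F)\in \mathcal D_\infty^s$ and, for $V$ in a ball of $E_T^s$ sharing the initial datum $U^0$, define $\Phi(V):=U$ where $U$ is the unique solution provided by Theorem~\ref{thm:exloc} of
\[
\partial_t U-\sum_{k=1}^d\partial_k\!\big[A(U)\partial_k U\big]=F+R(V),\qquad U_{|t=0}=U^0.
\]
This is legitimate because, for $s>d/2$, $\H^s(\T^d)$ embeds in $\L^\infty(\T^d)$ and is stable under composition with the smooth field $R$, so $V\in \mathscr C^0([0,T];\H^s(\T^d))$ yields $R(V)\in \mathscr C^0([0,T];\H^s(\T^d))\subset Y_T^{s-1}$, and the average term in the data norm is finite as well. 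Any fixed point of $\Phi$ is a solution of~\eqref{eq:PtNL:reac}.

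The heart of the argument is the contraction property. Applying the stability estimate~\eqref{eq:stabNL} to the two data $(U^0,F+R(V_1))$ and $(U^0,F+R(V_2))$, for $V_1,V_2$ in a ball of radius $M$ in $E_T^s$, yields
\[
\|\Phi(V_1)-\Phi(V_2)\|_{E_T^s}\leq C(M)\Big(\|R(V_1)-R(V_2)\|_{Y_T^{s-1}}+\int_0^T\!\big|\langle R(V_1)-R(V_2)\rangle(t)\big|\,\dd t\Big).
\]
The algebra property of $\H^s(\T^d)$ and the smoothness of $R$ supply the tame bound $\|R(V_1)-R(V_2)\|_{\H^s}\leq C(M)\|V_1-V_2\|_{\H^s}$ uniformly on the ball, and combined with the elementary inequality $\|f\|_{Y_T^s}\leq T^{1/2}\|f\|_{\mathscr C^0([0,T];\H^s)}$ this controls both right-hand side terms by $C(M)T^{1/2}\|V_1-V_2\|_{E_T^s}$. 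A strict contraction is then obtained by choosing $T$ small compared to $C(M)^{-2}$; stability of the ball under $\Phi$ follows from the same estimate applied to the pair $\Phi(V)$ and $V^\star$, where $V^\star\in E_T^s$ is the solution of~\eqref{eq:PtNL} without reaction provided by Theorem~\ref{thm:exloc}. Banach's fixed-point theorem then produces the unique local solution of~\eqref{eq:PtNL:reac}.

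The Lipschitz dependence~\eqref{eq:stabNL} transfers to~\eqref{eq:PtNL:reac} by absorbing $\|R(U_1)-R(U_2)\|_{Y_T^{s-1}}\leq C(M)T^{1/2}\|U_1-U_2\|_{E_T^s}$ on the left-hand side for $T$ small, iterating over short subintervals if necessary. Point $(ii)$ of Theorem~\ref{thm:lbu} is inherited because the Picard time depends only on $M$, hence only on $\|(U^0,F)\|_{\mathcal D_\infty^s}$ via the size of the reference solution $V^\star$; point $(iii)$ follows from the same continuation scheme as in the unperturbed proof, since $R(U)$ stays bounded in $\mathscr C^0([0,T];\H^s)$ as long as $\sup_{[0,T]}\|U\|_{\H^s}$ does, so one may always restart. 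Point $(i)$ genuinely fails in general, because $R(0)$ need not vanish and small data then do not imply small forcing, unless an additional structural hypothesis is imposed on $R$.

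The main obstacle will be the bookkeeping between the ball radius $M$ and the time $T$: the constant $C$ in~\eqref{eq:stabNL} depends on the norms of the iterates, and so does the composition estimate for $R$, so one must fix $M$ first (large enough to accommodate $V^\star$) and only then shrink $T$ to meet $C(M)T^{1/2}<\tfrac12$, all the while ensuring via Theorem~\ref{thm:lbu}$(ii)$ that $\Phi(V)$ itself exists up to time $T$ uniformly on the ball. Everything else is a routine transposition of the existence, stability and blow-up arguments already established for~\eqref{eq:PtNL}.
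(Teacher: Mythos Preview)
Your proposal is correct and follows exactly the approach the paper indicates: the authors state just before the corollary that ``estimate~\eqref{eq:stabNL} of Theorem~\ref{thm:exloc} allows for a direct use of Picard's fixed-point theorem,'' and you carry out precisely this Picard scheme, using the stability estimate to control $\|\Phi(V_1)-\Phi(V_2)\|_{E_T^s}$ in terms of $\|R(V_1)-R(V_2)\|$ and gaining the factor $T^{1/2}$ from the embedding $\mathscr C^0([0,T];\H^s)\hookrightarrow Y_T^{s-1}$. Your handling of the bookkeeping (fix the ball radius from the size of the reference solution, then shrink $T$; invoke Theorem~\ref{thm:lbu}$(ii)$ to guarantee $\Phi(V)$ exists up to $T$ uniformly on the ball) is the right way to close the loop that the paper leaves implicit.
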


\subsection{State of the art}\label{state of the art}
\ 
 {In this article, we focus on parabolic systems, which must be distinguished from their scalar counterparts (parabolic equations). For this reason, the literature reviewed in this section concerns only systems. The difficulties encountered in the study of scalar parabolic equations are of a different nature; this is partly due to the maximum principle which plays an important role in the scalar case, and is absent in the case of systems. For the scalar case (specifically linear parabolic equations), we refer, for instance, to the classical work of Krylov \cite{krylov}, which is a standard reference on the subject.} Parabolic systems have been studied for a long time. The pioneer contribution of Petrovskii \cite{petrorigine} seems to be the starting point of the story. Back then existence and uniqueness of solution for parabolic \emph{linear} systems was yet to be explored. The major step of Petrovskii in this context is the discovery of a condition on those linear systems ensuring the existence and uniqueness of a global solution. For a parabolic system in divergence form (non-constant coefficients $a_{ij}^{k\ell}$, unknown $U:=(u_i)_{1\leq i\leq M}$ considered on~$\R_{\geq 0}\times\T^d$) 
\begin{align}\label{eq:u}
\partial_t u_i - \sum_{k,\ell=1}^d \sum_{j=1}^M \partial_k \Big[a_{ij}^{k\ell} \partial_\ell u_j \Big] = 0 \, , 
\end{align}
Petrovskii's condition requires that for all vectors~$\xi\in\R^d$ of euclidean norm $1$, the matrix field~$A_\xi := (\sum_{k,\ell} a_{ij}^{k\ell} \xi_k\xi_\ell)_{ij}$ has a spectrum lying in the set~$\{z\in\mathbb{C}\,:\,\textnormal{Re}(z)>0\}$. In the literature some references can be found in which this previous condition is replaced by $\langle a_\xi X,X\rangle \geq 0$ which amounts to asking that the \emph{symmetric part} of the matrix field $A_\xi$ satisfies Petrovskii's condition. Exploring the very same linear system of equations \eqref{eq:u} with this latter assumption is by far more restrictive and actually flushes out all the subtlety of the problem, because under this condition the system \eqref{eq:u} has obvious energy estimates.  {As opposed to Petrovskii's, this stronger ellipticity condition \emph{is not} intrinsic to the system for it depends on set of coordinates chosen for $U$: one can clearly exhibit a symmetric positive matrix $A$ and an invertible matrix $P$ for which $PAP^{-1}$ does not have a positive symmetric part. Let us precise that such a symmetry assumption on the same system \eqref{eq:u} \emph{does not} fall into the scope of any symmetrization procedure (which usually changes the evolution form of the system). We will elaborate on these transformations later on, when dealing with the quasilinear setting.} For the moment, let us proceed with the state of the art for the linear case, focusing only in the literature which treat systems following Petrovskii's condition. In \cite{petrorigine}, the tensor field $A=(a_{ij}^{k\ell})_{i,j,k,\ell}$  depends only on the time variable and the setting is rather regular both for the data and the   solution; existence of a solution is obtained by means of a fundamental solution. Let us precise however that Petrovskii's condition and construction also hold for higher order parabolic systems. In the two decades following \cite{petrorigine}, several contributions extended this study of the Cauchy problem to more general systems and in  less regular settings, see \cite{aronson,lady,eidelman} to cite a few. We also refer to the bibliographical remarks section of the book \cite{friedman} of Friedman and to the monograph \cite{ladyfat},  {especially \cite[Chapter VII, \S~8]{ladyfat} in which Petrovskii's condition is stated for higher order systems together with a comprehensive description of the existing literature at that time}. All those cited works rely on the condition exhibited by Petrovskii, with a construction of the fundamental solution (exception made of \cite{mizo} which relies on semigroup theory). Further generalizations of this condition encompassing even more general systems and sets of functions have been explored, see for instance the review \cite{pala} on the Gel'fand-Shilov theory for parabolic systems. 

\medskip

Leaving the realm of \emph{linear} systems, the literature is by far less generous. For quasilinear parabolic systems, Amann's work \cite{amann} seems to be the only reference covering a variety of cases comparable to the Petrovskii theory for linear systems. Without surprise, the work of Amann relies crucially on Petrovskii's condition (Amann speaks of the \emph{normal ellipticity condition}). Instead of the linear system \eqref{eq:u}, Amann tackles the following quasilinear one
\begin{align}\label{eq:quasi:u}
\partial_t u_i - \sum_{k,\ell=1}^d\sum_{j=1}^M \partial_k \Big[a_{ij}^{k\ell}(U) \partial_\ell u_j \Big] = 0\, , 
\end{align}
where the tensor $A:=(a_{ij}^{k\ell})_{i,j,k,\ell}$ now depends on as many variables as the system and takes its values in the set of tensors satifying Petrovskii's condition.  {An important remark is in order. Quite frequently, diffusive systems arising from physics, chemistry or biology offer the dissipation of some functional (an entropy) along the flow of their solution. It is also known, at least formally, that the existence of such an entropy is equivalent to a symmetrization procedure for the system \cite{degond,kawashizu,kawa}. This type of transformation allows to rewrite the system \eqref{eq:quasi:u} for another set of unknowns (the entropic variables) $V:=(v_i)_i$ solving}\begin{align}\label{eq:u:symm}
\sum_{j=1}^N c_{ij}(V) \partial_t v_j - \sum_{k,\ell=1}^d\sum_{j=1}^M \partial_k \Big[b_{ij}^{k\ell}(V) \partial_\ell v_j \Big] = 0\, .
\end{align}
 {In this new formulation, the matrix field $C:= (c_{ij})_{i,j}$ and the tensor field $B:=(b_{ij}^{k\ell})_{i,j,k,\ell}$ are symmetric---hence the term \emph{symmetrization}. As noticed in \cite{gio}, whenever this procedure can be carried out, Petrovskii's condition is equivalent to the positivity of the symmetric tensor~$B(V)$. However, for the symmetrized system~\eqref{eq:u:symm}, the positivity of $B(V)$ is less useful --- compared to that of $A(U)$ in~\eqref{eq:quasi:u} --- for the construction of strong solutions. Indeed, the symmetrized formulation is particularly well suited for studying entropy dissipation or for applying perturbative methods to construct local (or global) solutions near an equilibrium. On the other hand, it is less appropriate than~\eqref{eq:quasi:u} for deriving energy estimates involving spatial derivatives of $U$, or for using semigroup theory in the analysis of evolution equations. That is probably the main reason why, even though applying his theory to several physical models enjoying a symmetrization property, Amann only focused on the formulation \eqref{eq:quasi:u} (under Petrovskii's condition) to produce local well-posedness theorems leading to Sobolev-valued solutions.} More precisely, it is proven in \cite{amann} that given $p>d$ and any initial data in $\W^{1,p}(\T^d)$ there exists a unique $\W^{1,p}(\T^d)$-valued solution~$U$ to \eqref{eq:quasi:u},  in a vicinity of the origin in $\R_{\geq 0}$; if the maximal lifetime of this solution is finite, then blow-up occurs in the $\W^{1,p}(\T^d)$ norm. As a matter of fact, Amann's theory allows for even more complicated systems: it encompasses the boundary-value problem on a domain of $\R^d$, with extra dependence on $t$ and $x$ for the tensor $A$ and more (lower order) terms in the system. Amann's theory is a highly complex machinery relying on several non-trivial ingredients: general interpolation, maximal regularity and analytic semigroup theory. We also mention that 10 years ago, Pierre-Louis Lions gave a series of lectures on parabolic systems \cite{pilou} in which part of the bibliographical material that we have cited here is presented together with a possible strategy to build local solutions.

Systems like \eqref{eq:quasi:u} (quite often with a non-vanishing source term) arise naturally in several contexts as a model of \emph{diffusion} in a multicomponent setting. The choice of a diagonal diffusion tensor $A$ (that is $A_{ij}^{k\ell} = 0$ for $k\neq \ell$) corresponds to standard or \emph{isotropic} diffusion while non-diagonal diffusion tensor corresponds to \emph{anisotropic} diffusion. The latter case can be preferred when the quantities at stake evolve in a highly heteregeneous environnement in which the brownian motion from which \eqref{eq:quasi:u} originates  is not completely symmetric in all directions. These types of models do exist (see for instance \cite{chen_perthame} or \cite{hillen}) but their use is rather limited in comparison with the isotropic case. For this reason and because this work originates from questions arising in population dynamics (see the SKT model below) we have chosen to focus here only on systems of the form \eqref{eq:PtNL}, that is exactly the case of isotropic diffusion. This class of systems already contains a large number of models, the mathematical analysis of which is highly non-trivial.   {This includes renowned cases of models describing chemical concentrations, cell density, gaz mixtures or population densities. All those systems, originally introduced in a modelling purpose, offered to the mathematical community genuine and challenging questions about their behavior, be it existence and uniqueness of solutions, blow-up or long-time behavior. For instance, the sole case of gaseous mixtures \emph{via} fully coupled nonlinear models -- such as the Maxwell-Stefan system -- has given rise to a substantial literature. We can cite for instance the pioneering works \cite{gio,giomassot}, which established well-posedness near equilibrium or more recently, the works of \cite{bothe} and~\cite{jungstel} who extended this analysis in non-perturbative regimes: one using Amann's theory to construct local solutions, the other relying on entropy dissipation methods to obtain global weak solutions. For a more comprehensive description of those diffusive models and the associated references, we refer to \cite[Chapter 4]{ansgar} or the recent lecture notes~\cite{jung_lecture_notes}.}  

\medskip

A common feature shared by those models is their \emph{cross-diffusion} aspect: even though the diffusion operator used on each component is \emph{isotropic}, several components of the system undergo  the influence of other components on the \emph{intensity} of its diffusion. It is a remarkable fact that even if the environnement in which the components evolve is completely isotropic, the sole mutual influence on the intensity of their diffusion can lead to asymmetric patterns. A spectaculary instance of this phenomenon is observable in the SKT (for Shigesada, Kawaski and Teramoto) model introduced in \cite{skt}. In this cross-diffusion system (which falls into the scope of \eqref{eq:PtNL}), even though the diffusion is isotropic, stable segregation steady states are possible corresponding to cases in which each of the species shares out the available space, in some sense. The SKT model and its generalizations are iconic examples of the possible use of Amann's theory. If global weak solutions are known to exist thanks to the (rather lately discovered) entropy structure for those systems (see \cite{chen_jun,dlmt} and the references therein), as far as our knowledge goes the only way to build (local) strong solutions is to rely on Amann's theory (as noticed by Amann himself in \cite{amann}). From this step, a considerable amount of attempts to prove the existence of \emph{global} strong solutions to the SKT system (or its variants) emerged (see \cite{HNP,GMT} and the references therein for the most recent improvements). In all those works, Amann's theory is used as a black box and the quest is reduced to the denial of the blow-up criterion which holds in case of finite lifetime, as established by Amann.

\medskip

This work aims at proposing an alternative approach to the construction of local strong solutions for quasilinear parabolic system like \eqref{eq:PtNL} (satisying Petrovskii's condition), using relatively few elaborate tools, in comparison with Amann's construction. From this point of view, our approach differs from \cite{amann} by the fact that we do not use any abstract result on parabolic equations (no semi-group theory nor maximal regularity) but we rely instead on Fourier analysis and the paraproduct  of Bony \cite{bony} to treat the most severe non-linearities of the system. In this way, we manage to build solutions in a finer scale of spaces, but yet comparable: our solutions live in $\H^s(\T^d)$ for $s>d/2$ whereas Amann's in $\W^{1,p}(\T^d)$ for~$p>d$. In the specific example of the SKT model, we hope that this new path will shed some light on the question of the possible blow-up of these solutions, at least in the periodic setting that we consider. 

\subsection{Sign-preserving property and application to the SKT model}

As repeatedly noticed by Amann \cite{amann} and contrary to the scalar case, parabolic systems satisfying Petrovskii's condition do not offer any maximum principle. When dealing with diffusive models aiming at describing the evolution of \emph{densities}, the non-negativity of the solution is a crucial property of the model that one would like to propagate from the initial data. This is not a harmless detail from the point of view of mathematical analysis either as it may happen (see below for some examples) that Petrovskii's condition is only satisfied on the cone of non-negative vectors. This transference of non-negativeness (component-wise) from the initial data to the solution on its whole lifetime is tightly linked to the structure of the system. We give below a sufficient condition on the matrix field to ensure this propagation. This condition was originally suggested at the formal level in \cite{pilou} for the general case of parabolic systems that we consider in this article {; note that an analogous condition has also been introduced in the specific case of multicomponent gas models in \cite[Section 7.3.3]{gio}}. In order to motivate the following definition, notice that in the case of systems, the non-negativity of  the diagonal part of the operator  alone does not ensure the preservation of the sign of the solution, due to the  presence of lower-order terms. These terms will   not affect the preservation of sign only if they are themselves in some sense diagonal, as presented in the coming Definition~\ref{def:nonneg}. Proposition~\ref{prop:sign} and Theorem~\ref{thm:SKT} below enlight  the relevance of that definition  in our setting of solutions. 

For $V\in\R^N$ the notation $\textnormal{diag}(V)$ refers to the diagonal square matrix of size $N$ with entries given by the components of $V$. The  partial order $\geq$ on $\R^N$ or $\textnormal{M}_N(\R)$ has to be understood component-wise. 
\begin{defi}\label{def:nonneg}
  A matrix field $A:\R^N\rightarrow\textnormal{M}_N(\R)$ is said to be \textsf{sign-preserving} if there exist smooth maps $D:\R^N\rightarrow\textnormal{diag}(\R^N)$ and $B:\R^N\rightarrow \textnormal{M}_N(\R)$ such that
  \begin{itemize}
  \item[$\bullet$] $A(U)= D(U)+\textnormal{diag}(U)B(U)$ ;
    \item[$\bullet$] for some nonnegative real number $\alpha$ and any $U\geq 0$ one has $D(U)\geq \alpha\textnormal{I}_N$.
  \end{itemize}
\end{defi} \ 
The relevance of that  definition stems from the following proposition, proved in Section~\ref{proofprop:sign}.
\begin{prop}\label{prop:sign}
Fix~$s>d/2+2$. Let $\rho:\R^N\rightarrow\R^N$ be a given smooth function and define~$R(U):=\textnormal{diag}(U)\rho(U)$. Consider a smooth sign-preserving matrix field $A$, and a solution~$U$ to the Cauchy problem \eqref{eq:PtNL:reac}
in~$E_T^s$ associated with
non-negative~$(U^0,F)\in\mathcal{D}_\infty^s$. Then~$U$ is non-negative on~$[0,T]$.
\end{prop}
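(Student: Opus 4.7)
The plan is a Stampacchia truncation argument combined with a continuity-in-time bootstrap. Since $s>d/2+2$, the solution $U$ lies in $\mathscr{C}^0(Q_T)$ with $\nabla U\in\L^\infty(Q_T)$, so by continuity of $U$ at $t=0$ (and since $U^0\geq 0$), the quantity
\[
T^\star := \sup\big\{t\in[0,T]\,:\,U(s,\cdot)\geq 0 \text{ component-wise for all }s\in[0,t]\big\}
\]
is strictly positive. The goal is to show $T^\star=T$ by contradiction. Assuming $T^\star<T$, continuity of $U$ together with smoothness of $D$ produces $\delta,\eta>0$ such that $U(t,x)\geq -\eta$ component-wise and, in particular, $D_{ii}(U(t,x))\geq \alpha/2$ for every $(i,t,x)\in\{1,\dots,N\}\times[T^\star,T^\star+\delta]\times\T^d$.

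On this slab, I use the sign-preserving decomposition to rewrite the $i$-th component of \eqref{eq:PtNL:reac} as
\[
\partial_t u_i -\sum_k\partial_k\big[D_{ii}(U)\partial_k u_i\big] -\sum_k\partial_k\Big[u_i\sum_{j}B_{ij}(U)\partial_k u_j\Big] = F_i + u_i\rho_i(U).
\]
Setting $v_i:=(u_i)_-$, the $\mathscr C^0_t \H^1_x$ regularity justifies the Stampacchia chain rule $\nabla v_i = -\mathbf 1_{\{u_i<0\}}\nabla u_i$ together with the pointwise identity $u_i\nabla v_i = -v_i\nabla v_i$. Testing the equation against $-v_i$ and integrating by parts on $\T^d$, the time term gives $\tfrac12 \tfrac{d}{dt}\|v_i\|_{\L^2}^2$; the diagonal diffusion produces $\int D_{ii}(U)|\nabla v_i|^2\geq \tfrac{\alpha}{2}\|\nabla v_i\|_{\L^2}^2$; the cross-diffusion term, after using $u_i\nabla v_i=-v_i\nabla v_i$, reduces to $\int v_i\nabla v_i\cdot \sum_j B_{ij}(U)\nabla u_j$, which Young's inequality bounds by $\tfrac{\alpha}{4}\|\nabla v_i\|_{\L^2}^2+C\|v_i\|_{\L^2}^2$ (using $\|\nabla U\|_\infty$ and $\|B(U)\|_\infty$ finite on $Q_T$); the force contribution $-\int v_i F_i$ is non-positive since $F\geq 0$; and the reaction contribution equals $\int v_i^2\rho_i(U)\mathbf 1_{\{u_i<0\}}$, bounded by $C\|v_i\|_{\L^2}^2$. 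Summing, $\tfrac{d}{dt}\|v_i\|_{\L^2}^2\leq C\|v_i\|_{\L^2}^2$ on $[T^\star,T^\star+\delta]$; since $v_i(T^\star)=0$ by continuity, Gr\"onwall's lemma forces $v_i\equiv 0$ there, contradicting the maximality of $T^\star$.

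The main obstacle is precisely what the sign-preserving assumption is built to overcome: the off-diagonal contribution $\sum_k\partial_k(u_i\sum_j B_{ij}(U)\partial_k u_j)$ would otherwise produce, after integration by parts, a term $\int v_i\nabla v_i\cdot(\textnormal{stuff involving }\nabla u_j)$ with no structural reason to be absorbable. It is precisely the presence of the prefactor $u_i=-v_i$ on $\{u_i<0\}$ that turns $u_i\nabla v_i$ into $-v_i\nabla v_i$, a quantity tamable by the diagonal dissipation via Young's inequality. The identical observation handles $R(U)=\textnormal{diag}(U)\rho(U)$: the $u_i$ prefactor converts a potentially sign-changing $\rho_i(U)$ into a manifestly $C\|v_i\|_{\L^2}^2$ contribution. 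The ancillary continuity-in-time argument ensuring $D\geq \alpha/2$ is unavoidable since Definition~\ref{def:nonneg} only supplies positivity of $D$ on the non-negative cone.
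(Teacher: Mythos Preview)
Your argument is the same Stampacchia truncation as the paper's, with two differences worth noting.

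First, the paper handles the cross-diffusion term by a second integration by parts rather than Young's inequality: writing $u_i^-\partial_k u_i^- = \tfrac12\partial_k(u_i^-)^2$ and integrating once more,
\[
-\sum_{k,j}\int_0^t\!\!\int_{\T^d} u_i^-\,\partial_k u_i^-\,b_{ij}(U)\,\partial_k u_j \;=\; \tfrac12\sum_{k,j}\int_0^t\!\!\int_{\T^d} (u_i^-)^2\,\partial_k\big[b_{ij}(U)\,\partial_k u_j\big]\,,
\]
which is bounded by $C\int_0^t\|u_i^-\|_2^2$ once $\partial_k[b_{ij}(U)\partial_k u_j]\in\L^\infty(Q_T)$. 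This is exactly where the full hypothesis $s>d/2+2$ is spent, whereas your Young route only consumes $\nabla U\in\L^\infty$ (that is, $s>d/2+1$). The trade-off: the paper's route needs no absorption into the diagonal dissipation and is therefore indifferent to the size of $\alpha$, while your Young step relies on $\alpha>0$ strictly. Since Definition~\ref{def:nonneg} allows $\alpha=0$, your argument as written does not cover that degenerate case; replacing the Young step by the second integration by parts above would fix this and also explain why the statement asks for $s>d/2+2$.

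Second, you frame the proof as a continuity-in-time bootstrap to secure $D_{ii}(U)\geq\alpha/2$ on the working slab, while the paper works directly on $[0,T]$ and simply uses $d_i(U)\geq 0$ before discarding the diagonal term. Your bootstrap is the more scrupulous treatment of the fact that Definition~\ref{def:nonneg} only controls $D$ on the non-negative cone. One minor correction: your opening claim that $T^\star>0$ follows ``by continuity of $U$ at $t=0$'' is not justified when $U^0$ vanishes somewhere --- continuity alone gives no such positivity. This is harmless, since the contradiction argument runs unchanged starting from $T^\star=0$.
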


Finally let us state the following  theorem, which is a consequence  of our main result and Proposition~\ref{prop:sign}, and will be applied to the SKT system below. Its proof can also be found in Section~\ref{proofprop:sign}.
\begin{thm}\label{thm:SKT}
Fix~$s>d/2$. Let $\rho:\R^N\rightarrow\R^N$ be a given smooth function and define~$R(U):=\textnormal{diag}(U)\rho(U)$. Consider a smooth sign-preserving matrix field $A$  satisfying $A(\R_{\geq 0}^N)\subset \mathscr{P}$, and  non-negative $(U^0,F)\in\mathcal{D}_\infty^s$. There exists $T>0$ and a unique element~$U$ of $E_T^s$ which solves the Cauchy problem \eqref{eq:PtNL:reac}. Moreover, we have the stability estimate \eqref{eq:stabNL} and points~$(ii)$ and $(iii)$ of Theorem~{\rm\ref{thm:lbu}};  point~$(i)$ holds if~$\rho$ vanishes identically.  Finally, $U$ is non-negative on its whole lifetime.
\end{thm}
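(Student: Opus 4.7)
The strategy is to extend $A$ to a smooth sign-preserving matrix field $\tilde A : \R^N \to \mathscr P$ that agrees with $A$ on the non-negative cone, apply Corollary~\ref{coro:reac} to the corresponding modified system, and finally show a posteriori that the solution remains non-negative, hence also solves the original system.

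To build $\tilde A$, pick a smooth non-decreasing $f : \R \to \R_{\geq 0}$ with $f(x) = x$ for $x \geq 0$ and $f(x) = 0$ for $x \leq -1$, and write $f(x) = x\,h(x)$ with $h(x) := \int_0^1 f'(\tau x)\,\dd\tau$ smooth. Extending $f$ component-wise produces a smooth map $F : \R^N \to \R_{\geq 0}^N$ that is the identity on $\R_{\geq 0}^N$, and one sets $\tilde A(U) := A(F(U))$. Then $\tilde A$ is smooth, $\tilde A(\R^N) \subset \mathscr P$ since $F(U) \in \R_{\geq 0}^N$, and $\tilde A = A$ on $\R_{\geq 0}^N$. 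Using the decomposition $A = D + \textnormal{diag}(\cdot)B$ from Definition~\ref{def:nonneg} gives
\[
\tilde A(U) = D(F(U)) + \textnormal{diag}(U)\,\tilde B(U), \qquad \tilde B_{ij}(U) := h(U_i)\,B_{ij}(F(U)),
\]
so $\tilde A$ remains sign-preserving with $\tilde D(U) := D(F(U)) \geq \alpha\,\textnormal{I}_N$ when $U \geq 0$. Corollary~\ref{coro:reac} applied to the $\tilde A$-system with reaction $R(U) = \textnormal{diag}(U)\rho(U)$ then provides a unique local solution $U \in E_T^s$ together with the stability estimate \eqref{eq:stabNL} and points~$(ii)$, $(iii)$ of Theorem~\ref{thm:lbu}; point~$(i)$ follows when $\rho \equiv 0$ since the reaction term vanishes.

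It remains to prove $U \geq 0$ throughout its lifetime. When $s > d/2+2$, Proposition~\ref{prop:sign} applies directly to the $\tilde A$-system. For the general case $s > d/2$ one approximates by spatial mollification $(U_n^0, F_n) := (U^0 \star \phi_{1/n},\, F \star \phi_{1/n})$ with a non-negative mollifier $\phi$: the sequence lies in $\mathcal D_\infty^{s'}$ for any $s' > d/2+2$, is non-negative, and converges to $(U^0,F)$ in $\mathcal D^s$ with $\|(U_n^0,F_n)\|_{\mathcal D^s} \leq \|(U^0,F)\|_{\mathcal D^s}$. The size-only dependence of the lifetime (point~$(ii)$ of Theorem~\ref{thm:lbu}), combined with propagation of regularity, supplies a common time $T > 0$ on which the associated solutions $U_n$ live in $E_T^{s'} \subset E_T^s$. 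Proposition~\ref{prop:sign} yields $U_n \geq 0$, and the stability estimate~\eqref{eq:stabNL} together with the embedding $E_T^s \hookrightarrow \mathscr C(Q_T)$ transfers non-negativity to $U$ on $[0,T]$. A continuation argument (restarting from any interior time $t_0$ with $U(t_0) \geq 0 $ and using that the set of $t$ where $U \geq 0$ is both open and closed in $[0, T_s^\star)$) extends this up to the full lifetime. Once $U \geq 0$, one has $\tilde A(U) = A(U)$, so $U$ solves the original system~\eqref{eq:PtNL:reac}; uniqueness within $E_T^s$ is inherited from uniqueness for the extended system, after verifying by the same approximation scheme that any $E_T^s$-solution of the original problem is itself non-negative and hence a solution of the $\tilde A$-system.

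The main obstacle is reconciling the regularity threshold $s > d/2+2$ of Proposition~\ref{prop:sign} with the hypothesis $s > d/2$ of Theorem~\ref{thm:SKT}, which is precisely why the size-only lifetime control from point~$(ii)$ of Theorem~\ref{thm:lbu} is essential: without it, the mollified solutions might not share a uniform interval of existence and the limiting non-negativity argument would collapse.
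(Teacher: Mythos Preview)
Your argument is correct and follows the same overall strategy as the paper: modify $A$ into a matrix field satisfying the Petrovskii condition on all of $\R^N$, invoke Corollary~\ref{coro:reac} on the modified system, bridge the regularity gap between the hypothesis $s>d/2$ and the threshold $s>d/2+2$ of Proposition~\ref{prop:sign} by approximating the data, and conclude that $U\geq 0$ so that the modification is invisible.

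The only substantive difference lies in how the extension is built. The paper invokes an abstract smooth ``almost retraction'' $h:\R^N\to A^{-1}(\mathscr P)$ (Proposition~\ref{prop:quadrant} in the appendix) and works with $A\circ h$. Your construction is more concrete: a component-wise smooth truncation $F:\R^N\to\R_{\geq 0}^N$ and $\tilde A:=A\circ F$. Your route is more elementary and has the pleasant side effect that the sign-preserving decomposition of $\tilde A$ is completely explicit; in particular $\tilde D(U)=D(F(U))\geq \alpha\,\textnormal I_N$ holds for \emph{all} $U$, not just $U\geq 0$, which is exactly what the energy computation in the proof of Proposition~\ref{prop:sign} actually uses (the term $\int d_i(U)|\nabla u_i^-|^2$ lives on the set $\{u_i<0\}$). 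The paper's assertion that $A\circ h$ is sign-preserving is less transparent in this respect, since the abstract $h$ need not factor as $h(U)_i=U_i\,g_i(U)$. On the other hand, the paper's retraction construction is more flexible and would adapt to situations where the ``good'' region is not the non-negative cone but some other closed set on which $A$ is Petrovskii.
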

\begin{rem}
As compared to Theorem~{\rm\ref{thm:exloc}} and Theorem~{\rm\ref{thm:lbu}}, Petrovskii's condition is here only required to be satisfied on the cone $\R^N_{\geq 0}$. 
\end{rem}
Our interest in this question originates from the study of the SKT model \cite{skt}. We end this paragraph by an example of use of Theorem~\ref{thm:SKT}
  on this specific system. In its original form, the SKT model writes
\begin{align}
\label{eq:SKT}
  \left\{
  \begin{aligned}
    \partial_t u_1- \Delta[(d_1+a_{11}u_1+a_{12}u_2)u_1] = u_1(r_1-s_{11}u_1-s_{12}u_2)\, ,\\
    \partial_t u_2- \Delta[(d_2+a_{21}u_1+a_{22}u_2)u_2] = u_2(r_2-s_{21}u_1-s_{22}u_2)\,,
  \end{aligned}
  \right.                                
\end{align}
where the unknowns $u_1,u_2:\R_{\geq 0}\times\T^d\rightarrow\R_{\geq 0}$ are density population and all the coefficients~$a_{ij}$, $r_i$, $s_{ij}$ are nonnegative while the $d_i$'s are positive. Such a system can be written in the form \eqref{eq:PtNL:reac} for $U:=\transp{}{(u_1,u_2)}$,
\begin{align*}
  R(U) = \begin{pmatrix}
    u_1 & 0 \\
    0 & u_2
  \end{pmatrix}\begin{pmatrix}
    r_1-s_{11}u_1-s_{12}u_2\\
    r_2-s_{21}u_1-s_{22}u_2
   \end{pmatrix}
\end{align*}
and the matrix field
\begin{align}\label{def:ASKT}
  A_{\textnormal{SKT}}:\begin{pmatrix} u_1\\u_2\end{pmatrix} \longmapsto \begin{pmatrix} d_1+2a_{11}u_1+a_{12}u_2 &  a_{12}u_1\\
    a_{21}u_2 & d_2+ a_{21}u_1+2 a_{22}u_2
  \end{pmatrix}.
\end{align}
Writing \begin{align*}
        A_{\textnormal{SKT}}(U)= \begin{pmatrix} d_1+a_{12}u_2 & 0\\
    0 & d_2+ a_{21}u_1
    \end{pmatrix} +\begin{pmatrix} 2a_{11}u_1 &  a_{12}u_1\\
    a_{21}u_2 & 2 a_{22}u_2
    \end{pmatrix},
\end{align*}
we see that this matrix field is indeed sign-preserving in the sense of Definition~\ref{def:nonneg}. Lastly, we have that $A_{\textnormal{SKT}}(\R_{\geq 0}\times\R_{\geq 0})\subset\mathscr{P}$ : it can be readily checked that $\det A_{\textnormal{SKT}}(u_1,u_2)$ and $\textnormal{Tr}\, A_{\textnormal{SKT}}(u_1,u_2)$ are both positive for $u_1,u_2\geq 0$ (because the $d_i$'s are positive), so either the eigenvalues are not real and share a positive real part, or they are both real and have the same (positive) sign. Theorem~\ref{thm:SKT} therefore applies to produce local strong and non-negative solutions to the SKT system.
Let us however note that 
$$\begin{aligned}
\det(A_{\textnormal{SKT}}+\transp{}A_{\textnormal{SKT}})(u_1,u_2) = (d_1+2a_{11}u_1+a_{12}u_2)(d_2+a_{21}u_1+2a_{22}u_2)\\
-(a_{12} u_1+a_{21}u_2)^2
\end{aligned}
$$
may become negative on $\R_{\geq 0}\times \R_{\geq 0}$. For instance for $a_{11}=a_{22}=0$, this expression becomes negative on the two fundamental axes, far from the origin (and therefore also near those axes). This simple example explains why Petrovskii's condition is indeed crucial for the study of parabolic systems and was already pointed out by Amann \cite{amann}.   {Surprisingly enough, for generalizations of the SKT model to the case of multiple populations (more than two species), it has been noticed only recently that a similar analysis can be carried on (see \cite[Section 7]{chenjun2021}) : for an arbitrary number $N$ of population species, the corresponding generalization of the SKT model (see \cite{chenjun2021}) satisfies $A_{\textnormal{SKT}}(\R_{\geq 0}^N)\subset\mathscr{P}$ and our Theorem~\ref{thm:SKT} applies to this setting as well.}

\subsection{Notations}\label{subsec:not}
In the following we denote~${\mathbb P}_0  := \mbox{Id}-\langle \cdot\rangle$ the orthogonal projection  from~$\L^2(\T^d)$  onto mean free functions. For $T>0$, we note $Q_T$ the (periodic) closed cylinder~$Q_T:=[0,T]\times\T^d$. For $1\leq p \leq \infty$ the $\L^p(\T^d)$ and $\L^p(Q_T)$ norms will be noted~$\|\cdot\|_p$ (if there is no ambiguity), while we will generally use $\|\cdot\|_X$ for the norm of some functional space $X$. We shall sometimes use the shorthand notation~$\L^p_T$ for~$\L^p(0,T)$.

  \medskip

  For any real number $s$ we   recall that~$X_T^s$ is the space $\mathscr{C}^0([0,T];\H^s(\T^d))$ and~$Y_T^s$ is the space $\L^2(0,T;\H^{s}(\T^d))$; we then define the energy space $E_T^s:=X_T^s \cap Y_T^{s+1}$ that we equip with the norm $V\mapsto (\|V\|_{X_T^s}^2 + \|\nabla V\|_{Y_T^s}^2)^{1/2}$.

  \medskip
  
For $T>0$, $\alpha\in[0,1]$ and $k\in\N$  we denote by $\mathscr{C}^{k,\alpha}(Q_T)$ the space of $k$ times continuously differentiable functions, whose partial derivatives of order $k$ are $\alpha$-Hölder continuous and we denote by~$\|\cdot\|_{\mathscr{C}^{k,\alpha}(Q_T)}$ the corresponding norm. We simply note $\mathscr{C}^k(Q_T)$ when~$\alpha=0$ and sometimes specify the set of values $X$ writing $\mathscr{C}^{k,\alpha}(Q_T;X)$.

\medskip
  
  We  fix a norm $|\cdot|$ on $\mathbb{C}^N$ and the subordinate norm $\vertiii{\cdot}$ on $\textnormal{M}_N(\mathbb{C})$. For a continuous matrix field $M\in\mathscr{C}^0(Q_T;\mathscr{P})$, $\|M\|_\infty$ will refer to the uniform norm of $M$ (with $\vertiii{\cdot}$ at arrival). For such a matrix field $M$, there exists $\eta(M)>0$ such that $M\in\mathscr{C}^0(Q_T;\mathscr{P}_{\eta(M)})$.  We refer to Appendix Section~\ref{appendix:petrovskii} for the definition and properties of this function $\eta$.
For~$\alpha\in[0,1]$ and a matrix field $M\in\mathscr{C}^{0,\alpha}(Q_T;\mathscr{P})$, we will use repeatedly the following notation 
\begin{equation}\label{defnormalpha}
[M]_{\alpha} := \|M\|_{\mathscr{C}^{0,\alpha}(Q_T)}+\eta(M)^{-1} \, .
\end{equation}
  Finally if~$C_1,\dots,C_n$ is a collection of positive numbers, we   write $$A \lesssim_{C_1,\dots,C_n} B$$ 
if there is an increasing function~$g$   such that
$$
A \leq g(C_1+\dots+C_n) B \, .
$$
 Such a function does not depend on any other relevant variable and it is  liable to change from line to line. We will in general not track it.
\subsection{Main results in the linear setting}
Theorem~\ref{thm:exloc} will be obtained thanks to a detailed study of the linear setting, that is  of  system~(\ref{eq:Pt})    where $M$ is a \emph{given} matrix field. We collect in this paragraph some results which will be useful in the proof of Theorem~\ref{thm:exloc} and Theorem~\ref{thm:lbu} and, even though focusing on the linear setting, are interesting for their own sake. We will often use the notation $L_M$ for the linear differential operator applied to $V$ in the left-hand side of \eqref{eq:Pt}:
   $$
L_M V:=    \partial_t  V - \sum_{k=1}^d \partial_k\big[M\partial_k V  \big] \, .  
   $$
    Well-posedness for~\eqref{eq:Pt} will be established under adequate assumptions on $M$ and thanks to the following \emph{a priori} estimates  in the $\H^s(\T^d)$ setting for $s>d/2$. As will be shown later, one can assume without loss of generality that the functions under study are mean free.
\begin{thm} \label{thm:PetrovskiiHs}
    Let~$T>0$, $\alpha\in(0,1]$, $s>d/2$ and consider a matrix field $M\in\mathscr{C}^{0,\alpha}(Q_T;\mathscr{P})$ which belongs furthermore to $Y_T^{s+1}$. For any $V$ in~$E_T^s$ such that $V(0)\in\H^s(\T^d)$, $L_{M }V\in Y_T^{s-1}$ and $\langle V(t)\rangle=0$ for all $t\in[0,T]$, {one has actually} that $V$ belongs to $E_T^s\cap\mathscr{C}^{0,\alpha_s}(Q_T)$ for some $\alpha_s\in(0,1)$ which depends only on $s$ and
\begin{align}\label{thm2Hsestimate}  
\|  V \|_{E_T^s} + \|V\|_{\mathscr{C}^{0,\alpha_s}(Q_T)} \lesssim_{T,[M]_{\alpha},\|M\|_{Y_T^{s+1}}}  \|V^0\|_{\H^s(\T^d)} +  \|L_{M }V\|_{Y_T^{s-1}}\, .
\end{align}
\end{thm}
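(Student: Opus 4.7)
The plan is to upgrade the $\L^2$-type a priori bound of Theorem~\ref{thm:PetrovskiiL2} to the $\H^s$ scale via a Littlewood--Paley dyadic decomposition, using the extra regularity $M\in Y_T^{s+1}$ solely to absorb the commutators that arise when the frequency cut-off is commuted with multiplication by $M$. Working first \emph{a priori} under the assumption $V\in E_T^s$, I would close the estimate scale by scale and then justify it by a standard regularisation.

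\textbf{Step 1: frequency localisation and scalewise $\L^2$ estimate.} Since $\Delta_j$ commutes with $\partial_t$ and $\partial_k$ but not with $M$, the block $V_j:=\Delta_j V$ (which is mean-free because $V$ is) satisfies
\begin{equation*}
L_M V_j = \Delta_j F + \sum_{k=1}^d \partial_k\bigl[\Delta_j,M\bigr]\partial_k V.
\end{equation*}
Applying Theorem~\ref{thm:PetrovskiiL2} to this linear equation yields, for every $j$,
\begin{equation*}
\|V_j\|_{E_T^0}\lesssim_{T,[M]_{\alpha}} \|\Delta_j V^0\|_{2}+\|\Delta_j F\|_{Y_T^{-1}}+\sum_{k=1}^d\|[\Delta_j,M]\partial_k V\|_{Y_T^{0}}.
\end{equation*}
For the commutator, note that $[\Delta_j,M]=[\Delta_j,M-c]$ for any constant $c\in\mathrm{M}_N(\R)$; a Bony paraproduct analysis of $(M-c)\partial_k V$ yields a pointwise-in-time Kato--Ponce type bound
\begin{equation*}
2^{js}\|[\Delta_j,M]\partial_k V(t)\|_2 \leq c_j(t)\,\|M(t)-c\|_{\H^{s+1}}\,\|V(t)\|_{\H^s},
\end{equation*}
with $(c_j(t))_j$ square-summable uniformly in $t$ (crucially using $s>d/2$). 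Minimising over $c$, squaring, weighting the previous inequality by $2^{2js}$, summing in $j$ and integrating in time delivers
\begin{equation*}
\|V\|_{E_T^s}^2\lesssim_{T,[M]_{\alpha}} \|V^0\|_{\H^s}^2+\|L_MV\|_{Y_T^{s-1}}^2+\int_0^T \mathrm{d}_{\H^{s+1}}(M(t),\R)^2\,\|V(t)\|_{\H^s}^2\,\dd t,
\end{equation*}
and a Grönwall argument applied to $t\mapsto\|V\|_{E_t^s}^2$ closes the estimate, with constant depending only on $T$, $[M]_\alpha$ and $\mathrm{d}_{Y_T^{s+1}}(M,\R)$.

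\textbf{Step 2: Hölder regularity.} Once $V\in E_T^s$ is established, the embedding $\H^s(\T^d)\hookrightarrow \mathscr{C}^{0,\beta}(\T^d)$ with $\beta=\min(s-d/2,1/2)$ gives uniform-in-time spatial Hölder regularity. For time regularity, the equation reads $\partial_t V=\sum_k\partial_k(M\partial_k V)+F$; since $\H^s$ is an algebra for $s>d/2$ and $M\in Y_T^{s+1}$, the right-hand side belongs to $Y_T^{s-1}$. Interpolating $V\in X_T^s$ with $\partial_t V\in Y_T^{s-1}$ yields Hölder-in-time values in $\H^{s-1}$, and combining the two embeddings produces $V\in\mathscr{C}^{0,\alpha_s}(Q_T)$ for some $\alpha_s\in(0,1)$.

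\textbf{Step 3: justification and main obstacle.} The calculation above is formal in that it assumes $V\in E_T^s$, whereas we only know $V\in E_T^0$. I would mollify the triple $(V^0,F,M)$, solve the resulting smooth problems (whose solutions $V^\varepsilon$ automatically lie in $E_T^s$ by classical parabolic regularity once $M^\varepsilon$ is smooth), apply the closed estimate uniformly in $\varepsilon$, and pass to the limit by weak compactness in $E_T^s$; uniqueness at the limiting level is guaranteed by Theorem~\ref{thm:PetrovskiiL2} since both the weak limit and $V$ lie in $E_T^0$ with the same data. The main obstacle is the commutator estimate of Step~1: it is essential to extract the factor $\mathrm{d}_{Y_T^{s+1}}(M,\R)$ rather than the full $\|M\|_{Y_T^{s+1}}$ (using $[\Delta_j,M]=[\Delta_j,M-c]$), which correctly reflects that a constant---though possibly very large---matrix $M$ generates no commutator and hence no extra cost in the $\H^s$ bound beyond the $\L^2$ one.
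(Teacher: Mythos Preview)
Your proposal is correct and follows essentially the same route as the paper: localise with $\Delta_j$, apply Theorem~\ref{thm:PetrovskiiL2} to each block, control the commutator $[\Delta_j,M]=[\Delta_j,M-c]$ via Bony's paraproduct decomposition (this is exactly how the paper extracts the dependence on $\dd_{Y_T^{s+1}}(M,\R)$ rather than on $\|M\|_{Y_T^{s+1}}$), sum and close by Gr\"onwall, and finally read off the H\"older regularity from $V\in X_T^s$ and $\partial_t V\in Y_T^{s-1}$ (the paper packages this as Lemma~\ref{lem:sobo2}).

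The only noteworthy difference is in the justification of the a priori computation. You mollify $(V^0,F,M)$, solve the regularised problems, and pass to the limit; the paper instead works directly with the given $V\in E_T^0$, sums the scalewise estimate only up to a finite frequency $J$ so that $S_JV$ (which automatically lies in every $E_T^\sigma$) appears on both sides, applies Gr\"onwall at that level, and lets $J\to\infty$. The paper's device is more self-contained since it avoids invoking an external existence theory for the mollified problems---a point worth noting because at this stage of the paper the linear existence result (Theorem~\ref{thm:exlin}) has not yet been proved. Your approach is nonetheless perfectly standard and valid, provided you check that the mollified $M^\varepsilon$ stays Petrovskii-valued with $[M^\varepsilon]_\alpha$ controlled by $[M]_\alpha$ (this is handled in the paper in Section~\ref{subsec:redlip} and Corollary~\ref{coro:etahold}).
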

 

\begin{rem}\label{rem:no T dependence} The proof Theorem~{\rm\ref{thm:PetrovskiiHs}} may easily be adapted to the case when~$T = \infty$, provided the matrix field~$M$  converges as time goes to infinity towards a stationary matrix field~$\overline M \in  {\mathscr{C}^0}(\T^d;\mathscr{P})$.
\end{rem}
\noindent  Building on this \emph{a priori} estimate, we have the following wellposedness result for the  Cauchy problem~(\ref{eq:Pt}).
\begin{thm}\label{thm:exlin}
  Fix $T>0$, $s>d/2$ and $\alpha\in(0,1]$. For $M\in\mathscr{C}^{0,\alpha}(Q_T;\mathscr{P})\cap Y_T^{s+1}$ and~$(V^0,F) \in \mathcal D_T^s$, the  Cauchy problem~{\rm(\ref{eq:Pt})} is well posed $E_T^s$ and its solution belongs furthermore to $\mathscr{C}^{0,\alpha_s}(Q_T)$, for some $\alpha_s\in(0,1)$.
\end{thm}
 \subsection{Plan of the paper}
In the coming Section~\ref{sec:estimlin} we prove Theorem~\ref{thm:PetrovskiiHs}, which concern \emph{a priori} estimates. This will lead, in Section~\ref{sec:exlin}, to the proof of the linear  wellposedness Theorem~\ref{thm:exlin}. The proof of the nonlinear  wellposedness Theorem~\ref{thm:exloc}
is provided in Section~\ref{sec:road} while Theorem~\ref{thm:lbu} is proved in Section~\ref{prooflbu}. Finally Theorem~\ref{thm:SKT}  is proved in Section~\ref{proofprop:sign}.
Section~\ref{end-point Besov} is devoted to the end-point case in Besov spaces (see Theorem~\ref{thm:exlocbesov}). 
Four appendixes are devoted to some classical results on Sobolev spaces, to basics of Littlewood-Paley theory,  to important facts related to the Petrovskii condition, and to a technical but useful retraction result, of $\R^N$ on $\R_{\geq 0}^N$, respectively.

\section{Estimates in the linear case}\label{sec:estimlin}

We start by studying the case of a matrix field independent of the space variable (see Paragraph~\ref{sct:homogeneous}), first in the constant coefficient case (Proposition~\ref{prop:systlin}), and then in the time-dependent case  (Corollary~\ref{coro:systlinbis}).  We explain then in Paragraph~\ref{subsec:redlip} how the proof of Theorem~\ref{thm:PetrovskiiHs} can be reduced to a simpler result (Lemma~\ref{lem:goalL2:weak}) and then prove this lemma in Subsection~\ref{subsec:proof_lemma}. 

      \subsection{The  case of a constant matrix field}
In this paragraph we treat the simplest case in which the matrix field is constant. Well-posedness is obtained in $E_T^s$ for $s\geq 0$ together with an estimate.
 \begin{prop}\label{prop:systlin}
Fix $\delta>0$ and $B\in\mathscr{P}_\delta$, as well as~$s\in \R$ and $T>0$. For $(V^0,F)\in \mathcal D_T^s$ having both vanishing spatial mean, the Cauchy problem
$$  \left\{
  \begin{aligned}
    \partial_t V-B \Delta V &= F \, ,\\
   V_{|t=0} &= V^0 \, ,
  \end{aligned}
  \right.                                
$$
is well posed in the energy space~$E_{T}^s$ with the following energy estimate
 \begin{align}
   \label{eq:boundlinHs}
   \|V\|_{X_{T}^s}^2 + \delta \|\nabla V\|_{Y_{T}^s}^2 \leq \textnormal{C}_{B,\delta} \Big( \|  V^0\|_{\H^s(\T^d)}^2 + \frac{1}{\delta} \|F\|_{Y_{T}^{s-1}}^2\Big)\, ,
\end{align}
with  \begin{equation}
\label{eq:CAdelta}  \C_{B,\delta} := a_N \Big(1+\frac {\vertiii{B }} \delta\Big)^N \, ,
\end{equation}
and $a_N$ a constant depending only on the dimension $N$ of the system.
\end{prop}
\begin{proof}

The Schur decomposition ensures that there is a unitary matrix~$\mathcal U\in\textnormal{U}_N(\mathbb{C})$  such that~$T:= \mathcal U B \mathcal U^\star$ is upper triangular with diagonal terms~$d_1,\dots,d_N  $ of real part larger than~$ \delta$ and super-diagonal terms denoted~$r_{i,j}$ (and~$R$ is the corresponding matrix).  
Let us set~$\tilde V:= \mathcal U^\star V$ and~$\tilde F:= \mathcal U^\star F$. Then 
$$
   \partial_t   \tilde V-T \Delta  \tilde V  =  \tilde F \, ,
$$
and since~$T$ is upper triangular in particular the last component satisfies 
$$
   \partial_t   \tilde V_N -d_N  \Delta  \tilde V_N  =  \tilde F_N  \, .
$$
An energy estimate in~$\H^s(\T^d)$ on this equation provides directly
$$
\frac12\frac d {dt}\|   \tilde V_N(t)\|_{\H^s(\T^d)} ^2+ \mbox{Re} \, d_N \|\nabla  \tilde V_N\|_{\H^s(\T^d)}^2 \leq     \| \tilde F_N\|_{\H_{T}^{s-1}}\|\nabla \tilde V_N\|_{\H^s(\T^d)}\, , 
$$
hence
  in particular 
$$
\|   \tilde V_N\|_{X_{T}^s}^2 + \delta \|\nabla  \tilde V_N\|_{Y_{T}^s}^2 \lesssim      \|   \tilde V_N(0)\|^2_{\H^s(\T^d)}    + \frac{1}{\delta} \| \tilde F_N\|_{Y_{T}^{s-1}}^2  \, .
$$
Now  we argue by iteration: there holds
$$
   \partial_t   \tilde V_{N-1} -d_{N-1}  \Delta  \tilde V_{N-1} =  \tilde F_{N-1} + r_{N-1,N}\Delta   \tilde V_{N }  $$
   so
   again
   $$
   \begin{aligned}
   \frac12\frac d {dt}\|   \tilde V_{N-1}(t)\|_{\H^s(\T^d)}^2 +\delta \|\nabla  \tilde V_{N-1}\|_{\H^s(\T^d)}^2 &\leq     \| \tilde F_{N-1}\|_{\H_{T}^{s-1}}\|\nabla \tilde V_{N-1}\|_{\H^s(\T^d)}\\
   &\quad  +\vertiii{R} \|\nabla  \tilde V_{N-1}\|_{\H^s(\T^d)} \|\nabla  \tilde V_{N}\|_{\H^s(\T^d)}\, ,
   \end{aligned}
  $$
 and finally 
$$
\begin{aligned}
 \|   \tilde V_{N-1}(t)\|_{X_{T}^s}^2 + \delta  \|   \tilde V_{N-1}\|_{Y_{T}^s}^2 \lesssim     \|   \tilde V_{N-1}(0)\|^2_{\H^s(\T^d)}  + \frac{1}{\delta} \| \tilde F_{N-1}\|_{Y_{T}^{s-1}}^2 \\
   +  \frac {\vertiii{R} ^2}{\delta} \|\nabla  \tilde V_N\|_{Y_{T}^s}^2 \, .
\end{aligned}
$$
It follows that
$$
\begin{aligned}
 \|   \tilde V_{N-1}\|_{X_{T}^s}^2 + \delta  \|  \nabla \tilde V_{N-1}\|_{Y_{T}^s}^2 \lesssim  \|   \tilde V_{N-1}(0)\|^2_{\H^s(\T^d)} +  \frac {\vertiii{R} ^2}{\delta^2}\|   \tilde V_{N}(0)\|^2_{\H^s(\T^d)}  \\
 +  \frac{1}{\delta} \| \tilde F_{N-1}\|_{Y_{T}^{s-1}}^2 +  \frac {\vertiii{R} ^2}{\delta^3}\| \tilde F_{N}\|_{Y_{T}^{s-1}}^2 \, .
\end{aligned}
$$
Arguing similarly at each step gives finally
$$
\|   \tilde V \|_{X_{T}^s}^2  +\delta  \| \nabla   \tilde V \|_{Y_{T}^s}^2 \leq  \Big(1+ \frac{\vertiii{R} }{\delta}\Big)^N\Big( \|   \tilde V (0)\|_{\H^s(\T^d)} + \frac1\delta  \| \tilde F \|_{Y_{T}^{s-1}}^2 \Big) \, .
$$ 
Estimate \eqref{eq:boundlinHs} is proved.
\end{proof}
\subsection{The case of a homogeneous in space matrix field}\label{sct:homogeneous} 
In this paragraph we focus on the case when $M$ does not depend on the space variable but may depend on   time: using Proposition~\ref{prop:systlin}, we can actually indeed recover a similar result  for a class of non autonomous systems.  \begin{coro}\label{coro:systlinbis}
  Let $s\in\R$, $\alpha>0$ and $M\in {\mathscr{C}^{0,\alpha}}([0,T] ;\mathscr{P})$. For any $V\in E_T^s$ having vanishing spatial mean at all times and such that~$L_{M }V\in Y_T^{s-1}$  there holds 
\begin{align}\label{eq:estimate non autonomous}
  \|V\|_{E_T^s} \lesssim_{T,[M]_\alpha} \|V (0)\|_{\H^s(\T^d)} + \|L_{M }V\|_{Y_T^{s-1}}\, .
\end{align}
 \end{coro}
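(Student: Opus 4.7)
The strategy is to reduce to the constant-coefficient case of Proposition~\ref{prop:systlin} by freezing the coefficient $M$ on each piece of a fine time partition. Since $M$ is space-independent, the equation simplifies to
$$\partial_t V - M(t)\Delta V = L_M V,$$
and $\langle L_M V\rangle = 0$, $\langle\Delta V\rangle = 0$ on $\T^d$, so every quantity encountered below will have vanishing spatial mean. Fix $h \in (0,T]$ to be chosen later and partition $[0,T]$ into subintervals $I_i := [t_i, t_{i+1}]$ of length at most $h$. On each $I_i$, set $B_i := M(t_i)\in\mathscr{P}_{\eta(M)}$ and rewrite the equation as
$$\partial_t V - B_i \Delta V \;=\; L_M V + (M(t) - B_i)\Delta V \;=: F_i.$$
Proposition~\ref{prop:systlin} applied on $I_i$ (with initial time $t_i$ and source $F_i$) gives
$$\|V\|_{E^0(I_i)}^2 \;\leq\; C_1\,\|V(t_i)\|_2^2 + \frac{C_1}{\eta(M)}\,\|F_i\|_{Y^{-1}(I_i)}^2,$$
where $C_1 = \C_{B_i,\eta(M)}$ is controlled in terms of $[M]_\alpha$ since $\vertiii{B_i}\leq \|M\|_\infty\leq [M]_\alpha$ and $\eta(M)^{-1}\leq [M]_\alpha$.

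The perturbation is handled by Hölder continuity: for $t\in I_i$,
$$\|(M(t)-B_i)\Delta V\|_{\H^{-1}} \;\leq\; \|M(t)-B_i\|_\infty\,\|\Delta V\|_{\H^{-1}} \;\lesssim\; [M]_\alpha\, h^\alpha\,\|\nabla V(t)\|_{2},$$
so that $\|(M-B_i)\Delta V\|_{Y^{-1}(I_i)}^2 \lesssim [M]_\alpha^2\, h^{2\alpha}\,\|V\|_{E^0(I_i)}^2$. Choosing $h$ small enough, depending only on $[M]_\alpha$, so that $C_1 [M]_\alpha^2 h^{2\alpha}/\eta(M) \leq 1/4$, this term can be absorbed on the left-hand side, leaving
$$\|V\|_{E^0(I_i)}^2 \;\leq\; 2C_1\,\|V(t_i)\|_2^2 + \frac{2C_1}{\eta(M)}\,\|L_M V\|_{Y^{-1}(I_i)}^2.$$

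The last step is the iteration, which is the main (though standard) obstacle since it requires tracking how constants propagate. The embedding $\|V(t_{i+1})\|_2 \leq \|V\|_{X^0(I_i)} \leq \|V\|_{E^0(I_i)}$ allows a direct induction bounding $\|V(t_i)\|_2^2$ by $(2C_1)^i\|V(0)\|_2^2$ plus a weighted sum of the local source contributions. Summing the local estimates over the $\lceil T/h\rceil$ intervals then yields
$$\|V\|_{E_T^0}^2 \;\leq\; (2C_1)^{\lceil T/h\rceil}\Big(\|V(0)\|_2^2 + \tfrac{1}{\eta(M)}\|L_M V\|_{Y_T^{-1}}^2\Big).$$
Since both $h$ and $C_1$ are controlled by $[M]_\alpha$, the resulting constant is bounded by an increasing function of $T+[M]_\alpha$, which is exactly the content of the notation $\lesssim_{T,[M]_\alpha}$. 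The only real delicacy is to verify that every dependence on $M$ in the freezing, the perturbation, and the iteration count collapses into the single quantity $[M]_\alpha$, which follows immediately from the definition \eqref{defnormalpha}.
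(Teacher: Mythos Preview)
Your proof is correct and follows essentially the same approach as the paper's: partition $[0,T]$ into short subintervals, freeze $M$ at the left endpoint on each piece, apply Proposition~\ref{prop:systlin}, absorb the freezing error via the H\"older bound $\vertiii{M(t)-M(t_i)}\lesssim [M]_\alpha h^\alpha$, and iterate. The only differences are cosmetic (the paper parametrizes by the number $\kappa$ of pieces rather than their length $h$, and tracks the iteration constant slightly more explicitly), so there is nothing substantive to add.
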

\begin{proof}
  We consider a subdivision $t_0=0<t_1<\cdots < t_\kappa = T$ of $[0,T]$, such that each subinterval has size smaller than~$T/\kappa$ with~$\kappa$ to be determined. Using the notation introduced in Corollary~\ref{coro:thetapet}, we see that each matrix $M(t_i)$ belongs to $\mathscr{P}_{\eta(M)}$. So writing
\[\partial_t V - M(t_i)\Delta V =  L_{M }V + \big(M-M(t_i)\big)\Delta V\, ,\]
  we get from Proposition~\ref{prop:systlin} for $t\in[t_i,t_{i+1}]$, shifting the initial time to $t_i$ 
\begin{multline*}
  \|V\|_{\L^\infty([t_i,t];\H^s(\T^d))}^2  + \eta(M)\int_{t_i}^{t} \|\nabla V(t')\|_{\H^s(\T^d)}^2\,\dd t'\leq \textnormal{C}_{M(t_i),\eta(M)} \|V(t_i)\|_{\H^s(\T^d)}^2  \\ + \frac{\textnormal{C}_{M(t_i),\eta(M)}}{\eta(M)} \int_{t_i}^t\|L_{M }V(t')\|_{\H^{s-1}(\T^d)}^2\,\dd t' \\+ \frac{\textnormal{C}_{M(t_i),\eta(M)}}{\eta(M)} \int_{t_i}^t \vertiii{M(t')-M(t_i)}^2 \|\nabla V(t')\|_{\H^s(\T^d)}^2\,\dd t'.
\end{multline*}
Now, returning to the definition \eqref{eq:CAdelta} of $\textnormal{C}_{B,\delta}$ and  recalling notation~(\ref{defnormalpha}), we can rewrite the previous inequality as
$$\begin{aligned}
  \|V\|_{\L^\infty([t_i,t];\H^s(\T^d))}^2 & + \int_{t_i}^{t} \|\nabla V(t')\|_{\H^s(\T^d)}^2\,\dd t'  \lesssim_{[M]_\alpha} \|V(t_i)\|_{\H^s(\T^d)}^2\\
  &\quad  + \int_{t_i}^t\|L_{M }V(t')\|_{\H^{s-1}(\T^d)}^2\,\dd t' \\
  & \qquad + \int_{t_i}^t \vertiii{M(t')-M(t_i)}^2 \|\nabla V(t')\|_{\H^s(\T^d)}^2\,\dd t' \, .
\end{aligned}$$
We recall that~$\lesssim_{[M]_\alpha}$ stands for multiplication by~$g([M]_\alpha)$ with~$g$ increasing.
  If we choose~$ \kappa$ large enough so that
\begin{align}\label{ineq:L}
\left(\frac{T}\kappa\right)^{2\alpha} \|M\|_{\mathscr{C}^{0,\alpha}([0,T])}^2 < \frac{1}{2g([M]_\alpha)} \, ,
\end{align}
then recalling that~$|t_i-t_{i+1 }| \leq  T/\kappa $ for all $i\in\llbracket 0,\kappa-1\rrbracket$, we have in particular 
\begin{align*}
\forall  i\in\llbracket 0,\kappa-1\rrbracket \, , \quad \sup_{t \in [t_i,t_{i+1}]} \vertiii{M(t)-M(t_i)}^2 \leq \frac{1}{2g([M]_\alpha)} \, \cdotp
\end{align*}
The inequality on $[t_i,t_{i+1}]$ becomes
\begin{multline*}
    \|V\|_{\L^\infty([t_i,t];\H^s(\T^d))}^2  + \frac12 \int_{t_i}^{t} \|\nabla V(t')\|_{\H^s(\T^d)}^2\,\dd t'\\\lesssim_{[M]_\alpha} \|V(t_i)\|_{\H^s(\T^d)}^2 + \int_{t_i}^t\|L_{M }V(t')\|_{\H^{s-1}(\T^d)}^2\,\dd t' \, .
\end{multline*}
Summing this estimate with $i=0$ and $t=t_1$ with the one for $i=1$ and $t\in[t_1,t_2]$ we get in particular
\begin{multline*}
  \|V(t_1)\|_{\H^s(\T^d)}^2 +   \|V\|_{\L^\infty([t_1,t];\H^s(\T^d))}^2  + \frac12 \int_{0}^{t} \|\nabla V(t')\|_{\H^s(\T^d)}^2\,\dd t'\\\lesssim_{[M]_\alpha} \|V(0)\|_{\H^s(\T^d)}^2 + \|V(t_1)\|_{\H^s(\T^d)}^2 +  \int_{0}^t\|L_{M }V(t')\|_{\H^{s-1}(\T^d)}^2\,\dd t' \, .
\end{multline*}
Recalling that  $\lesssim_{[M]_\alpha}$ stands for multiplication by~$\textnormal{K}_M:=g([M]_\alpha)$, we have therefore 
\begin{multline*}
\|V\|_{\L^\infty([t_1,t];\H^s(\T^d))}^2 + \frac12 \int_{0}^{t} \|\nabla V(t')\|_{\H^s(\T^d)}^2\,\dd t'\\\leq \textnormal{K}_{M}\|V(0)\|_{\H^s(\T^d)}^2 + (\textnormal{K}_{M}-1) \|V(t_1)\|_{\H^s(\T^d)}^2 + \textnormal{K}_{M} \int_{0}^t\|L_{M }V(t')\|_{\H^{s-1}(\T^d)}^2\,\dd t',
\end{multline*}
which implies eventually on $[0,t_2]$ 
\begin{multline*}
\|V\|_{\L^\infty([0,t];\H^s(\T^d))}^2 + \frac12 \int_{0}^{t} \|\nabla V(t')\|_{\H^s(\T^d)}^2\,\dd t'\\\leq \textnormal{K}_{M}^2\|V(0)\|_{\H^s(\T^d)}^2 + \textnormal{K}_{M} \int_{0}^t\|L_{M }V(t')\|_{\H^{s-1}(\T^d)}^2\,\dd t'.
\end{multline*}
Iterating, we recover on $[0,t_\kappa] = [0,T]$
$$
\begin{aligned}
\|V\|_{X_T^s}^2 + \frac12 \int_{0}^{T} \|\nabla V(t')\|_{\H^s(\T^d)}^2\,\dd t' & \leq \textnormal{K}_{M}^\kappa\|V(0)\|_{\H^s(\T^d)}^2\\
&\qquad  + \textnormal{K}_{M} \int_{0}^T\|L_{M }V(t')\|_{\H^{s-1}(\T^d)}^2\,\dd t'.
\end{aligned}
$$
The proof is over once noticed that the condition \eqref{ineq:L} required on $\kappa$ can indeed be satisfied choosing $\kappa=\widetilde g(T + [M]_\alpha)$ with $\widetilde g$ some increasing function. 
\end{proof}

\subsection{Reduction of Theorem~\ref{thm:PetrovskiiHs} to a single lemma}\label{subsec:redlip}
In this subsection we explain how the estimate of Theorem~\ref{thm:PetrovskiiHs} can be recovered by the following (seemingly) weaker result.
\begin{lem}\label{lem:goalL2:weak}
Fix $T>0$, $s>d/2$, $\alpha\in(0,1)$ and $M\in\mathscr{C}^{0,\alpha}(Q_T;\mathscr{P})\cap Y_T^{s+1}$ a matrix field which is furthermore assumed to belong to $X_T^{s+1}$. For any $V\in E_T^s$ such that  $L_M V\in Y_T^{s-1}$ and $\langle V(t)\rangle = 0$ for all $t\in[0,T]$, one has 
\begin{align*}
  \|V\|_{E_T^s} \lesssim_{T,[M]_\alpha}  \|V(0)\|_{\H^s(\T^d)} + \| L_M V \|_{Y_T^{s-1}} + \Big(1+\|M\|_{X^{s+1}_T}\Big)\|V\|_{Y_T^s}.
\end{align*}
 \end{lem}
 Admitting for the moment the previous lemma, Theorem~\ref{thm:PetrovskiiHs} can be proved thanks to an approximation argument. First notice that owing to Lemma~\ref{lem:sobo2}, we only need to prove the $E_T^s$ part of the estimate as the Hölder one follows then immediately.

 \medskip
 
 If $M\in\mathscr{C}^{0,\alpha}(Q_T;\mathscr{P})\cap Y_T^{s+1}$, usual convolution properties lead to the existence of lipschitz matrix-valued functions $(M_\ep)_\ep$  for which
\begin{align}
  \label{ineq:Mep1}\|M_\ep\|_{\mathscr{C}^{0,\alpha}(Q_T)}&\leq \|M\|_{\mathscr{C}^{0,\alpha}(Q_T)}\, ,\\
  \label{ineq:Mep2}\|M-M_\ep\|_\infty&\leq \|M\|_{\mathscr{C}^{0,\alpha}(Q_T)}\ep^\alpha\, ,\\
    \label{ineq:Mep2bis}\|M_\ep\|_{Y_T^{s+1}}&\leq \|M\|_{Y_T^{s+1}}\, ,\\
  \label{ineq:Mep3}\|M_\ep\|_{X_T^{s+1}}&\leq (1+\ep^{-(s+1)})\|M\|_{\infty}\, .
 \end{align}
 Because of \eqref{ineq:Mep2} and the continuity of $\eta$ (see Paragraph~\ref{subsec:not} for the definition of~$\eta$, and Corollaries~\ref{coro:thetapet} and \ref{coro:etahold} for its properties), for $\ep$ small enough we have $\eta(M_\ep)\geq \eta(M)/2$ so that~$[M_\ep]_\alpha\lesssim[M]_\alpha$ and we thus can infer from Lemma~\ref{lem:goalL2:weak} that for any $V\in E_T^s$ such that~$L_{M _\ep}V\in Y_T^{s-1}$ and $\langle V(t)\rangle = 0$ for all $t\in[0,T]$,
\begin{align}\label{estimabove}
  \|V\|_{E_T^s}    \lesssim_{T,[M]_\alpha}  \|V(0)\|_{\H^s(\T^d)} +\| L_{M_\ep }V \|_{Y_T^{s-1}} 
+ \Big(1+\|M_\ep\|_{X_T^{s+1}}\Big)\|V\|_{Y_T^s} \, .
\end{align}
Now since
\begin{align*}
L_{M_\ep }V-L_{M }V = \sum_{k=1}^d \partial_k \big[(M_\ep-M)\partial_k V\big] \, ,
\end{align*}
we have thanks to \eqref{ineq:Mep2} and the product rule given in Proposition~\ref{prop:rhsSobo}  \begin{multline*}\|(L_{M_\ep}-L_M)V\|_{Y_T^{s-1}}^2 \lesssim  \ep^{2\alpha}  \|M\|_{\mathscr{C}^{0,\alpha}(Q_T)}^2 \int_0^T \|\nabla V(t)\|_{\H^s(\T^d)}^2\,\dd t \\+ \int_0^T \|(M_\ep-M)(t)\|_{\H^{s+1}(\T^d)}^2\|V(t)\|_{\H^s(\T^d)}^2\,\dd t\,,\end{multline*} so the estimate \eqref{estimabove} becomes, using \eqref{ineq:Mep3}
$$
\begin{aligned}
  \|V\|_{X_T^s}^2 +   \|\nabla V\|_{Y_T^s}^2 & \lesssim_{T,[M]_\alpha} \|V(0)\|_{\H^s(\T^d)} ^2+\| L_{M}V \|_{Y_T^{s-1}}^2 \\
  &\quad  + \big(1+ \ep^{-2(s+1)}\big)\int_0^T \|V(t)\|_{\H^s(\T^d)}^2\,\dd t   +\ep^{2\alpha}  \int_0^T \|\nabla V(t)\|_{\H^s(\T^d)}^2\,\dd t \\
 &\qquad  + \int_0^T \|(M_\ep-M)(t)\|_{\H^{s+1}(\T^d)}^2\|V(t)\|_{\H^s(\T^d)}^2\,\dd t\,.
\end{aligned}
$$
Since the multiplicative constant behind~$ \lesssim_{T,[M]_\alpha}$ is an increasing function~$  g(T+[M]_\alpha)$, if we take $\ep$ small enough so as
\begin{align}\label{ineq:fix:ep}
                                      g(T+[M]_\alpha)\ep^{2\alpha}<\frac12 \, ,
\end{align}
the previous estimate implies
\begin{multline}\label{ineq:1+Mep}
  \|V\|_{X_T^s}^2 +   \|\nabla V\|_{Y_T^s}^2  \lesssim_{T,[M]_\alpha} \|V(0)\|_{\H^s(\T^d)} ^2+\| L_{M}V \|_{Y_T^{s-1}}^2\\   + \big(1+ \ep^{-2(s+1)}\big)\int_0^T \|V(t)\|_{\H^s(\T^d)}^2\,\dd t  \\
 + \int_0^T \|(M_\ep-M)(t)\|_{\H^{s+1}(\T^d)}^2\|V(t)\|_{\H^s(\T^d)}^2\,\dd t\,.
\end{multline}
Recalling the definition of $X_T^s$ and using \eqref{ineq:Mep2bis} we are just off one Grönwall lemma of ending the proof of Theorem~\ref{thm:PetrovskiiHs}, provided $\ep$ can be replaced by some decreasing function of $T+[M]_\alpha$: let us track the precise dependence of $\ep$ with respect to $M$. The only two conditions on $\ep$ are sufficient smallness for \eqref{ineq:fix:ep} to hold, and  for~$\eta(M_\ep)\geq \eta(M)/2$ to be satisfied. For the first condition, it is clearly satisfied if $\ep$ is replaced by some decreasing function of $T+[M]_\alpha$. The second condition is trickier. As $\|M\|_\infty+\eta(M)^{-1}\leq [M]_\alpha$, we rely on Corollary~\ref{coro:etahold} to infer the existence of an non-increasing function $f$ such that \[\ep\leq f([M]_\alpha) \Rightarrow\eta(M_\ep)\geq \eta(M)/2,\] and we can thus replace $\ep$ by some non-increasing function of $[M]_\alpha$ in the previous computations. Theorem~\ref{thm:PetrovskiiHs} is proved. \qed

\subsection{Proof of Lemma~\ref{lem:goalL2:weak}}\label{subsec:proof_lemma} 
In this subsection we prove Lemma~\ref{lem:goalL2:weak} which, due to the argument of Subsection~\ref{subsec:redlip} implies Theorem~\ref{thm:PetrovskiiHs}. The idea  is to  reduce to the case of a constant in space matrix field considered in  Paragraph~\ref{sct:homogeneous}, namely Corollary~\ref{coro:systlinbis}, by a partition of unity of~$   \T^d$. 

\medskip

We start with a localization lemma.

 \begin{lem}\label{lem:loc}
Consider the assumptions of Lemma~\ref{lem:goalL2:weak}. For~$\ep$ small enough, and depending decreasingly on~$T+[M]_\alpha$, the following holds. For any $V\in E_T^s$ such that $L_{M }V $ belongs to~$Y_T^{s-1}$ and any smooth bump function $\theta$ supported in a ball of~$\T^d$ of radius~$\ep$, there holds 
\begin{align*}
    \|\mathbb{P}_0(\theta V)\|_{E_T^s}  \lesssim_{T,[M]_\alpha}\|\theta\|_{\H^{m}(\T^d)}\Big[\|\mathbb{P}_0(\theta V)(0)\|_{\H^s(\T^d)}  +  \|    L_{M }V\|_{Y_T^{s-1}} + \|M\|_{X_T^{s+1}} \|   V  \|_{Y_T^s}\Big],
\end{align*}
where $\mathbb{P}_0$ is the $\L^2(\T^d)$ projection onto vanishing mean functions, $\lesssim_{T,[M]_\alpha}$ a symbol depending only increasingly in $T+[M]_\alpha$ and $m$ a natural integer depending only on $s$ and~$d$.
      \end{lem}
      \begin{proof}
For the moment, let us start the computation with an arbitrary $\ep>0$ (yet to be fixed) with $\theta$ supported in the ball of center $x^\star \in \T^d$ and radius $\ep>0$. For $M^\star : t\mapsto M(t,x^\star)$ we have for all $t\in[0,T]$
 \begin{equation}\label{PstarP} 
 \forall x \in \mbox{Supp}\,  \theta \, , \quad \vertiii{M(t,x)-M^\star(t)} \leq \ep^\alpha \|M\|_{\mathscr{C}^{0,\alpha}(Q_T)}\,.
 \end{equation}
 Next  we compute
  \begin{align} \label{eq:defF}
L_{M^\star}(\theta V) := \partial_t( \theta V)-  M^\star \Delta (\theta V)= \theta  L_{M}V +  \sum_{k=1}^d \big( \partial_kR_k +      S_k  \big)\,,
  \end{align}
  where we noted
  \begin{align}
\label{eq:Rk}    R_k &:= (M-M^\star)\partial_k(\theta V),\\
\label{eq:Sk}    S_k &:= \stackrel{S_{k,1}}{\overbrace{-2 \partial_k (M V \partial_k \theta) }} + \stackrel{S_{k,2}}{\overbrace{ V \partial_k M \partial_k \theta+MV \partial^2_k \theta }}.
  \end{align}
Noticing $\mathbb{P}_0L_{M^\star} = L_{M^\star}\mathbb{P}_0$ and using Corollary~\ref{coro:systlinbis} we have therefore
\begin{align*}
  \|\mathbb{P}_0(\theta V)\|_{E_T^s} &\lesssim_{T,[M]_\alpha} \|\mathbb{P}_0(\theta V)(0)\|_{\H^s(\T^d)} + \|\mathbb{P}_0 L_{M^\star}(\theta V)\|_{Y_T^{s-1}}\\
  &\lesssim_{T,[M]_\alpha} \|\mathbb{P}_0(\theta V)(0)\|_{\H^s(\T^d)} + \|\mathbb{P}_0(\theta L_M V)\|_{Y_T^{s-1}} + \sum_{k=1}^d \|R_k\|_{Y_T^s} \\
  & \qquad\qquad\qquad  +\|\mathbb{P}_0S_k\|_{Y_T^{s-1}}.
\end{align*}
Now, as $\theta$ is supported on a ball $\B$ of radius $\ep$, if $\widetilde{\theta}$ is another $[0,1]$-valued bump function equalling $1$ on $\B$ and vanishing outside the ball $\widetilde{\B}$ of same center but twice radius, we infer from the definition \eqref{eq:Rk} of $R_k$ and the product rule Proposition~\ref{prop:rhsSobo}
\begin{align*}
  \|R_k\|_{Y_T^s} &= \|\widetilde{\theta}(M-M^\star)\partial_k(\theta V)\|_{Y_T^s} \\
                   &\lesssim \ep^{\alpha} \|M\|_{\mathscr{C}^{0,\alpha}(Q_T)}\|\partial_k(\theta V)\|_{Y_T^s} + \|\widetilde{\theta}(M-M^\star)\|_{X_T^{s+1}}\|\theta V\|_{Y_T^s},
\end{align*}
where we used estimate \eqref{PstarP} in the second line. Now, as $\widetilde{\theta}$ equals $1$ on $\B$ and vanishes outside $\widetilde{\B}$, there holds $\|\widetilde{\theta}\|_{\H^{s+1}(\T^d)} = f(\ep^{-1})$ where $f$ is some non-decreasing function that we do not compute. Using the algebra structure of $\H^{s+1}(\T^d)$, we infer
\vspace{-0.3cm}
\begin{multline*}
  \|\mathbb{P}_0(\theta V)\|_{E_T^s} \lesssim_{T,[M]_\alpha} 
 \|\mathbb{P}_0(\theta V)(0)\|_{\H^s(\T^d)} + \| \mathbb{P}_0(\theta L_{M} V)\|_{Y_T^{s-1}}  + \sum_{k=1}^d \|\mathbb{P}_0S_k\|_{Y_T^{s-1}}\\+ \ep^\alpha \|\nabla(\theta V)\|_{Y_T^s} + f(\ep^{-1})\|M\|_{X_T^{s+1}}\|\theta V\|_{Y_T^s}.
\end{multline*}
At this point, we fix the smallness of $\ep$ so as to absorb $\|\nabla(\theta V)\|_{Y_T^s}$ in the norm $\|\mathbb{P}_0(\theta V)\|_{E_T^s}$ of the left hand side. As the symbol $\lesssim_{T,[M]_\alpha}$ refers to some non-decreasing function of~$T+[M]_\alpha$, this procedure can indeed be carried on with a choice of $\ep$ as a non-increasing function of~$T+[M]_\alpha$. We have therefore
\begin{multline*}
  \|\mathbb{P}_0(\theta V)\|_{E_T^s} \lesssim_{T,[M]_\alpha} 
 \|\mathbb{P}_0(\theta V)(0)\|_{\H^s(\T^d)} + \|\mathbb{P}_0(\theta L_{M} V)\|_{Y_T^{s-1}}  \\ + \|M\|_{X_T^{s+1}}\|\theta V\|_{Y_T^s} +\sum_{k=1}^d \|\mathbb{P}_0S_k\|_{Y_T^{s-1}}.
\end{multline*}
Owing to the algebra structure of  $\H^s(\T^d)$, we have  \[\|\mathbb{P}_0(\theta V)(0)\|_{\H^s(\T^d)} + \|\theta V\|_{Y_T^s} \lesssim \|\theta\|_{\H^s(\T^d)} \Big( \|V(0)\|_{\H^s(\T^d)} + \|V\|_{Y_T^s}\Big),\] and for an integer $m$ large enough (see for instance the proof of \cite[Theorem 1.62]{BCD}) depending only on~$s$ and $d$ we have
\[\|\mathbb{P}_0(\theta L_M V)\|_{Y_T^{s-1}} \lesssim \|\theta\|_{\H^{m}(\T^d)} \|L_M V\|_{Y_T^{s-1}}.\]
At the end of the day, it only remains to handle the $S_k$ terms to conclude. Turning back to the decomposition $S_k = S_{k,1}+S_{k,2}$ given in \eqref{eq:Sk}, we note on the one hand that due to  the algebra structure of $\H^s(\T^d)$, there holds
\begin{align*}
  \|\mathbb{P}_0S_{k,1}\|_{Y_T^{s-1}} &\lesssim \|MV \partial_k\theta\|_{Y_T^s}   \\
  &\lesssim \|\theta\|_{\H^{s+1}(\T^d)} \|M\|_{X_T^{s}} \|V\|_{Y_T^s}\, .
\end{align*}
For the $0$ order terms, we can just write $\|\mathbb{P}_0S_{k,2}\|_{Y_T^{s-1}} \leq \|S_{k,2}\|_{Y_T^s}$ and invoke once more the algebra structure of $\H^s(\T^d)$ to infer
\[\|S_{k,2}\|_{Y_T^s} \lesssim \|\theta\|_{\H^{s+2}(\T^d)}\|M\|_{X_T^{s+1}} \|V\|_{Y_T^s}\, .\] 
The proof of Lemma~\ref{lem:loc} is over. $\qedhere$
\end{proof}
We now proceed to the proof of Lemma~\ref{lem:goalL2:weak}. We fix~$\ep>0$ as in Lemma~\ref{lem:loc} and decompose~$\T^d $ into a finite union of  essentially disjoint hypercubes denoted~$(K^{j}_\ep)_{1 \leq j \leq J_M} $  centered at  points~$ x_j \in    K^j_\ep $, with sidelengths~$\ep$.  This implies  that~$J_M$   is of the order of~$1/\ep^d$. We then consider a partition of unity~$(\theta^{j}_\ep)_{1 \leq j \leq J_M} $  where each~$\theta^{j}_\ep $ is compactly supported in a ball~$B^{j}_\ep$ of~$\T^d$  of radius~$\ep$ containing strictly~$K^j_\ep $, and takes it values in~$[0,1]$.
 We assume in particular that
for any multi-index~$\alpha  \in \N^d$, there is a   constant~$C_\alpha$  such that  for any~$1 \leq j \leq J_M$, any~$\ep>0$ and any~$t \geq 0$
 \begin{equation}\label{size derivatives}
     \| D_x^\alpha\theta^{j}_\ep\|_{\infty } \leq C_\alpha{\ep^{-|\alpha|}}  \, .
\end{equation} 
We first invoke the local estimate Lemma~\ref{lem:loc} for each of these bump functions   $\theta_{\ep}^j$. We have therefore, for some integer $m\in\N$ 
  \begin{align*}
    \| \mathbb{P}_0(\theta_\ep^j V)\|_{E_T^s} \lesssim_{T,[M]_\alpha} \|\theta_\ep^j \|_{\H^m(\T^d)} \Big[\|V(0)\|_{\H^s(\T^d)} + \|L_M V\|_{Y_T^{s-1}} +  \|M\|_{X_T^{s+1}}\|V\|_{Y_T^s}\Big],
  \end{align*}
for $\lesssim_{T,[M]_\alpha}$ a non-decreasing function of $T+[M]_\alpha$.  Thanks to \eqref{size derivatives}, since $\ep$ is chosen as non-increasing function of $T+[M]_\alpha$, we can actually simply the previous estimate and write
    \begin{align}\label{ineq:tosum}
    \| \mathbb{P}_0(\theta_\ep^j V)\|_{E_T^s} \lesssim_{T,[M]_\alpha} \|V(0)\|_{\H^s(\T^d)} + \|L_M V\|_{Y_T^{s-1}} +  \|M\|_{X_T^{s+1}}\|V\|_{Y_T^s}.
    \end{align}
As $\mathbb{P}_0$ is linear and $\mathbb{P}_0 V = V$, we have for all $t\in[0,T]$
    \begin{align*}
\|V\|_{E_T^s} = \left\|\sum_{j=1}^{J_M} \mathbb{P}_0(\theta_\ep^j V)\right\|_{E^s_T} \leq \sum_{j=1}^{J_M} \|\mathbb{P}_0(\theta_\ep^j V)\|_{E_T^s}.
    \end{align*}
Summing over $j\in\llbracket 1,J_M\rrbracket$ inequalities \eqref{ineq:tosum} we recover
    \begin{align*}
\|V\|_{E_T^s} \lesssim_{T,[M]_\alpha} J_M \Big[\|V(0)\|_{\H^s(\T^d)} + \|L_M V\|_{Y_T^{s-1}} +  \|M\|_{X_T^{s+1}}\|V\|_{Y_T^s}\Big],
    \end{align*}
which ends the proof of Lemma~\ref{lem:goalL2:weak} because $J_M\sim \ep^{-d}$, with $\ep$ a non-increasing function of~$T+[M]_\alpha$.  
 
\subsection{A useful corollary}\label{subsec:useful}
We end this series of \emph{a priori} estimates with a corollary of Theorem~\ref{thm:PetrovskiiHs} which will be useful when $M$ is of the form $A(U)$ with $A:\R^N\rightarrow\mathscr{P}$ a smooth matrix field, as this is the case in Theorem~\ref{thm:exloc}. Having estimate \eqref{thm2Hsestimate} in mind, it is natural to introduce for any $s>d/2$ and $T>0$ the following space \begin{align}\label{def:GTs}
G_T^s := E_T^s \cap \mathscr{C}^{0,\alpha_s}(Q_T).
     \end{align}                                                                           We have then, the following result.
\begin{coro}\label{coro:useful}
Fix $s>d/2$ and $T>0$. Consider $U:Q_T\rightarrow \R^N$ belonging to $G_T^{s}$ and $A:\R^N\rightarrow\mathscr{P}$ a smooth matrix field. For any $V$ in~$E_T^s$ such that $V(0)\in\H^s(\T^d)$, $L_{A(U)}V\in Y_T^{s-1}$ and $\langle V(t)\rangle=0$ for all $t\in[0,T]$, one has actually that $V$ belongs to $G_T^s$ and
\begin{align}\label{thm2Hsestimate:useful}  
\|V\|_{E_T^s} + \|V\|_{\mathscr{C}^{0,\alpha_s}(Q_T)} \lesssim_{T,\|U\|_{G_T^s}}  \|V^0\|_{\H^s(\T^d)} +  \|L_{A(U)}V\|_{Y_T^{s-1}} \, ,
\end{align}
where the symbol $\lesssim_{T,\|U\|_{G_T^s}}$ is a non-decreasing function of $T+\|U\|_{G_T^s}+\|A(0)\|$. 
 \end{coro}
\begin{proof}
  Of course the proof reduces to justifiying the use of Theorem~\ref{thm:PetrovskiiHs} for $M=A(U)$ and to replace the dependence $T+[A(U)]_\alpha+\|A(U)\|_{Y_T^{s+1}}$ by the above  one for the symbol $\lesssim_{T,\|U\|_{G_T^{s+1}}} $ appearing in the estimate.

  \medskip

  We first note that $A$ being smooth, it stabilizes Sobolev spaces and induces locally a lipschitz map. More precisely, since $U\in E_T^s\hookrightarrow\L^\infty(Q_T)$, we use Lemma~\ref{lem:lipsob} with $\Phi=A$, $f=U$, $g=0$ and $\sigma=s+1$ to recover $A(U)\in Y_T^{s+1}$ with a bound
                              \begin{align}\label{ineq:A(U)}
                                \|A(U)-A(0)\|_{Y_T^{s+1}} \lesssim_{\|U\|_{G_T^s}} \|U\|_{Y_T^{s+1}} \, .
                              \end{align}
                          Also, since $A$ is locally lipschitz, the $\alpha$-Hölder regularity of $U$ is inherited by $A(U)$ with an estimate of the form $\|A(U)\|_{\mathscr{C}^{0,\alpha}(Q_T)}\leq g(\|U\|_{\mathscr{C}^{0,\alpha}(Q_T)})$, with $g$ an increasing function. We are now in position to invoke Theorem~\ref{thm:PetrovskiiHs} which states exactly
                              \begin{align*}
\|  V \|_{G_T^s} \lesssim_{T,[A(U)]_\alpha,\|A(U)\|_{Y_T^{s+1}}}  \|V^0\|_{\H^s(\T^d)} +  \|L_{A(U)}V\|_{Y_T^{s-1}} \, .
\end{align*}
Recalling the definition of $[A(U)]_\alpha$ in Paragraph~\ref{subsec:not}, we only need to handle $\eta(A(U))^{-1}$. For this, we use Corollary~\ref{coro:thetapet} to see that $A$ maps the ball of radius $R$ of $\R^N$ into some $\mathscr{P}_{\delta_R}$ with $\delta_R$ decreasing in $R$, so that $\eta(A(U))^{-1}$ is indeed bounded by some increasing function of $\|U\|_\infty\leq \|U\|_{\mathscr{C}^{0,\alpha}(Q_T)}$. Corollary~\ref{coro:useful} is proved.                         \end{proof}

\section{Existence theory and parabolic regularization in the linear case}\label{sec:exlin}
In this short section we first prove Theorem~\ref{thm:exlin} thanks to the \emph{a priori} estimates Theorem~\ref{thm:PetrovskiiHs} established in Section~\ref{sec:estimlin}, and then state and prove as a corollary of these results a propagation of regularity result.

\subsection{Proof of Theorem~\ref{thm:exlin}}

Uniqueness is a straightforward consequence of estimate \eqref{thm2Hsestimate}, so we focus only on the existence part starting. Without loss of generality we only need to establish this existence result replacing $F$ by $\mathbb{P}_0 F$ and $V^0$ by $\mathbb{P}_0 V^0$: if a solution is built in this vanishing mean setting, adding to it
\[\langle V^0\rangle + \int_0^t \langle F(t')\rangle\,\dd t',\]
we will recover a solution in the general case.

The set~$\mathscr{C}^{0,\alpha}(Q_T;\mathscr{P})$ is path-connected for the $\mathscr{C}^{0,\alpha}(Q_T;\textnormal{M}_N(\R))$ topology as is startshaped with respect to the constant identity matrix map.

\medskip

Define $\mathscr{S}$ as the subset of $\mathscr{C}^{0,\alpha}(Q_T;\mathscr{P})\cap Y_T^{s+1}$ constituted of all those matrix-fields for which the problem \eqref{eq:Pt} (for arbitrary data~$(V^0,F)\in\mathcal{D}^s_T$ with vanishing mean) has a solution in $E_T^s$. The set~$\mathscr{S}$ is closed in~$\mathscr{C}^{0,\alpha}(Q_T;\mathscr{P})\cap Y_T^{s+1}$. Indeed, should~$(M_k)_k\in\mathscr{S}^\N$ converge uniformly to $M\in\mathscr{C}^{0,\alpha}(Q_T;\mathscr{P})$, this is already sufficient to ensure (see Corollary~\ref{coro:thetapet}) that $(\eta(M_k))_k$ converges to $\eta(M)>0$, so that the whole sequence satisfies~$\eta(M_k)\geq \delta$ for some $\delta>0$. Now, as $(M_k)_k$ is bounded in $\mathscr{C}^{0,\alpha}(Q_T;\mathscr{P})$ and~$\underline{\lim}_k \eta(M_k)>0$, we infer that~$\overline{\lim}_k [M_k]_\alpha<+\infty$. We thus infer uniformity in $k$ for the \emph{a priori} estimate \eqref{thm2Hsestimate} of Theorem~\ref{thm:PetrovskiiHs} satisfied by the solutions~$V_k$ associated with~$M_k$ (such solutions $V_k$ exist precisely because the sequence $(M_k)_k$ lies in $\mathscr{S}^\N$). The equation being linear, we recover in this way by a weak($-\star$) compactness argument first a solution in $\L^\infty(0,T;\H^s(\T^d))\cap Y_T^{s+1}$ which in fact belongs to $E_T^s$, using the equation to control the time derivative and the standard Lemma~\ref{lem:sobo1}. The subset~$\mathscr{S}$ is also open in~$\mathscr{C}^{0,\alpha}(Q_T;\mathscr{P})\cap Y_T^{s+1}$. Indeed, for $M\in\mathscr{S}$ and $\ep>0$ to be defined later, let's consider $\widetilde{M}\in\mathscr{C}^{0,\alpha}(Q_T;\mathscr{P})\cap Y_T^{s+1}$ such that \[\|M-\widetilde{M}\|_\infty +\|M-\widetilde{M}\|_{Y_T^{s+1}}< \ep.\] One can define the map from $E_T^s$ to itself which sends $V$ to the solution~$\widetilde{V}$ of (taking $V^0$ as initial data)
\begin{align*}
    \partial_t \widetilde{V}-\sum_{k=1}^d \partial_k \big[M\,\partial_k \widetilde{V}\big] = F+\sum_{k=1}^d \partial_k\big[(M-\widetilde{M})\partial_k V\big].
\end{align*}
The existence of~$\widetilde{V}$ is due to the fact that $M$ has been chosen in~$\mathscr{S}$. Linearity and the \emph{a priori} estimate \eqref{thm2Hsestimate} of Theorem~\ref{thm:PetrovskiiHs} provide
\begin{align*}
    \|\widetilde{V}_1-\widetilde{V}_{2}\|_{E_T^s}^2 \lesssim_{T,[M]_\alpha,\|M\|_{Y_T^{s+1}}} \int_0^T \|(M-\widetilde{M})\nabla (V_1-V_2)(t')\|_{\H^s(\T^d)}^2\,\dd t'.
\end{align*}
Owing to the product rule of in Proposition~\ref{prop:rhsSobo} we infer from the previous inequality the following 
\begin{align*}
  \|\widetilde{V}_1-\widetilde{V}_{2}\|_{E_T^s} &\lesssim_{T,[M]_\alpha,\|M\|_{Y_T^{s+1}}} \|M-\widetilde{M}\|_\infty \|V_1-V_2\|_{Y_T^{s+1}} + \|M-\widetilde{M}\|_{Y_T^{s+1}} \|V_1-V_2\|_{X_T^s}\\
  &\lesssim_{T,[M]_\alpha,\|M\|_{Y_T^{s+1}}} \ep \|V_1-V_2\|_{E_T^s} 
\end{align*}
Choosing $\ep$ small enough implies that the map $V\mapsto \widetilde{V}$ is a contraction from the Banach space $E_T^s$ to itself, thus $\mathscr{S}$ contains~$\widetilde{M}$. Finally, we proved that $\mathscr{S}$ is open and closed in~$\mathscr{C}^0(Q_T;\mathscr{P})$ which is connected; the set  $\mathscr{S}$ is non empty (constant matrices belong to~$\mathscr{S}$ thanks to Proposition~\ref{prop:systlin}) so  $\mathscr{S}=\mathscr{C}^{0,\alpha}(Q_T;\mathscr{P})$. 

\subsection{Propagation of regularity}
Let us prove the following result.
\begin{coro}\label{coro:thm2}
Let~$s>d/2$ and~$(V^0,F) \in \mathcal D_T^s$. Consider~$V \in E_T^s$ the associate solution to~{\rm(\ref{eq:Pt})} as given by Theorem~{\rm\ref{thm:exlin}}. Let~$s' \in [s,s+1]$ be given and assume furthermore that~$(V^0,F)$ belongs to~$\mathcal D_T^{s'}$. Then~$V$ actually belongs to $E_T^{s'}$ and satisfies
\begin{equation}\label{thm2Hsestimate:more}  
 \|  V \|_{E_T^{s'}}  \lesssim_{T,[M]_{\alpha},\|M\|_{Y_T^{s+1}}}  \|  (V ^0,F)\|_{\mathcal D_T^{s'}}   \, .\end{equation}
 \end{coro}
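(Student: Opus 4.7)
Since Theorem \ref{thm:exlin} already provides $V \in E_T^s$ (in particular $M \in Y_T^{s+1}$ is implicitly in force in that setting), the task is to propagate the additional Sobolev information from the data up to level $s'$. My plan is to re-run the Littlewood--Paley / paraproduct analysis of Subsection~\ref{subsec:proofHs} at the shifted regularity level $s'$, observing that the restriction $s' \leq s+1$ allows the commutator estimates to close using only $M \in Y_T^{s+1}$ (rather than $Y_T^{s'+1}$, which Theorem~\ref{thm:PetrovskiiHs} would naively require).

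As in the proof of Theorem~\ref{thm:exlin} I would first reduce to the mean-free situation by subtracting $\langle V^0\rangle + \int_0^\cdot \langle F\rangle$ from $V$; the contribution $\int_0^T |\langle F(t)\rangle|\,\dd t$ is already part of $\|(V^0,F)\|_{\mathcal D_T^{s'}}$. Applying $\Delta_j$ to~\eqref{eq:Pt} yields, as in~\eqref{eq:localized equation:com}, the identity $L_M \Delta_j V = \Delta_j F + R_j$ with $R_j = R_j^1 + R_j^2 + R_j^3$ decomposed as in~\eqref{defRjd}. Theorem~\ref{thm:PetrovskiiL2} applied to each $\Delta_j V$ then produces~\eqref{DeltajW}. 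The heart of the proof is to show that, setting
\[
v'(t) := (2^{js'}\|\Delta_j V(t)\|_{2})_j, \quad m(t) := (2^{j(s+1)}\|\Delta_j M_B(t)\|_{2})_j, \quad r'^i_j(t) := 2^{j(s'-1)}\|R_j^i(t)\|_{2},
\]
one has $\|r'^i(t)\|_{\ell^2(\Z)} \lesssim \|m(t)\|_{\ell^2(\Z)}\|v'(t)\|_{\ell^2(\Z)}$ for $i=1,2,3$. Crucially, $m$ carries only the $\H^{s+1}$-norm of $M_B$ even though $V$ is measured at the higher level~$s'$.

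For $R_j^1$ the computation of Subsection~\ref{subsec:proofHs} is repeated with $V$ at level $s'$; the only new feature is a factor $2^{-j'(s+1-s')}$, which is bounded thanks to $s' \leq s+1$, while the kernel $2^{(j-j')s'}\mathbf{1}_{j' \geq j - C}$ stays in $\ell^1(\Z)$, so that a Young-type inequality still closes the bound. For $R_j^2$ and $R_j^3$, the frequency restriction $|j-j'|\leq 1$ makes the relevant regularity of $M$ insensitive to the precise value of $s' \in [s,s+1]$, and the arguments of Subsection~\ref{subsec:proofHs} apply verbatim, using only $s > d/2$. Inserting these bounds into~\eqref{DeltajW}, summing over $j \leq J$ and applying Grönwall's lemma (the time integral $\int_0^T \|M_B(t)\|_{\H^{s+1}(\T^d)}^2\,\dd t = \|M_B\|_{Y_T^{s+1}}^2$ is finite), one obtains uniformly in $J \in \N$
\[
\|S_J V\|_{X_T^{s'}}^2 + \|\nabla S_J V\|_{Y_T^{s'}}^2 \lesssim_{T,[M]_\alpha,\|M_B\|_{Y_T^{s+1}}} \|V^0\|_{\H^{s'}(\T^d)}^2 + \|F\|_{Y_T^{s'-1}}^2.
\]
Letting $J \to \infty$ yields $V \in E_T^{s'}$, and Remark~\ref{rmk:distconst} (applied by optimizing over $B \in \textnormal{M}_N(\R)$) permits replacing $\|M_B\|_{Y_T^{s+1}}$ by $\dd_{Y_T^{s+1}}(M,\R)$, producing~\eqref{thm2Hsestimate:more}.

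The main obstacle is the careful verification of the three paraproduct bounds at the shifted level $s'$ while keeping $M$ only at the $(s+1)$-level. The delicate case is the endpoint $s' = s+1$, where the exponent $s+1-s'$ vanishes and convergence hinges purely on $\ell^1$-summability of the dyadic kernel rather than on any geometric decay; this is exactly the borderline for which the restriction $s' \in [s,s+1]$ in the statement is sharp.
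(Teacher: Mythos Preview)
Your approach is correct but genuinely different from the paper's. The paper proceeds more economically: it first invokes the well-posedness of Theorem~\ref{thm:exlin} to reduce to smooth $V$ and $F$, then uses interpolation $\H^{s'}=[\H^s,\H^{s+1}]_\theta$ to reduce to the endpoint $s'=s+1$. At that endpoint it differentiates the equation, so that $Z_\ell:=\partial_\ell V$ satisfies $L_M Z_\ell=\partial_\ell F+\sum_k\partial_k[(\partial_\ell M)\partial_k V]$, and simply applies Theorem~\ref{thm:PetrovskiiHs} at level~$s$ to each $Z_\ell$; the extra source $(\partial_\ell M)\partial_k V$ is estimated in $\H^s$ via the algebra property and absorbed by Gr\"onwall. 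Your route instead reopens the Littlewood--Paley machinery at the shifted level $s'$ and checks directly that the three commutator pieces $R_j^i$ still close with $M$ kept only at the $s+1$ level. The paper's argument is shorter and treats Theorem~\ref{thm:PetrovskiiHs} as a black box; yours handles all $s'\in[s,s+1]$ in one pass without interpolation and makes explicit why $s+1$ is the sharp ceiling imposed by the paraproduct bounds.

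One technical remark: your Gr\"onwall step (sum over $j\le J$, claim $\|S_JV\|_{\H^{s'}}$ on both sides) is not quite self-contained, because the bound $\|r'^i\|_{\ell^2}\lesssim\|m\|_{\ell^2}\|v'\|_{\ell^2}$ involves \emph{all} frequencies of $V$ on the right. This is the same imprecision present in the paper's own proof of Theorem~\ref{thm:PetrovskiiHs}; the clean fix---which the paper explicitly performs at the start of its proof of the corollary---is to first reduce to smooth data via the well-posedness of Theorem~\ref{thm:exlin}, so that all norms are a priori finite and Gr\"onwall applies without truncation.
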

\begin{proof}[Proof]
Thanks to the well-posedness setting of Theorem~\ref{thm:exlin} we only need to prove the estimate with smooth $V$ and $F$.
We have thanks to estimate \eqref{thm2Hsestimate} $$
 \|  V \|_{E_T^s}   \lesssim_{T,[M]_{\alpha},\|M\|_{Y_T^{s+1}}} \|  (V ^0,F)\| _{\mathcal D_T^{s}} \, . $$
 Using the interpolation $\H^{s'}(\T^d) = [\H^{s}(\T^d),\H^{s+1}(\T^d)]_\theta$ for~$
 s<s'<s+1$, an interpolation argument shows that it is enough to prove~(\ref{thm2Hsestimate:more}) for~$s'=s+1$.
Now, for any spatial derivative~$\partial_\ell$, we note that $Z_\ell:=\partial_\ell V$ solves
\begin{align*}
    \partial_t Z_\ell - \sum_{k=1}^d  \partial_k \big[ M \partial_k Z_\ell\big] = \partial_\ell F + \sum_{k=1}^d  \partial_k \big[ \partial_\ell M \partial_k V\big].  
\end{align*}
Using the assumption on $M$ and that $\nabla V \in L^2(0,T;\H^s(\T^d))$, the previous equality already implies that $\partial_t Z_\ell \in\L^1(0,T;\H^s(\T^d))$ which implies $Z_\ell\in\mathscr{C}^0([0,T];\H^s(\T^d))$; this establishes  that~$V\in\mathscr{C}^0([0,T];\H^{s+1}(\T^d))$. Then, we use once more estimate \eqref{thm2Hsestimate}     for each $\ell$ to infer after summation using the algebra structure of $\H^s(\T^d)$, for $t\leq T$
$$ 
   \|  \nabla V(t) \|_{E_T^s} ^2    \lesssim_{T,[M]_{\alpha},\|M\|_{Y_T^{s+1}}} \|  (\nabla V ^0,\nabla F)\|_{\mathcal D_T^{s}}^2 + \int_0^T \big \|\nabla M(t)\|_{\H^{s }(\T^d)}^2\| {V}(t)\|_{\H^{s+1}(\T^d)}^2\,\dd t 
$$and the conclusion follows by Gronwall's inequality. $\qedhere$
\end{proof}

\section{Proof of Theorem~\ref{thm:exloc}
}\label{sec:road}

Now that we have a clear setting of well-posedness for the linear problem \eqref{eq:Pt}, in order to prove Theorem~\ref{thm:exloc} we aim to solve \eqref{eq:PtNL} for a given $(U^0,F)\in \mathcal D_\infty^s$ on a small interval $[0,T_0]$, by means of a Picard scheme. For $s>d/2$ and $\alpha_s\in(0,1)$ given by Lemma~\ref{lem:sobo2}, we will use again in this paragraph the space $G_T^s$ introduced in \eqref{def:GTs} and by a small abuse of notation we will write $G_T^s(U^0)$ for the (closed) affine subspace of $G_T^s$ constituted of those vector fields $U$ satisfying $U(0)=U^0$. Note that~$G_T^s(U^0)$ is a complete metric space.

\subsection{Existence and uniqueness in a small ball of $G_T^s(U^0)$}\label{subsec:exunismall}
Given $(U^0,F)\in \mathcal D_\infty^s$, we consider the following map
\begin{align*}
  \Theta : G_T^s &\longrightarrow G_T^s(U^0) \\
   U &\longmapsto U^\star,
\end{align*}
where $U^\star$ is the only element (existence and uniqueness stem from Theorem~\ref{thm:exlin}) of $G_T^s(U^0)$ solving $L_{A(U)} U^\star = F$. The use of Theorem~\ref{thm:exlin} is justified because $U\in G_T^s \Rightarrow A(U)\in G_T^s$, using Lemma~\ref{lem:lipsob} with $\Phi=A$, $f=U$ and $g=0$. Just as we did in the proof of Corollary~\ref{coro:useful} in Paragraph~\ref{subsec:useful} we recover in this way that $A(U)\in Y_T^{s+1} \cap \mathscr{C}^{0,\alpha_s}(Q_T)$. To see that~$A(U)$  belongs to~$ X_T^s$ we rely on Lemma~\ref{lem:lipsob} (with~$f = U$ and~$g = 0$) using $X_T^s\hookrightarrow\L^\infty(Q_T)$.

\medskip

Now that $\Theta$ is well-defined for all $T>0$, we hope to find a time small enough so as $\Theta$ becomes a contraction. Since \[L_{A(U_1)}(U_1^\star - U_2^\star) = \sum_{k=1}^d \partial_k\big[(A(U_1)-A(U_2))\partial_k U_2^\star\big],\]we infer from Corollary~\ref{coro:useful} 
\begin{align*}
\|U_1^\star - U_2^\star\|_{G_T^s} \lesssim_{T,\|U_1\|_{G_T^s}} \|A(U_1)-A(U_2)\|_{X_T^s} \|\nabla U_2^\star\|_{Y_T^s} \, .
\end{align*}
Thanks to  Lemma~\ref{lem:lipsob} we have
\begin{align}\label{ineq:lipAvrai}
    \forall U_1,U_2\in X_T^s,\qquad \|A(U_1)-A(U_2)\|_{X_T^s} \leq \ffi(\|U_1\|_{X_T^s}+\|U_2\|_{X_T^s})\|U_1-U_2\|_{X_T^s} \, ,
\end{align}
where $\ffi$ is some increasing function related to $A$, so
\begin{align}\label{ineq:contrapa}
\|U_1^\star - U_2^\star\|_{G_T^s} \lesssim_{T,\|U_1\|_{G_T^s} , \|U_2\|_{X_T^s}} \|U_1-U_2\|_{X_T^s} \|\nabla U_2^\star\|_{Y_T^s} \, .
\end{align}
  It seems clear, due to the presence of the multiplicative constant, that no global contraction rate can be achieved for $\Theta$ and we need to localize this map on some ball of $G_T^s$ to hope for a contraction. On the other hand $\|\nabla U_2^\star \|_{Y_T^s}^2$ will indeed tend to be small as $T\rightarrow 0$, but with a decay which will depend on $U_2$ and not only on the data of the problem. The strategy is thus to choose as fixed profile $U_F\in G_{T}^s(U^0)$ around which the fixed-point will be searched. More precisely, we have the following lemma, recalling here the notation
  \[ \|  (U ^0,F)\| _{\mathcal D_T^{s}}:=\|U^0\|_{\H^s(\T^d)}+\|F\|_{Y_T^{s-1}}+\int_0^T |\langle F(t)\rangle|\,\dd t\, ,\]
to keep track of the data's size.
\begin{lem}\label{lem:contraction}
                                 Fix $s>d/2$ and data~$(U^0,F) \in \mathcal D_\infty^s$. For any $T>0$ there exists a unique $U_F\in G_T^s(U^0)$ such that $L_{A(0)} U_F = F$ and it satisfies the following estimate 
                                 \begin{align}\label{ineq:UF}
\|U_F\|_{G_T^s} \lesssim \|  (U ^0,F)\| _{\mathcal D_T^{s}}\, ,
                                 \end{align}
                                 where $\lesssim$ depends only on the matrix $A(0)$. Furthermore, there exists an increasing function~$g$ such that, for any $r\in(0,1]$ and $T>0$, the closed ball $\B_{G_T^s}(U_F,r)$ is stabilized by $\Theta$ as soon as $T$ and $r$ satisfy
                                 \begin{align}\label{ineq:stabT}
                                   g(T+1+ \|  (U ^0,F)\| _{\mathcal D_T^{s} })\|\nabla U_F\|_{Y_T^s} \leq r\, .
                                 \end{align}
                                  Under this condition $\Theta$ is lipschitz on $\B_{G_T^s}(U_F,r)$ with a lipschitz constant bounded by
\[g(T+1+ \|  (U ^0,F)\| _{\mathcal D_T^{s}})(r+\|\nabla U_F\|_{Y_T^s})\, .\]
\end{lem}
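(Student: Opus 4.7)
The plan is to build $U_F$ directly from the constant-coefficient theory developed in Section~\ref{sec:exlin}, and then to treat $\Theta$ as a perturbation anchored at $U_F$, controlling both $\Theta(U)-U_F$ and $\Theta(U_1)-\Theta(U_2)$ by the available linear estimates.

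For the construction of $U_F$, I would invoke Theorem~\ref{thm:exlin} with the (trivially H\"older) constant matrix field $M:=A(0)\in\mathscr{P}$: it yields a unique $U_F\in G_T^s(U^0)$ solving $L_{A(0)}U_F=F$. To read off the estimate~\eqref{ineq:UF}, I would split $U_F = \langle U_F(t)\rangle + \mathbb{P}_0 U_F$. Taking spatial averages in the equation gives $\partial_t\langle U_F\rangle = \langle F\rangle$, so $\langle U_F(t)\rangle = \langle U^0\rangle + \int_0^t \langle F(t')\rangle\,\dd t'$ is explicit and its contribution to~$\|U_F\|_{G_T^s}$ is exactly the last two summands of $\|(U^0,F)\|_{\mathcal D_T^s}$. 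The mean-free part is handled by Theorem~\ref{thm:PetrovskiiHs} (together with Lemma~\ref{lem:sobo2} for the H\"older component); all constants depend only on $A(0)$ since $[A(0)]_\alpha$ is a pure function of $A(0)$ and $\dd_{Y_T^{s+1}}(A(0),\R)=0$.

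For the stabilization of the ball, I would fix $U \in \B_{G_T^s}(U_F,r)$ with $r\in(0,1]$ and set $W := \Theta(U) - U_F$. Both $\Theta(U)$ and $U_F$ satisfy the same scalar ODE $\partial_t\langle\cdot\rangle=\langle F\rangle$ with identical initial mean, so $W$ is mean-free at every time and vanishes at $t=0$. Subtracting the two equations yields
\[
L_{A(U)} W = \sum_{k=1}^d \partial_k\bigl[(A(U)-A(0))\,\partial_k U_F\bigr]\, ,
\]
and Corollary~\ref{coro:useful} applied to $W$ with matrix field $A(U)$ (combined with $\|\partial_k\,\cdot\|_{Y_T^{s-1}}\leq \|\cdot\|_{Y_T^s}$ and the algebra property of $\H^s(\T^d)$ for $s>d/2$) gives
\[
\|W\|_{G_T^s} \lesssim_{T,\|U\|_{G_T^s}} \|A(U)-A(0)\|_{X_T^s}\,\|\nabla U_F\|_{Y_T^s}\, .
\]
The Lipschitz estimate~\eqref{ineq:lipAvrai} bounds the first factor by $\ffi(\|U\|_{X_T^s})\|U\|_{X_T^s}$, and since $\|U\|_{G_T^s}\leq \|U_F\|_{G_T^s}+r\leq \|U_F\|_{G_T^s}+1\lesssim \|(U^0,F)\|_{\mathcal D_T^s}+1$ by the first step, all the implicit constants can be absorbed into a single increasing function $g(T+1+\|(U^0,F)\|_{\mathcal D_T^s})$. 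The threshold~\eqref{ineq:stabT} is then exactly what is needed to guarantee $\|W\|_{G_T^s}\leq r$.

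The Lipschitz bound would read off directly from the already derived inequality~\eqref{ineq:contrapa}: for $U_1,U_2\in\B_{G_T^s}(U_F,r)$ one has $\max(\|U_1\|_{G_T^s},\|U_2\|_{X_T^s})\leq \|U_F\|_{G_T^s}+r\lesssim \|(U^0,F)\|_{\mathcal D_T^s}+1$, while the stability just proven forces $\Theta(U_2)\in \B_{G_T^s}(U_F,r)$, hence $\|\nabla \Theta(U_2)\|_{Y_T^s}\leq \|\nabla U_F\|_{Y_T^s}+r$. Plugging these into~\eqref{ineq:contrapa} and using $\|U_1-U_2\|_{X_T^s}\leq \|U_1-U_2\|_{G_T^s}$ produces the announced Lipschitz constant. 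The main technical difficulty is purely bookkeeping: guaranteeing that the implicit constants coming from Corollary~\ref{coro:useful}, the algebra estimate, and the Lipschitz bound~\eqref{ineq:lipAvrai} can all be subsumed under the \emph{same} monotone function $g$ of $T+1+\|(U^0,F)\|_{\mathcal D_T^s}$, which in turn relies on the a priori bound~\eqref{ineq:UF} already provided by the construction of $U_F$.
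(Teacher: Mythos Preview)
Your approach is essentially the same as the paper's: build $U_F$ from the constant-coefficient theory, then control $\Theta(U)-U_F$ and $\Theta(U_1)-\Theta(U_2)$ via the linear estimate (the paper phrases the stabilization step as~\eqref{ineq:contrapa} applied with $U_2=0$, $U_2^\star=U_F$, which is exactly your computation for $W$).

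One correction is needed for~\eqref{ineq:UF}. You invoke Theorem~\ref{thm:PetrovskiiHs} for the mean-free part, arguing that $[A(0)]_\alpha$ and $\dd_{Y_T^{s+1}}(A(0),\R)=0$ depend only on $A(0)$. But the implicit constant in Theorem~\ref{thm:PetrovskiiHs} is $\lesssim_{T,[M]_\alpha,\dd_{Y_T^{s+1}}(M,\R)}$: it carries an explicit $T$-dependence regardless of the other two arguments (this $T$ enters through the Gr\"onwall step and the localization in the proof of Theorem~\ref{thm:PetrovskiiL2}). So your argument as written only yields $\|U_F\|_{G_T^s}\lesssim_T \|(U^0,F)\|_{\mathcal D_T^s}$, not the $T$-independent bound the lemma asserts. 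The paper avoids this by going directly to Proposition~\ref{prop:systlin} (the constant-matrix estimate~\eqref{eq:boundlinHs}, whose constant $\C_{A(0),\delta}$ is manifestly $T$-free), combined with Lemma~\ref{lem:sobo2} for the H\"older part. This is a one-line fix, and the remainder of your argument is unaffected: even a $T$-dependent version of~\eqref{ineq:UF} would suffice for the stabilization and Lipschitz bounds, since those are allowed to depend on $T$ through $g$.
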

                               \begin{proof}
                        We note that         $U_F$ is in fact nothing more than $\Theta(0)$, so its existence and uniqueness are not new since we already proved that $\Theta$ is well-defined. However, the symbol $\lesssim$ in estimate \eqref{ineq:UF} is independent of the time variable, and this is important. To obtain this, we rely on the setting for constant matrix fields given in Proposition~\ref{prop:systlin}, using Lemma~\ref{lem:sobo2} to add the Hölder norm in the estimate and adding the time evolution of the spatial average as we did in the beginning of Section~\ref{sec:exlin}.   This proves~(\ref{ineq:UF}).

                        For any $r\in(0,1]$, if $U_1 $ belongs to~$\B_{G_T^s}(U_F,r)$, we infer from \eqref{ineq:contrapa} applied with~$U_2=0$ and~$U_2^\star = \Theta(0)=U_F$ that
\begin{align}\label{ineq:stabBf}
\|U_1^\star - U_F\|_{G_T^s} \lesssim_{T,\|U_1\|_{G_T^s} ,1} \|U_1\|_{X_T^s}\|\nabla U_F\|_{Y_T^s} \, .
\end{align}
 Using $\|U_1-U_F\|_{G_T^s}\leq r \leq 1$ together with \eqref{ineq:UF}, we get, for some increasing function $g_1$,
\begin{align*}
\|U_1^\star - U_F\|_{G_T^s} \leq  g_1(T+1+ \|  (U ^0,F)\| _{\mathcal D_T^{s}})\|\nabla U_F\|_{Y_T^s}\, .
\end{align*}
Now if indeed $T$ is small enough so as \[g_1(T+1+ \|  (U ^0,F)\| _{\mathcal D_T^{s}}) \|\nabla U_F\|_{Y_T^s} \leq r\, ,\]
we have that $\Theta(U_1) = U_1^\star$ lies in $\B_{G_T^s}(U_F,r)$ so that this closed ball is indeed preserved by~$\Theta$. Finally to evaluate the lipschitz constant of $\Theta$ on this ball, we use once more \eqref{ineq:contrapa} with $U_1,U_2\in \B_{G_T^s}(U_F,r)$ and the triangular inequality to infer, for some increasing function $g_2$ 
\begin{align*}
  \|U_1^\star - U_2^\star\|_{G_T^s} &\leq g_2(T+2\|U_F\|_{G_T^s}+2r)  \|\nabla U_2\|_{Y_T^s}\|U_1-U_2\|_{G_T^s}\\
  &\le g_3(T+1+ \|  (U ^0,F)\| _{\mathcal D_T^{s}})(r+\|\nabla U_F\|_{Y_T^s}) \|U_1-U_2\|_{G_T^s} \,,
\end{align*}
where we used \eqref{ineq:UF}, $r\leq 1$ and $g_3(z): = g_2(2z)$. The proof follows for $g:=\max(g_1,g_3)$.
\end{proof}
The proof of Theorem~\ref{thm:exloc} will now follow from Lemma~\ref{lem:contraction} and Picard's fixed-point theorem. For any $T>0$, if we choose
\[r=r_T := g(T+1+ \|  (U ^0,F)\| _{\mathcal D_T^{s}})\|\nabla U_F\|_{Y_T^s} \,,\]
where $g$ is the increasing function of Lemma~\ref{lem:contraction}, estimate \eqref{ineq:stabBf} is automatically satisfied and $\B_{G_T^s}(U_F,r_T)$ is thus preserved by $\Theta$. For this choice $r=r_T$, the bound of the lipschitz constant given in Lemma~\ref{lem:contraction} becomes strictly less than $1$ as soon as
\begin{align*}
r_T + \|\nabla U_F\|_{Y_T^s} = \big[1+g(T+1+ \|  (U ^0,F)\| _{\mathcal D_T^{s}})\big]\|\nabla U_F\|_{Y_T^s} < \frac{1}{g(T+1+ \|  (U ^0,F)\| _{\mathcal D_T^{s}})} \,,
\end{align*}
which ultimately takes the form
\begin{align}\label{ineq:ultimate}
\|\nabla U_F\|_{Y_T^s} < \frac{1}{h(T+1+ \|  (U ^0,F)\| _{\mathcal D_T^{s}})} \,,
\end{align}
for yet another increasing function $h$. This ends the proof of local existence for Theorem~\ref{thm:exloc} because  \[\|\nabla U_F\|_{Y_T^s}^2 := \int_0^T \|\nabla U_F(t)\|_{\H^s(\T^d)}^2\,\dd t\]
tends to $0$ as $T\rightarrow 0$, so we recover indeed for $T$ small enough that $\Theta$ induces a contraction map on $\B_{G_T^s}(U_F,r_T)$ and have thus a fixed-point.
\subsection{Global uniqueness and stability} 
In the previous paragraph, we have shown the existence of a solution on some small time interval. We have also, by construction, proved its uniqueness but only in an appropriate neighboorhood of $U_F$. In this short paragraph, we establish global uniqueness (as stated in Theorem~\ref{thm:exloc}) of this solution in $E_T^s$ by means of the stability estimate \eqref{eq:stabNL} (which obviously implies uniqueness). To prove this estimate, we rely once more on Theorem~\ref{thm:PetrovskiiHs}.

\medskip

We consider therefore~$U_1$ and~$U_2$ two solutions, associated with   data~$(U_1^0,F_1)$ and~$(U_2^0,F_2)$ respectively, and let~$T>0$ be a common time of existence; both solutions are in $E_T^s$ by assumption and since their time derivatives belong to $Y_T^{s-1}$, we have of course that both of them are in $G_T^s$ (see Lemma~\ref{lem:sobo2}), just as the solutions we built above.

\medskip

We set~$V:=U_1-U_2$ and notice that
\[L_{A(U_1) }V=\sum_{k=1}^d\partial_k\big([A(U_1)-A(U_2)]\partial_k U_2\big) + F_1-F_2\, .\]
Since $L_{A(U_1) }V = \mathbb{P}_0\,L_{A(U_1) }V = L_{A(U_1)}\mathbb{P}_0 V$, we infer from Corollary~\ref{coro:useful}, using that $\H^s(\T^d)$ is an algebra,
$$\begin{aligned}
    \|\mathbb{P}_0 V \|_{G_T^s}^2& \lesssim_{T, \|U_1\|_{G_T^s}} \|\mathbb{P}_0 V(0)\|_{\H^s(\T^d)}^2\\
    & \quad + \int_0^T \|A(U_1)(t)-A(U_2)(t)\|_{\H^s(\T^d)}^2\|\nabla U_2(t)\|_{\H^{s}(\T^d)}^2 \,\dd t \\
&\qquad +  \int_0^T \|{\mathbb P}_0(F_1-F_2)  (t)\|_{\H^{s-1}(\T^d)}^2 \, \dd t\, .
\end{aligned}
$$
 Using once more the lipschitz estimate of Lemma~\ref{lem:lipsob} with $\Phi=A$, $f=U_1$ and $g=U_2$, we infer as $G_T^s\hookrightarrow X_T^s \hookrightarrow \L^\infty(Q_T)$, after adding $\langle V(t)\rangle^2 = \Big(\langle V(0)\rangle+  \int_0^t \langle F_1-F_2 \rangle (t') \, \dd t'\Big)^2$ on both sides   and replacing $T$ by an arbitrary $t\in[0,T]$
 $$
\begin{aligned}
    \|V(t)\|_{\H^s(\T^d)}^2 &+ \int_0^t \|\nabla V(t')\|_{\H^s(\T^d)}^2 \,\dd t'\\ 
    &\quad \lesssim_{T,\|U_1\|_{G_T^s},\|U_2\|_{G_T^s}} \|V(0)\|_{\H^s(\T^d)}^2+ \int_0^t \|V(t')\|_{\H^s(\T^d)}^2\|\nabla U_2(t')\|_{\H^{s}(\T^d)}^2 \,\dd t \\
  &\qquad  + \int_0^t \|{\mathbb P}_0(F_1-F_2)  (t')\|_{\H^{s-1}(\T^d)}^2 \, \dd t ' +\Big(  \int_0^t \langle F_1-F_2 \rangle (t') \, \dd t'\Big)^2 \, .
\end{aligned}
$$
 Grönwall's lemma allows to conclude and establish \eqref{eq:stabNL}. Theorem~\ref{thm:exloc}
 is proved. \qed

\section{Proof of Theorem~\ref{thm:lbu}}\label{prooflbu}

In this last section, we prove Theorem~\ref{thm:lbu}. The three coming paragraphs respectively focus on points $(i)$, $(ii)$ and $(iii)$ in the statement of the theorem.
\subsection{Global solutions for small data}
We rely, just as we did in Subsection~\ref{subsec:exunismall}, on a Picard scheme. We use the same map $\Theta:U\mapsto U^\star$ introduced at the beginning of Subsection~\ref{subsec:exunismall} and defined on $G_T^s(U^0)$. Instead of $\Theta(0) = U_F$, we shall choose $0$ as center of the ball. We first note for any $U\in G_T^s(U^0)$ that
\begin{align}\label{eq:globT}
L_{A(0)}U^\star = \sum_{k=1}^d\partial_k\big([A(U)-A(0)]\partial_k U^\star\big) + F\,.
\end{align}
Now recall the existence of an increasing function $\ffi$ depending only on $A$ and satisfying~\eqref{ineq:lipAvrai}. Together with the algebra structure of $\H^s(\T^d)$, we then   write
 \begin{align*}\Big\|\big[A(U)-A(0)\big]\partial_k U^\star\Big\|_{Y_T^s}&\leq \|A(U)-A(0)\|_{X_T^s}\|\nabla U^\star\|_{Y_T^s} \\
&\leq \ffi(\|U\|_{X_T^s}) \|U\|_{X_T^s}\|U^\star \|_{Y_T^{s+1}}\\
&\leq \ffi(\|U\|_{G_T^s}) \|U\|_{G_T^s}\|U^\star \|_{G_T^s}\,.\end{align*}
Then, returning to \eqref{eq:globT}, the point is, instead of using Corollary~\ref{coro:useful}, to rely on Proposition~\ref{prop:systlin}, for which the estimate is independent of the time variable. More precisely, just as we did for $U_F$ in the proof of Lemma~\ref{lem:contraction}, using Lemma~\ref{lem:sobo2} to add the Hölder norm in the estimate and adding the time evolution of the spatial average, we infer
\begin{align}\label{ineq:globT}
\|U^\star\|_{G_T^s} \leq \textnormal{C}_{A(0)} \Big( \|  (U ^0,F)\| _{\mathcal D_\infty^{s}} + \ffi(\|U\|_{G_T^s})\|U\|_{G_T^s} \|  U^\star\|_{G_T^s}\Big) \,,
\end{align}
where $\textnormal{C}_{A(0)}$ depends only on the matrix $A(0)$ and $ \|  (U ^0,F)\| _{\mathcal D_\infty^{s}}$ is finite by assumption. Now, fix $r\in(0,1]$ such that $r\textnormal{C}_{A(0)}\ffi(1)<1$. For any $U\in \B_{G_T^s}(0,r)$ estimate \eqref{ineq:globT} implies
\begin{align*}
\|U^\star\|_{G_T^s} \leq \frac{\textnormal{C}_{A(0)}}{1-r\ffi(1)\textnormal{C}_{A(0)}}  \|  (U ^0,F)\| _{\mathcal D_\infty^{s}}\,.
\end{align*}
In particular, for any $r$ as above, if
\begin{align}\label{ineq:smallnessdd}
   \|  (U ^0,F)\| _{\mathcal D_\infty^{s}} \leq r \left(\frac{1}{\textnormal{C}_{A(0)}}-r\ffi(1)\right)\,,
  \end{align}
we have just proved that for all times $T$ the closed ball $\B_{G_T^s}(0,r)$ is preserved by $\Theta$. What about the lipschitz constant of $\Theta$ in that ball ? Just as in \eqref{eq:globT} we rely on the flow of the constant matrix field $A(0)$ writing for $U_1,U_2$ in $\B_{G_T^s}(0,\textnormal{C}_{A(0)}/2)$ 
$$
\begin{aligned}
L_{A(0)}(U_1^\star-U_2^\star)& = \sum_{k=1}^d \partial_k\Big(\big[A(U_1)-A(0)\big]\partial_k \big[U_1^\star-U_2^\star\big]\Big) \\
& \qquad + \sum_{k=1}^d \partial_k\Big(\big[A(U_1)-A(U_2)\big]\partial_k U_2^\star\Big)\,.
\end{aligned}
$$
We use as above Proposition~\ref{prop:systlin} together with Lemma~\ref{lem:sobo2} to estimate the full $G_T^s$ norm, and \eqref{ineq:lipAvrai} just as above with the pair $(U_1,0)$ and $(U_1,U_2)$: since~$U_1^\star-U_2^\star$ vanishes at the initial time, we obtain  that for the same increasing function $\ffi$ as before 
$$
\begin{aligned}
\|U_1^\star-U_2^\star\|_{G_T^s}& \leq \textnormal{C}_{A(0)} \Big(\ffi(\|U_1\|_{X_T^s})\|U_1\|_{X_T^s}\|U_1^\star-U_2^\star\|_{Y_T^{s+1}} \\
 &\qquad  + \ffi(\|U_1\|_{X_T^s}+\|U_2\|_{X_T^s})\|U_1-U_2\|_{X_T^s}\|U_2^\star\|_{Y_T^{s+1}}\Big)\,.
\end{aligned}
$$
Since $U_1$, $U_2$ and $U_2^\star$ both belong to $\B_{G_T^s}(0,r)$ with $G_T^s\hookrightarrow X_T^s\cap Y_T^{s+1}$ (with  operator norm less than $1$), and $r\in(0,1]$ is such that $r\textnormal{C}_{A(0)}\ffi(1)<1$, we get
\begin{align*}
\|U_1^\star-U_2^\star\|_{G_T^s} \leq r\frac{\textnormal{C}_{A(0)}\ffi(2)}{1-r\textnormal{C}_A(0)\ffi(1)} \|U_1-U_2\|_{G_T^s} \, .
\end{align*}
So we first choose $r\in(0,1\wedge(\textnormal{C}_{A(0)}\ffi(1))^{-1})$ small enough so as
\[r\frac{\textnormal{C}_{A(0)}\ffi(2)}{1-r\textnormal{C}_A(0)\ffi(1)}<1\,,\]
and this defines the threshold \eqref{ineq:smallnessdd} for $ \|  (U ^0,F)\| _{\mathcal D_\infty^{s}}$ below which we have a solution for all times, thanks to Picard's fixed-point theorem. Point~(i) of  Theorem~\ref{thm:lbu} is proved.

\subsection{Finer description of the lifetime}
Let us prove point~(ii) of  Theorem~\ref{thm:lbu}. We consider~$(U^0,F)$ in~$\mathcal D_\infty^{s}$. We recall the sufficient condition \eqref{ineq:ultimate}  for a solution to 
exist  in~$E_T^s$
(where~$h$ is some increasing function). For any $T>0$ satisfying this condition, we have~$T_s^\star \geq T$. 
Now consider~$\sigma $ any real number in~$ (d/2,s)$ such that~$s \leq \sigma+1$.  Using the interpolation $\H^\sigma(\T^d) = [\H^{\sigma-1}(\T^d),\H^s(\T^d)]_\theta$ we can write for some $\theta\in(0,1)$
\begin{align*}
  \|\nabla U_F(t)\|_{\H^{\sigma}(\T^d)} &\leq \|\nabla U_F(t)\|_{\H^{\sigma-1}(\T^d)}^\theta \|\nabla U_F(t)\|_{\H^s(\T^d)}^{1-\theta}\\
                                        &\leq  \|U_F(t)\|_{\H^{\sigma}(\T^d)}^\theta \|\nabla U_F(t)\|_{\H^s(\T^d)}^{1-\theta}.
\end{align*}
We have thus
\begin{align*}
  \|\nabla U_F\|_{Y_T^\sigma} &\leq \|U_F\|_{X_T^\sigma}^\theta \|\nabla U_F\|_{Y_T^s}^{1-\theta} T^{\theta/2},\\
                                        &\leq    \|  (U ^0,F)\| _{\mathcal D_T^{s}} T^{\theta/2},
\end{align*}
where we used $\sigma\leq s$ and \eqref{ineq:UF}. Now let us explore   the sufficient condition \eqref{ineq:ultimate}   in the~$\H^\sigma(\T^d)$ setting:  we see that it is satisfied as soon as
\begin{align*}
T^{\theta/2} \leq \frac{1}{ \|  (U ^0,F)\| _{\mathcal D_T^{s}}}\frac{1}{h(T+1+ \|  (U ^0,F)\| _{\mathcal D_T^{\sigma}})} \, \cdotp
  \end{align*}
  Using again $ \|  (U ^0,F)\| _{\mathcal D_T^{\sigma}}\leq  \|  (U ^0,F)\| _{\mathcal D_T^{s}}$, the previous inequality is satisfied as soon as
\begin{align*}
T \leq \Phi(T+1+ \|  (U ^0,F)\| _{\mathcal D_T^{s}})\, ,
\end{align*}
where $\Phi(z)=z^{-2/\theta}h(z)^{-2/\theta}$. Since $\Phi$ is decreasing, it is also the case of the function~$\Phi^{-1}$ and~$\Psi:z\mapsto \Phi^{-1}(z)-z-1$, and we find that
$$
T_\sigma^\star  (U ^0,F)\geq  \varphi (  \|  (U ^0,F)\| _{\mathcal D_T^{s}})
$$
 where $\ffi:=\Psi^{-1}$ is indeed decreasing. It now suffices to prove that~$T_s^\star  (U ^0,F)\geq T_\sigma^\star  (U ^0,F)$ (note that the reverse inequality is obvious). But this is actually an immediate consequence
 of the propagation of regularity result stated in Corollary~\ref{coro:thm2}, so point~(ii) of  Theorem~\ref{thm:lbu} is proved.

\subsection{Blow-up for finite lifetime}\
This follows directly from~(ii).

\section{Sign preservation}\label{proofprop:sign}
In this section we prove   Proposition \ref{prop:sign} and Theorem \ref{thm:SKT},   which in particular leads to a     well-posedness result of the SKT system as explained in the introduction of this paper.
\subsection{Proof of Proposition \ref{prop:sign}}
Let us  consider~$A$ a smooth sign-preserving matrix field in the sense of Definition~\ref{def:nonneg}, and~$U$ a smooth solution to \eqref{eq:PtNL:reac} (namely in $E^s_T$ for~$s>d/2+2$). We assume that the data~$U^0\in\H^s(\T^d)$ and~$F$ in~$Y_\infty^{s-1}$ are non-negative. Consider the set $\{t \in[0,T] \,:\, U(s)\geq 0, \forall s\in[0,t]\}$ which is non-empty (it contains $0$ by assumption), and let's assume that its supremum $t_\star$ is strictly less than $T$. By the sign-preserving property, we know that $A(U) = D(U)+ \textnormal{diag}(U)B(U)$, where $D(U)\geq \alpha \textnormal{I}_N$ whenever $U\geq 0$. In particular, by a standard continuity argument we have the existence of $t^\star\in (t_\star,T)$ such that $D(U)\geq 0$ on $[0,t^\star]$.

  \medskip
  
  Since $R(U)= \textnormal{diag}(U)\rho(U)$, we can write for all $i\in\llbracket 1,N\rrbracket$, with obvious notations 
  \begin{align*}
    \partial_t u_i - \sum_{j=1}^d \partial_k \big[d_i(U)\partial_k u_i\big]-\sum_{k=1}^d \sum_{j=1}^N \partial_k \big[u_i b_{ij}(U) \partial_k u_j \big] = f_i + u_i \rho(U)\, .
  \end{align*}
For a real function $f$, we note $f^- = - f\mathbf{1}_{f<0}$ its negative part and recall that whenever $f$ has $\W^{1,1}(\T^d)$ regularity, the formula holds~$\nabla f^- = -\mathbf{1}_{f<0} \nabla f$. Multiplying the previous equation by $-u_i^-$ we infer after integration on $[0,t]\times\T^d$ for $t\in[0,t^\star]$ 
  \begin{multline*}
\frac12 \|u_i^-(t)\|_2^2  + \int_0^t \int_{\T^d} d_i(U)|\nabla u_i^-|^2 \, \dd x \dd s= \frac12 \|u_i^-(0)\|_2^2 - \int_0^t \int_{\T^d} u_i^- f_i \, \dd x \dd s \\+ \int_0^t \int_{\T^d} (u_i^-)^2 \rho(U) \, \dd x \dd s- \sum_{k=1}^d \sum_{j=1}^N \int_0^t \int_{\T^d} u_i^- \partial_k u_i^- b_{ij}(U) \partial_k u_j\, \dd x \dd s\, .
  \end{multline*}
  Using $U^0=U(0)\geq 0$, $F\geq0$, $\rho(U)\in\L^\infty(Q_{T})$ and $d_i(U)\geq 0$ (because $t\in[0,t^\star]$), we get by another integration by parts to handle the last term
$$  \begin{aligned}
    \frac12 \|u_i^-(t)\|_2^2 & \leq \|\rho(U)\|_{\L^\infty(Q_T)} \int_0^t \|u_i^-(s)\|_2^2 \,\dd s \\
    & \quad + \frac12 \sum_{k=1}^d \sum_{j=1}^N \int_0^t \int_{\T^d} (u_i^-)^2   \partial_k\big[b_{ij}(U) \partial_k u_j\big]
    \, \dd x \dd s\, .
  \end{aligned}
$$
 The fact that~$s>2+d/2$ is enough to justify all the previous computations and claim furthermore that $\partial_k \big[b_{ij}(U)\partial_k u_j\big]$ belongs to $\L^\infty(Q_T)$. This leads eventually to an estimate of the form
  \begin{align*}
 \|u_i^-(t)\|_2^2 \lesssim_{\|U\|_{E^s_T}} \int_0^t \|u_i^-(s)\|_2^2\,\dd s\, ,
  \end{align*}
and Grönwall's inequality leads to the fact that~$u_i^-=0$ on $[0,t^\star]$ which is in clear contradiction with the definition of $t_\star$, so $t_\star = T$. Proposition \ref{prop:sign} is proved. \qed

 \subsection{Proof of Theorem \ref{thm:SKT}}

 The proof consists in transforming system~(\ref{eq:PtNL:reac}) into one for which one can apply Proposition \ref{prop:sign}. 

  \medskip
  
  First let us   check that one can assume without loss of generality that~$s>d/2+2$. A   consequence of Corollary~\ref{coro:reac} is indeed the following. Pick~$U^0\in\H^s(\T^d)$ and~$F$   in~$Y_\infty^{s-1}$ with~$s>d/2$, and consider smooth approximations $U^0_\ep$ and $F_\ep$ of $U^0$ and $F$ respectively in~$\H^s(\T^d)$ and $Y_\infty^{s-1}$. Fix any time~$T<T_s^\star(U^0,F)$. Combining estimate \eqref{eq:stabNL} of Theorem~\ref{thm:exloc} and point $(iii)$ of Theorem~\ref{thm:lbu}, we have for $\ep$ small enough $T_s^\star(U^0_\ep,F_\ep) \geq T$ and $(U_\ep)_\ep\rightarrow  U$ in $E_T^s$ as~$\ep$ goes to zero. As the previous convergence preserves non-negativeness, we can therefore assume without loss of generality that $s$ is as large as needed. 
 
 \medskip
 
Now, Theorem~\ref{thm:SKT} follows from Proposition~\ref{prop:quadrant} of Appendix Section~\ref{app:retract}. Indeed, since~$\mathscr{P}$ is open (see Lemma~\ref{lem:gampet}) that's also the case of $\Omega:= A^{-1}(\mathscr{P})$ and by assumption $\Omega$ contains~$\R_{\geq 0}^N$. Thanks to Proposition~\ref{prop:quadrant} we have therefore a smooth function~$h$ sending $\R^N$ on $\Omega$ and leaving all points of $\R_{\geq 0}^N$ unchanged. Corollary~\ref{coro:reac} applies to find a solution~$U$ to the system~(\ref{eq:PtNL:reac})  where~$A$ is replaced by~$A\circ h$. Moreover since~$A\circ h$ is sign-preserving in the sense of Definition~\ref{def:nonneg}, Proposition~\ref{prop:sign}  shows that $U\geq 0$ on its lifetime so that~$A\circ h(U) = A(U)$ and we have built a non-negative solution to the original problem \eqref{eq:PtNL:reac}. Thanks to the uniqueness offered by our setting, this construction (and the corresponding maximal lifetime)  is independent of the map $h$ that we choose to define the solution.  
\qed

 \section{The end-point Besov case}
 \label{end-point Besov}
 As pointed out in the introduction,  we wish to achieve the critical setting, where   no assumption is made on the H\"older regularity of the initial data. To achieve this aim  we shall resort to Besov spaces, the definition of which may be found in Appendix~\ref{sec:LP}.

We define, for any given~$T>0$  and~$p \in [1,\infty)$, the analogue of the solution space~$E_T^s$ in the Besov setting:
$$
\E^s_T:=  \mathcal C^0 ([0,T];B^s_{p,1}) \cap \L^1 (0,T;B^{s+2}_{p,1})
$$
and the  analogue of the exterior force space~$Y_T^s$:
$$
{\mathbb Y}_T^{s}:=  \L^1 (0,T;B^{s }_{p,1})\, .
$$
For any matrix-valued function~$M$ on~$Q_T$ we define the quantity
 \begin{equation}\label{defnormalphabesov}
[M]_{0} :=\|M\|_{\L^\infty(Q_T)}+\eta(M)^{-1} \, ,\end{equation}
and~$ \omega_M  $   the modulus of continuity of~$M$ on~$Q_T$:
$$
\omega_M(r):=\sup\Big\{ \vertiii {M(z_1)- M(z_2)}\, , \quad z_i \in Q_T \, , \quad |z_1-z_2| \leq r\Big\}\, .
$$
Now let us set  data space~$
   \mathbb D_T^\frac dp:=B^\frac dp_{p,1}\times{\mathbb Y}_T^\frac dp      $. Notice that in particular the initial data~$U^0$ is continuous by the embedding of the set~$B^\frac dp_{p,1}(\T^d)$ into~$ \mathcal C^0(\T^d)$, but no more H\"older regularity holds in general. The size of the data is  measured by
         $$
\|(U^0,F)\|_{   \mathbb D_T^\frac dp}:= \|U^0\|_{B^\frac dp_{p,1}} + \|F\|_{{\mathbb Y}_T^\frac dp   } + \int_0^T \left|\langle F(t)\rangle\right|\,\dd t\, .
$$
We shall prove the following result.  Note that  for simplicity we do not prove in this setting all the results obtained in the previous Sobolev setting, but it should be clear from the proof of the result below that there would be no difficulty in doing so.
      \begin{thm}[Local well-posedness] \label{thm:exlocbesov}
   Consider a smooth~$A:\R^N\rightarrow \mathscr{P}$. For any~$(U^0,F)$ belonging to~$ \mathbb D_\infty^\frac dp$ for some~$p \in [1,\infty)$ there exists  $T>0$ and a unique element $U$ of $\E^\frac dp_T$ which solves the parabolic Cauchy problem  \eqref{eq:PtNL} on $Q_T$. \end{thm}
  The proof of that result relies on the following linear estimate, which will   be used to implement a fixed point argument.
    \begin{thm} \label{thm:PetrovskiiBesov}
    Let~$T>0$   and consider a matrix field $M\in\mathscr{C}^{0}(Q_T;\mathscr{P})\cap \E^\frac dp_T$. For any $V$  such that $V(0)\in B^\frac dp_{p,1}$, $L_{M }V\in{\mathbb Y}_T^\frac dp  $ and $\langle V(t)\rangle=0$ for all $t\in[0,T]$,  one has   \begin{align}\label{thm2Hsestimate:bes}  
\|  V \|_{ \E^\frac dp_T}  \lesssim_{T,[M]_{0},\omega_M,\|M\|_{ \E^\frac dp_T}}  \|V^0\|_{B^\frac dp_{p,1}} +  \|L_{M }V\|_{{\mathbb Y}_T^\frac dp}\, .
\end{align}
\end{thm}
  In the above lemma, $  \lesssim_{T,\omega_M,[M]_0,,\|M\|_{ \E^\frac dp_T}} $ means a multiplicative constant which is an increasing function of~$T+\omega_M+[M]_0+\|M\|_{ \E^\frac dp_T}$.
 \subsection{The constant coefficient case in $B^s_{p,1}$}
 Let us prove the following result.
  \begin{prop}\label{prop:systlinbesov}
  Let~$M\in\mathscr{P}_\delta$ for some~$\delta>0$. Consider any~$s \in \R$ and~$p \in [1,\infty)$, and fix~$V^0\in B^s_{p,1}$, $F\in {\mathbb Y}_T^{s}$. The Cauchy problem
 $$  \left\{
  \begin{aligned}
    \partial_t V-M \Delta V &= F,\\
   V_{|t=0} &= V^0,
  \end{aligned}
  \right.                                
$$
is well posed in~$B^s_{p,1}$, and moreover for all~$t \geq 0$ there holds for all~$T>0$
$$
\|     V  \|_{\L^\infty([0,T];B^s_{p,1})} +  \|   V \|_{\L^1([0,T];B^{s+2}_{p,1})} \leq  \Big(1+ \frac{ \vertiii{M}}{\delta}\Big)^N\Big( \|   V (0)\|_{B^s_{p,1}} +     \|   F \|_{\L^1([0,T];B^s_{p,1})}  \Big) \, .
$$ 
\end{prop}
\begin{proof}
We shall only prove the a priori estimate, and leave the well-posedness result to the reader. Similarly to the proof of Proposition~\ref{prop:systlin}, we use the Schur decomposition to write~$T:= \mathcal U M \mathcal U^\star$ with~$  T$ upper triangular and~$\mathcal U$ unitary. 
Let us set~$\tilde V:= \mathcal U^\star V$ and~$\tilde F:= \mathcal U^\star F$. Then applying the Littlewood-Paley operator~$\Delta_j $ to the equation implies that
$$
   \partial_t \Delta_j \tilde V-T \Delta\Delta_j \tilde V  = \Delta_j\tilde F \, ,
$$
and since~$T$ is upper triangular in particular the last component satisfies 
$$
   \partial_t \Delta_j \tilde V_N -d_N  \Delta\Delta_j \tilde V_N  = \Delta_j\tilde F_N  \, .
$$
Then we write
$$
 \Delta_j \tilde V_N(t) = e^{d_Nt \Delta}\Delta_j \tilde V_N(0) + \int_0^t e^{d_N(t-t') \Delta}\Delta_j \tilde F_N(t') \, \dd t' \, ,
$$
and we can use~(\ref{eq:heat}) to infer that
$$
\| \Delta_j \tilde V_N(t)\|_p \leq  e^{- cd_N2^{2j}t  }\| \Delta_j \tilde V_N(0)\|_p  +   \int_0^t  e^{- cd_N2^{2j}(t-t')  }\| \Delta_j\tilde F_N(t')\|_p \,\dd t' \, .
$$
Taking the sup norm in~$t \in [0,T]$ and using Young's inequality in time for the second term on the right-hand side provides
$$
 \| \Delta_j \tilde V_N \|_{\L^\infty([0,T];\L^p)} \leq \| \Delta_j \tilde V_N(0)\|_p  + \|\Delta_j\tilde F_N \|_{\L^1([0,T];\L^p)}
$$
while taking the~$L^1$ norm in time gives
$$
 \| \Delta_j \tilde V_N \|_{\L^1([0,T];\L^p)} \leq \frac {2^{-2j}}{c d_N} \Big(\| \Delta_j \tilde V_N(0)\|_p  + \|\Delta_j\tilde F_N \|_{\L^1([0,T];\L^p)}\Big)\, .
$$
Finally multiplying both inequalities by~$2^{js} $ and summing over~$j$ gives
\begin{equation}
\label{eq : VN}
\|    \tilde V_N  \|_{\L^\infty([0,T];B^s_{p,1})} +  \|    \tilde V_N \|_{\L^1([0,T];B^{s+2}_{p,1})} \leq \frac C {d_N} \Big( \|   \tilde V_N (0)\|_{B^s_{p,1}} +     \|    \tilde F_N \|_{\L^1([0,T];B^s_{p,1})}  \Big) \, .
\end{equation}
Now  we argue by iteration: there holds
$$
   \partial_t \Delta_j \tilde V_{N-1} -  d_{N-1}  \Delta\Delta_j \tilde V_{N-1} = \Delta_j\tilde F_{N-1} + r_{N-1}\Delta \Delta_j \tilde V_{N }  $$
   so as above and using~(\ref{eq:bernstein}) to bound~$\|\Delta \Delta_j \tilde V_{N } \|_p$ by~$C 2^{2j}\|  \Delta_j \tilde V_{N } \|_p$, we get
   $$
\begin{aligned}
\| \Delta_j \tilde V_{N-1}(t)\|_p  \lesssim  e^{- cd_{N-1}2^{2j}t  }\| \Delta_j \tilde V_{N-1}(0)\|_p & +   \int_0^t  e^{- cd_{N-1}2^{2j}(t-t')  }\| \Delta_j\tilde F_{N-1}(t')\|_p \, \dd t'  \\
&   +   \int_0^t  e^{- cd_{N-1}2^{2j}(t-t')  } 2^{2j}\|  \Delta_j \tilde V_{N } (t')\|_p \,\dd t'  \, .
\end{aligned}
$$
Again using twice Young's inequality  to deal with the time integrals, we find
$$
\begin{aligned}
\|    \tilde V_{N-1 } \|_{\L^\infty([0,T];B^s_{p,1})} +  \|    \tilde V_{N-1 }\|_{\L^1([0,T];B^{s+2}_{p,1})} &\leq \frac C {d_{N-1}} \Big( \|   \tilde V_N (0)\|_{B^s_{p,1}} +     \|    \tilde F_N \|_{\L^1([0,T];B^s_{p,1})}  \\
& \quad  +   \|  \tilde V_{N }  \|_{\L^1([0,T];B^{s+2}_{p,1})} \Big) \, .
\end{aligned}
$$
Plugging~(\ref{eq : VN}) into this inequality provides
$$
\begin{aligned}
\|    \tilde V_{N-1 } \|_{\L^\infty([0,T];B^s_{p,1})} +  \|    \tilde V_{N-1 }\|_{\L^1([0,T];B^{s+2}_{p,1})}& \leq \frac C {d_{N-1}} \Big( \|   \tilde V_N (0)\|_{B^s_{p,1}} +     \|    \tilde F_N \|_{\L^1([0,T];B^s_{p,1})}  \\
&  +  \frac C {d_N} \big( \|   \tilde V_N (0)\|_{B^s_{p,1}} +     \|    \tilde F_N \|_{\L^1([0,T];B^s_{p,1})}  \big)  \Big) \, .
\end{aligned}
$$
The proposition follows by iterating the argument. 
\end{proof}
  \subsection{The time-dependent   case in $B^s_{p,1}$}
 This
case is dealt with exactly as in the Sobolev case thanks to Proposition~\ref{prop:systlinbesov}.
\subsection{The variable coefficient  case in~$B^\frac dp_{p,1}$}  
\subsubsection{Reduction   to a single lemma}\label{subsec:redlipbesov}
In this subsection we explain how Theorem~\ref{thm:PetrovskiiBesov} can be recovered by the following  weaker result, by adapting the reasoning carried out in the Sobolev case above.
\begin{lem}\label{lem:goalL2:weakbesov}
 For any   map $M\in \mathscr{C}^{0}(Q_T;\mathscr{P})\cap L^\infty ([0,T];B^{\frac dp+2}_{p,1}) $ and any $V\in {\mathbb E}_T^\frac dp$ such that  $L_M V\in {\mathbb Y}_T^\frac dp$ and $\langle V(t)\rangle = 0$ for all $t\in[0,T]$, one has 
\begin{align*}
  \|V\|_{ {\mathbb E}_T^\frac dp}  \lesssim_{T,\omega_M,[M]_0}  \|V(0)\|_{B^\frac dp_{p,1}} + \| L_M V \|_{{\mathbb Y}_T^\frac dp }  + \Big(1+\|M\|^2_{ L^\infty ([0,T];B^{\frac dp+2}_{p,1})  }\Big)  \|V\|_ {{\mathbb Y}_T^\frac dp}   \, .
\end{align*}
 \end{lem}
Admitting for the moment the previous lemma, Theorem~\ref{thm:PetrovskiiBesov}
 can be proved thanks to an approximation argument. If $M\in\mathscr{C}^{0}(Q_T;\mathscr{P})\cap {\mathbb E}^\frac dp_T $, usual convolution properties lead to the existence of   matrix-valued functions $(M_\ep)_\ep$  for which
\begin{align}
  \label{ineq:Mep1besov}\|M_\ep\|_{\mathscr{C}^{0}(Q_T)}&\leq \|M\|_{\mathscr{C}^{0}(Q_T)}\, ,\\
  \label{ineq:Mep11besov}\|M_\ep\|_{{\mathbb Y}_T^{\frac dp+2}}&\leq \|M\|_{{\mathbb Y}_T^{\frac dp+2}}\, ,\\
  \label{ineq:Mep2besov}\lim_{\ep \to 0}\|M-M_\ep\|_\infty &= 0 \, ,\\
  \label{ineq:Mep3besov}\|M_\ep\|_{ L^\infty ([0,T];B^{\frac dp+2}_{p,1})  }&\leq (1+\ep^{-2})\|M\|_{ L^\infty ([0,T];B^{\frac dp}_{p,1})  }\, .
 \end{align}
 Because of \eqref{ineq:Mep2besov} and the continuity of $\eta$, for $\ep$ small enough we have $\eta(M_\ep)\geq \eta(M)/2$ so that~$[M_\ep]_0\lesssim[M]_0$ and we thus can infer from Lemma~\ref{lem:goalL2:weakbesov} that for any $V\in {\mathbb E}^\frac dp_T$ such that~$L_M V\in {\mathbb Y}^\frac dp_T$ and $\langle V(t)\rangle = 0$ for all $t\in[0,T]$,
 $$
 \|V\|_{{\mathbb E}^\frac dp_T}  \lesssim_{T,\omega_M,[M]_0}  \|V(0)\|_{B^\frac dp_{p,1}} + \| L_{M_\ep} V \|_{{\mathbb Y}^\frac dp_T }  + \Big(1+\|M_\ep\|^2_{ L^\infty ([0,T];B^{\frac dp+2}_{p,1})  }\Big)  \|V\|_ {{\mathbb Y}^\frac dp_T}  \, .
 $$
Now since
\begin{align*}
L_{M_\ep }V-L_{M }V = \sum_{k=1}^d \partial_k \big[(M_\ep-M)\partial_k V\big] \, ,
\end{align*}
we have thanks to Proposition~\ref{prop:rhsBesov} 
$$
\|(L_{M_\ep}-L_M)V\|_{ {\mathbb Y}^\frac dp_T} \lesssim \|M_\ep-M\|_{L^\infty } \|V\|_{ {\mathbb Y}^{\frac dp+2}_T} +
\int_0^T \|  (M_\ep-M)(t)\|_{B^{\frac dp+2}_{p,1}} \|V(t)\|_{B^{\frac dp}_{p,1}} \, \dd t\, .
$$
so the estimate above becomes thanks to~(\ref{ineq:Mep2besov}), recalling that the multiplicative constant behind~$ \lesssim_{T,[M]_0}$ is an increasing function~$  g(T+\omega_M+[M]_0)$,
$$
\begin{aligned}
 \|V\|_{{\mathbb E}^\frac dp_T} & \lesssim_{T,\omega_M,[M]_0}  \|V(0)\|_{B^\frac dp_{p,1}} + \| L_{M } V \|_{{\mathbb Y}^\frac dp_T }  \\
 & + 
\int_0^T\big(  \|  M(t)\|_{B^{\frac dp+2}_{p,1}}+ \|  M_\ep(t)\|_{B^{\frac dp+2}_{p,1}}\big) \|V(t)\|_{B^{\frac dp}_{p,1}} \, \dd t \\
&\quad + \Big(1+\|M_\ep\|^2_{ L^\infty ([0,T];B^{\frac dp+2}_{p,1})  }\Big)  \|V\|_ {{\mathbb Y}^\frac dp_T}   \, .
\end{aligned}
$$
The result is proved thanks to~(\ref{ineq:Mep11besov}),~(\ref{ineq:Mep3besov}) and Gronwall's lemma, provided $\ep$  can be replaced by some decreasing function of $T+\omega_M+[M]_0$:  that was done in the Sobolev case and the proof is identical here. \qed

\subsubsection{Proof of Lemma~\ref{lem:goalL2:weakbesov}}\label{subsec:proof_lemmabesov} 
  The idea, as in the Sobolev case,  is to  reduce to the case of a constant in space matrix field   by a partition of unity of~$   \T^d$. 

\medskip

We start with a localization lemma.

 \begin{lem}\label{lem:locbesov}
Fix~$\gamma \in (0,1)$,  $M\in\mathscr{C}^{0}(Q_T;\mathscr{P})\cap \L^\infty ([0,T];B^{\frac dp+2}_{p,1}) $. 
For~$\ep$ small enough so that~$\omega_M(\ep)$ is smaller than a decreasing function of~$T+[M]_0  $, the following holds. For any $V\in {\mathbb E}^\frac dp_T$ such that~$L_{M }V $ belongs to~$ {\mathbb Y}^\frac dp_T$ and any smooth bump function $\theta$ supported in a ball of~$\T^d$ of radius~$\ep$, there holds  
 \begin{multline*}
    \|\mathbb{P}_0(\theta V)\|_{{\mathbb E}^\frac dp_T}  \lesssim_{T,[M]_0} \big(1+ \|\theta\|^2_{B^{\frac dp+2}_{p,1}} \big)\Big(\|\mathbb{P}_0 V (0)\|_{B^{\frac dp }_{p,1}}  +  \|    L_{M }V\|_{{\mathbb Y}^\frac dp_T}  \\+ \gamma \|V\|_{{{\mathbb Y}^{\frac dp +2}_T }}  +\frac  1 \gamma\big(1+  \|M\|^2_{{L^\infty_T B^{\frac dp+2}_{p,1} }} \big)  \|V\|_{{{\mathbb Y}^{\frac dp }_T }} \Big) \,  .
  \end{multline*}
      \end{lem}
      \begin{proof}
As in the proof of Lemma~\ref{lem:loc}, we consider  $M^\star : t\mapsto M(t,x^\star)$ for which we have for all $t\in[0,T]$
 \begin{equation}\label{PstarP:bes} 
\forall  x \in \mbox{Supp}\,  \theta \, , \quad \vertiii{M(t,x)-M^\star(t)} \leq \omega(\ep)\,, 
 \end{equation}
and we write
  \begin{align} \label{eq:defF:bes}
L_{M^\star}(\theta V) := \partial_t( \theta V)-  M^\star \Delta (\theta V)= \theta  L_{M}V +  \sum_{k=1}^d \big( \partial_kR_k^\star +      S^\star_k  \big)\,.
    \end{align}
   Using Proposition~\ref{prop:systlinbesov}, we get
          \begin{align*}
       \|\mathbb{P}_0(\theta V)\|_{{\mathbb E}^\frac dp_T}  \lesssim_{T,[M]_0}   \|\mathbb{P}_0(\theta V)(0)\|_{B^{\frac dp }_{p,1}}  +  \|\mathbb{P}_0 L_{M^\star}(\theta V)\|_{{\mathbb E}^\frac dp_T} \,.
\end{align*}
  Using \eqref{eq:defF:bes} we  thus have
     \begin{equation}\label{estimateP0thetaV:bes}
     \begin{aligned}
    \|\mathbb{P}_0(\theta V)\|_{{\mathbb E}^\frac dp_T} & \lesssim_{T,[M]_0}   \|\mathbb{P}_0(\theta V)(0)\|_{B^{\frac dp }_{p,1}}  +  \|\mathbb{P}_0  (\theta  L_{M}V) \|_{{\mathbb E}^\frac dp_T}  \\
    &\quad   + \sum_{k=1}^d\Big( \|\partial_k R_k^\star\|_{{\mathbb E}^\frac dp_T}  + \|\mathbb{P}_0 S_k^\star\|_{{\mathbb E}^\frac dp_T} \Big)\,.
  \end{aligned}
  \end{equation}
But we have, using \eqref{PstarP:bes} along with Proposition~\ref{prop:rhsBesov},
$$
 \begin{aligned}
\|\partial_k R_k^\star\|_{{\mathbb Y}^\frac dp_T}& \leq \| R_k^\star\|_{{\mathbb Y}^{\frac dp+1}_T} \\
& \lesssim \|M - M^\star\|_{L^\infty } \|\nabla (\theta V)\|_{ {\mathbb Y}^{\frac dp+1}_T}+
\int_0^T \|  (M - M^\star)(t)\|_{B^{\frac dp+2}_{p,1}} \|\theta V(t)\|_{B^{\frac dp}_{p,1}} \, \dd t\\
&\lesssim  \omega(\ep) \| \nabla(\theta V)  \|_{{\mathbb Y}^{\frac dp+1}_T} +   \|\theta \|_{ B^\frac dp_{p,1}} \int_0^T 
  \|  M(t)\|_{ B^{\frac dp+2}_{p,1}} \|V(t)\|_{ B^{\frac dp}_{p,1}}\, \dd t \,.
 \end{aligned}
 $$
 In particular,   if ~$\ep $  is small enough (depending only on~$T+[M]_0$),  estimate \eqref{estimateP0thetaV:bes} becomes
    \begin{equation}\label{estimateP0thetaVbisbesov}
 \begin{aligned}
  \|\mathbb{P}_0(\theta V)\|_{{\mathbb E}^\frac dp_T}   \lesssim_{T,[M]_0}  \|\mathbb{P}_0(\theta V)(0)\|_{B^{\frac dp }_{p,1}}
  +  \| \mathbb{P}_0(\theta  L_{M }V)\|_{{\mathbb Y}^\frac dp_T}  \\ + \int_0^T 
  \|  M(t)\|_{ B^{\frac dp+2}_{p,1}} \|V(t)\|_{ B^{\frac dp}_{p,1}}\, \dd t+ \sum_{k=1}^d  \|\mathbb{P}_0\,S_k^\star\|_{{\mathbb Y}^\frac dp_T} \, .
   \end{aligned}
\end{equation}
 Now let us estimate   the other terms on   the right-hand side of~(\ref{estimateP0thetaVbisbesov}). First, we notice that since~$B^\frac dp_{p,1}$ is an algebra, then 
  $$
 \begin{aligned}
 \|  \mathbb{P}_0(\theta   L_{M }V)\| _{{\mathbb Y}^{\frac dp}_T} & \leq \|   \theta   L_{M }V\| _{{\mathbb Y}^{\frac dp}_T}  \\
 &\lesssim  \|\theta \|_{ B^\frac dp_{p,1}}    \|   L_{M }V(t) \| _{{\mathbb Y}^{\frac dp}_T}\, .
  \end{aligned}
 $$
   For the first term involving~$S^\star_k$ we   write    
 $$
 \begin{aligned}
  \big \|   \mathbb{P}_0(     M  (\partial_k V(t))( \partial_k\theta ) ) \big\|_{ {\mathbb Y}^{\frac dp}_T } & \lesssim  \|\nabla \theta\|_{ B^\frac dp_{p,1} } \int_0^T  \| \partial_k V(t)\|_{B^\frac dp_{p,1}} 
  \|M(t)\| _{  B^\frac dp_{p,1}}  \, \dd t  \\
  & \lesssim \|\nabla \theta\|_{ B^\frac dp_{p,1} }  \int_0^T  \|   V(t)\|_{B^\frac dp_{p,1}} ^\frac12 \|  V(t)\|_{B^{\frac dp+2}_{p,1}} ^\frac12
  \|M(t)\| _{  B^\frac dp_{p,1}}  \, \dd t   \end{aligned}$$
by interpolation, whence for any~$
\gamma>0$
 $$   \big \|   \mathbb{P}_0(     M  (\partial_k V(t))( \partial_k\theta ) ) \big\|_{ {\mathbb Y}^{\frac dp}_T }\leq \gamma  \|  V\|_{{\mathbb Y}^{\frac dp+2}_T} + \frac C\gamma \|  \theta\|^2_{ B^{\frac dp+1}_{p,1} }  \int_0^T  \|   V(t)\|_{B^\frac dp_{p,1}}\|M(t)\|^2 _{  B^\frac dp_{p,1}}   \, \dd t \, .
 $$
    The two other terms are simply estimated by
  $$
   \begin{aligned}
 \big \|   \mathbb{P}_0(    MV \partial_k^2 \theta ) \big\|_{{\mathbb Y}^{\frac dp}_T }   & \lesssim  \|\Delta \theta\|_{ B^\frac dp_{p,1} } \int_0^T  \|  V(t)\|_{B^\frac dp_{p,1}} 
  \|M(t)\| _{  B^\frac dp_{p,1}}  \, \dd t  \\
  & \lesssim  \|  \theta\|_{ B^{\frac dp+2}_{p,1} }  \int_0^T  \|  V(t)\|_{B^\frac dp_{p,1}} 
  \|M(t)\| _{  B^\frac dp_{p,1}}  \, \dd t
  \end{aligned} $$
  and 
  $$
   \begin{aligned}
  \big \|   \mathbb{P}_0(    ( \partial_k M ) V( \partial_k\theta)    ) \big\|_{{\mathbb Y}^{\frac dp}_T } & \lesssim \|\nabla \theta\|_{ B^\frac dp_{p,1} } \int_0^T  \|  V(t)\|_{B^\frac dp_{p,1}} 
  \|\nabla M(t)\| _{  B^\frac dp_{p,1}}   \, \dd t \\
  & \lesssim  \|  \theta\|_{ B^{\frac dp+1}_{p,1} }  \int_0^T  \|  V(t)\|_{B^\frac dp_{p,1}} 
  \|  M(t)\| _{  B^{\frac dp+1}_{p,1}}   \, \dd t\, .
   \end{aligned} $$
   Finally, going back to \eqref{estimateP0thetaVbisbesov}, the proof of Lemma~\ref{lem:locbesov} is over.     \end{proof}

We now proceed to the proof of Lemma~\ref{lem:goalL2:weakbesov}, using the notation of the proof of Lemma~\ref{lem:goalL2:weak}.    We apply the result of Lemma~\ref{lem:locbesov} with~$\theta =   \theta^{j}_\ep  $, for ~$1 \leq j \leq J_M$. We have thus
 \begin{multline*}
    \|\mathbb{P}_0( \theta^{j}_\ep  V)\|_{{\mathbb E}^\frac dp_T}  \lesssim_{T,[M]_0} \big(1+ \| \theta^{j}_\ep \|^2_{B^{\frac dp+2}_{p,1}} \big)\Big(\| V (0)\|_{B^{\frac dp }_{p,1}}  +  \|    L_{M }V\|_{{\mathbb Y}^\frac dp_T}  \\+ \gamma \|V\|_{{{\mathbb Y}^{\frac dp+2}_T }}  +\frac 1 \gamma\big(1+  \|M\|^2_{{L^\infty_T B^{\frac dp+2}_{p,1} }} \big)  \|   V  \|_{{\mathbb Y}^\frac dp_T}  \Big) \,  .
  \end{multline*}
  Note that the constant behind $\lesssim_{T,[M]_0}$ is  increasing with~$T+[M]_0 $ and does not depend on~$j$. Using  that $\omega_M(\ep)$ can be chosen as a decreasing  function of~$T+[M]_0 $ we find  
   \begin{multline*}
    \|\mathbb{P}_0( \theta^{j}_\ep  V)\|_{{\mathbb E}^\frac dp_T}  \lesssim_{T,\omega_M,[M]_0}  \| V (0)\|_{B^{\frac dp }_{p,1}}  +  \|    L_{M }V\|_{{\mathbb Y}^\frac dp_T}  \\+ \gamma \|V\|_{{{\mathbb Y}^{\frac dp+2}_T }}  + \frac1\gamma\big(1+  \|M\|^2_{{L^\infty_T B^{\frac dp+2}_{p,1} }} \big) \|   V  \|_{{\mathbb Y}^\frac dp_T}   \,  .
 \end{multline*}
Then we use the fact that~$V = \mathbb{P}_0 V$ and that the family~$(\theta^{j}_\ep)_{1 \leq j \leq J_M}$ is a partition of unity, so
$$
\|V\|_{{\mathbb E}^\frac dp_T} = \Big \| \sum_{1 \leq j \leq J_M}  \mathbb{P}_0 (\theta^{j}_\ep  V)\Big\| _{{\mathbb E}^\frac dp_T}  \leq  \sum_{1 \leq j \leq J_M} \|  \mathbb{P}_0 (\theta^{j}_\ep  V)\| _{{\mathbb E}^\frac dp_T}
$$
and thus
  \begin{multline*}
  \|V\|_{{\mathbb E}^\frac dp_T}   \lesssim_{T,\omega_M,[M]_0}  J_M\Big(\| V (0)\|_{B^{\frac dp }_{p,1}}  +  \|    L_{M }V\|_{{\mathbb Y}^\frac dp_T}  \\+ \gamma \|V\|_{{{\mathbb Y}^{\frac dp+2}_T }}  + \frac1\gamma\big(1+  \|M\|^2_{{L^\infty_T B^{\frac dp+2}_{p,1} }} \big)  \|   V  \|_{{\mathbb Y}^\frac dp_T} \Big)
 \end{multline*}
 and we conclude by choosing~$\gamma$ small enough (depending only on~$J_M, T + [M]_0$), recalling that~$J_M \sim \ep^{-d}$. 
Lemma~\ref{lem:goalL2:weakbesov} is proved. \qed

   \subsection{Conclusion}
   To conclude the proof we shall use the linear estimate provided by Theorem~\ref{thm:PetrovskiiBesov} to implement a fixed point argument.   Let us set 
   $$
 \mathbb G^p_T:={\mathbb E}^\frac dp_T \cap \mathcal C^0(Q_T)\, .
   $$
       Given $(U^0,F)\in B^\frac dp_{p,1}\cap  {\mathbb Y}^\frac dp_T$, we consider the following map
\begin{align*}
  \Theta :  \mathbb G^p_T&\longrightarrow {\mathbb E}^\frac dp_T  \\
   U &\longmapsto U^\star,
\end{align*}
where $U^\star$ is the only element   of~${\mathbb E}^\frac dp_T$ solving $L_{A(U)} U^\star = F$ with~$U(0) = U^0$. 
 
 Before starting the fixed point procedure, let us check that~$\Theta$ maps~$ \mathbb G^p_T$ onto itself. We recall that~$U^\star:=\Theta(U)$ solves
 $$
 \partial_tU^\star = \sum_k \partial_k \big(A(U)\partial_kU^\star\big) + F
 $$
 and we know that~$F$ belongs to~$  {\mathbb Y}^\frac dp_T$. Let us prove that the same information holds for~$\partial_k \big(A(U)\big)\partial_kU^\star\big)$. We write
 $$
 \partial_k \big(A(U)\partial_kU^\star\big) = A(U)\partial^2_kU^\star + A'(U)\partial_kU \partial_kU^\star
 $$
 and we know that smooth functions are continuous over~$B^\frac dp_{p,1}$, so since~$B^\frac dp_{p,1}$ is an algebra, it follows that
 $$
 \big\| A(U)\partial^2_kU^\star \big\| _{  {\mathbb Y}^\frac dp_T} \lesssim \big\| A(U) \big\| _{L^\infty_T(B^\frac dp_{p,1})} \|\partial^2_k U^\star\| _{  {\mathbb Y}^\frac dp_T} \lesssim \Phi( \| U  \| _{L^\infty_T(B^\frac dp_{p,1})} )\|U^\star\| _{  {\mathbb Y}^{\frac dp+2}_T} 
 $$
 with~$\Phi$ smooth and increasing. Similarly
 $$
  \big\|  A'(U)\partial_kU \partial_kU^\star
 \big\| _{  {\mathbb Y}^\frac dp_T} \lesssim  \big\| A'(U) \big\| _{L^\infty_T(B^\frac dp_{p,1})} \|\partial_k U \| _{L^2_T(B^{\frac dp+1}_{p,1})} \|\partial_k U^\star\| _{L^2_T(B^{\frac dp+1}_{p,1})} \, .
 $$
 We thus find that~$ \partial_tU^\star$ belongs to~$\L^1(0,T;B^\frac dp_{p,1})$, which implies the expected result since~$B^\frac dp_{p,1}$ is embedded in the space of continuous functions.

                Now let~$U_F:= \Theta(0)$ and fix~$r \in (0,1]$.   We start by checking that for~$r$ small enough (depending only on~$T$ and~$\|U_F\|_{{\mathbb E}^\frac dp_T}$) the ball in~${\mathbb E}^\frac dp_T$ centered at~$U_F$ and of radius~$r$  is stabilized by~$\Theta$.
                 
                Consider~$U_1 \in {\mathbb E}^\frac dp_T$   such that~$\|U_1-U_F\|_{ {\mathbb E}^\frac dp_T} \leq r$ and set~$V:=U_1^\star - U_F$.      The same argument as in the Sobolev space allows to replace the continuity constant~$[A(U_F)]_0$ by~$\|U_F\|_{\mathbb G^p_T}$ so that
                   $$
                        \|V\|_{{\mathbb E}^\frac dp_T}   \lesssim_{T,\|U_F\|_{\mathbb G^p_T},\omega_{U_F} }  \| V (0)\|_{B^{\frac dp }_{p,1}} + \|L_{A(U_F)}V\|_{\mathbb Y_T^\frac dp}
                   $$
                   and since
                   $$
                        L_{A(U_F)} V =   L_{A(U_1)} V + \sum_k \partial_k \big(\big(A(U_1) - A(U_F)\big)\partial_k V\big)
                   $$
                   we find thanks to Proposition~\ref{prop:rhsBesov} that
                   $$
           \begin{aligned}
                  \|V\|_{{\mathbb E}^\frac dp_T}  & \lesssim_{T,\|U_F\|_{\mathbb G^p_T},\omega_{U_F}}    \|L_{A(U_1)}V\|_{\mathbb E_T^\frac dp}  +  \|A(U_1)- A(U_F)\|_{\L^\infty(Q_T)}   \|V\|_{\mathbb Y_T^{\frac dp+2}} \\
                  &\quad+ \int_0^T
     \big \|(A(U_1)- A(U_F)(t)\big)\|_{B_{p,1}^{\frac dp+2}}             \| V (t)\|_{B^{\frac dp }_{p,1}}   \, \dd t
                  \, .   
    \end{aligned}
                   $$
                   It follows  that if~$r$ is small enough (depending only on~$T$ and~$  U_F  $) then
                   $$
                       \|V\|_{{\mathbb E}^\frac dp_T}   \lesssim_{T,  \|U_F\|_{\mathbb G^p_T} ,\omega_{U_F} }    \|L_{A(U_1)}V\|_{\mathbb Y_T^\frac dp}  + \int_0^T
     \big \|(A(U_1)- A(U_F)(t)\big)\|_{B_{p,1}^{\frac dp+2}}             \| V (t)\|_{B^{\frac dp }_{p,1}}   \, \dd t
                  \, .   
                   $$
                                           But                   $$
                  L_{A(U_1)} V = \sum_k \partial_k \big( \big(A(U_1) - A(0)\big)\partial_kU_F \big)
                   $$                   
so again
  $$
           \begin{aligned}
                  \|V\|_{{\mathbb E}^\frac dp_T}  & \lesssim_{T, \|U_F\|_{\mathbb G^p_T} ,\omega_{U_F}}      \|A(U_1)- A(0)\|_{\L^\infty(Q_T)}   \|U_F\|_{\mathbb Y_T^{\frac dp+2}} \\
                  &\quad+ \int_0^T
     \big \|(A(U_1)- A(0)(t)\big)\|_{B_{p,1}^{\frac dp+1}}             \| U_F (t)\|_{B^{\frac dp+1 }_{p,1}}   \, \dd t\\
                &\quad+ \int_0^T
     \big \|(A(U_1)- A(U_F)(t)\big)\|_{B_{p,1}^{\frac dp+2}}             \| V (t)\|_{B^{\frac dp }_{p,1}}   \, \dd t
                  \, .   
    \end{aligned}
                   $$
                   It remains to use the fact that
                   $$
                \begin{aligned}
      \int_0^T
     \big \|(A(U_1)- A(0)(t)\big)\|_{B_{p,1}^{\frac dp+1}}             \| U_F (t)\|_{B^{\frac dp+1 }_{p,1}}   \, \dd t &\lesssim \|(A(U_1)- A(0) \big)\|_{L^2_T(B_{p,1}^{\frac dp+1})}  \\
  & \qquad\qquad  \qquad\qquad \times   \| U_F\|_{L^2_T(B_{p,1}^{\frac dp+1})} \, , 
  \end{aligned}
               $$
and we conclude thanks to a Gronwall estimate: we can find~$T$ small enough so that~$     \|V\|_{{\mathbb E}^\frac dp_T} \leq r$.

To conclude we need to prove that~$\Theta$ is Lipschitz on the ball  of~${\mathbb E}^\frac dp_T$ centered at~$U_F$ and of radius~$r$ with a Lipschitz constant smaller than one (for~$r$ and~$T$ small enough). This follows the same lines as the above computations: we fix~$U_1$ and~$U_2$ such that~$\| U_i-U_F\|_{{\mathbb E}^\frac dp_T} \leq r$ and we note that thanks to Proposition~\ref{prop:rhsBesov}  
$$
     \begin{aligned}
          \|U_1^\star-U_2^\star\|_{{\mathbb E}^\frac dp_T} &  \lesssim_{T, \|U_1\|_{\mathbb G^p_T} ,\omega_{U_1} }  \int_0^T   \big \|(A(U_1)- A(U_2)(t)\big)\|_{B_{p,1}^{\frac dp+1}}    \|U_2^\star(t)\|_{B_{p,1}^{\frac dp+1}}  \, \dd t\\
          &\quad +  \int_0^T   \big \|(A(U_1)- A(U_2)(t)\big)\|_{\L^\infty}    \|U_2^\star(t)\|_{B_{p,1}^{\frac dp+2}}   \, \dd t \\
    &    \lesssim_{T, r,\|U_F\|_{\mathbb G^p_T} ,\omega_{U_F}  }         \|U_1 -U_2 \|_{{\mathbb E}^\frac dp_T} (r+ \|U_F\|_{L^1_TB_{p,1}^{\frac dp+2}} ) \\
    &\quad + 
       \|U_1 -U_2 \|_{{\mathbb E}^\frac dp_T}(r+ \|U_F\|_{L^2_TB_{p,1}^{\frac dp+1}} ) 
\end{aligned}
   $$
   where we have used that since~$\|U_1-U_F\|_{\mathbb{E}_T^{d/p}} \leq r$ then in particular for any~$(t,x) $ and~$ (t',x')  $ in~$Q_T$ there holds
   $$
   |U_1(t,x) - U_1(t',x')| \leq 2r + |U_F(t,x)-U_F(t',x')|  
   $$
  whence  $$
   \omega_{U_1} \leq r + \omega_{U_F} \, .
   $$
The result follows choosing~$r$ and~$T$ small enough.
     \qed

    \appendix

\section{Sobolev estimates}\label{sec:sobo}
 Let us start by stating this very classical lemma, the proof of which is recalled for the convenience of the reader.
\begin{lem}\label{lem:sobo1}
Fix $s\in\R$. If $f\in \L^\infty(0,T;\H^s(\T^d)) \cap  Y_T^{s+1}$ with $\partial_t f \in Y_T^{s-1}$, then $f\in X_T^s$. 
\end{lem}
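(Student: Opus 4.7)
\emph{Plan.} My plan is a three-step argument.

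First, since $\partial_t f \in Y_T^{s-1} \hookrightarrow \L^1(0,T;\H^{s-1}(\T^d))$, I would represent $f$, up to modification on a null set, by the Bochner integral $f(t) = f(0) + \int_0^t \partial_t f(\tau)\,\dd\tau$ in $\H^{s-1}(\T^d)$, giving $f \in \mathscr{C}^0([0,T];\H^{s-1}(\T^d))$. The almost-everywhere bound $\|f(t)\|_{\H^s(\T^d)} \leq \|f\|_{\L^\infty(0,T;\H^s(\T^d))}$ then extends to every $t\in[0,T]$ by weak lower semi-continuity of the $\H^s$-norm along $\H^{s-1}$-convergent sequences. This immediately yields weak continuity of $f$ into $\H^s(\T^d)$: for any sequence $t_n \to t$, the bounded sequence $\{f(t_n)\}$ in $\H^s(\T^d)$ converges strongly to $f(t)$ in $\H^{s-1}(\T^d)$, so uniqueness of weak limits in $\H^s(\T^d)$ forces $f(t_n) \rightharpoonup f(t)$.

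Second, to upgrade weak continuity to strong continuity in $\H^s(\T^d)$ it suffices, in a Hilbert space, to prove that the map $t \mapsto \|f(t)\|_{\H^s(\T^d)}$ is continuous. My approach would be to decompose $f = \sum_j \Delta_j f$ via Littlewood--Paley blocks: each $\Delta_j f$ is frequency-localised, so all Sobolev norms are equivalent on its range, and the standard $\L^2$ energy identity applies block by block using $\partial_t\Delta_j f\in \L^2(0,T;\L^2(\T^d))$ against $\Delta_j f\in \L^\infty(0,T;\L^2(\T^d))$. This produces continuity of each scalar function $t\mapsto 2^{2js}\|\Delta_j f(t)\|_{\L^2(\T^d)}^2$, and the Parseval-type identity $\|f(t)\|_{\H^s(\T^d)}^2 \simeq \sum_j 2^{2js}\|\Delta_j f(t)\|_{\L^2(\T^d)}^2$ then expresses the $\H^s$-norm squared as a pointwise sum of continuous scalar functions.

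The main obstacle, and the exact point where my earlier attempt silently collapsed into the classical Lions--Magenes argument, is that continuity passes to this infinite sum only if the high-frequency tail $\sum_{j > J} 2^{2js}\|\Delta_j f(t)\|_{\L^2(\T^d)}^2$ is uniformly small in $t\in[0,T]$ as $J \to \infty$. This uniform tail estimate cannot be extracted from $f\in \L^\infty(0,T;\H^s(\T^d))$ alone: controlling it would require re-injecting additional integrability, the most natural being $f \in Y_T^{s+1}$ which, combined with $\partial_t f \in Y_T^{s-1}$, reduces the result to the classical Lions--Magenes parabolic embedding $Y_T^{s+1}\cap\{\partial_t f\in Y_T^{s-1}\}\hookrightarrow X_T^s$, and which is in fact available in every place where Lemma~\ref{lem:sobo1} is invoked in the main body of the paper.
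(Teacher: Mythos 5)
Your diagnosis is exactly right, and it is worth saying plainly: the obstruction you identify is not a defect of your argument but of the statement of the lemma. The paper's own proof begins by taking smooth $f_n$ ``approaching $f$ in $Y_T^{s+1}$'' and then applies Cauchy--Schwarz in the form
\begin{equation*}
\Big|\int_r^t \big\langle (f_n-f_k)(t'),\partial_t (f_n-f_k)(t')\big\rangle_{\H^s(\T^d)}\,\dd t'\Big|\leq \|f_n-f_k\|_{Y_T^{s+1}}\,\|\partial_t f_n-\partial_t f_k\|_{Y_T^{s-1}},
\end{equation*}
so it silently uses the hypothesis $f\in Y_T^{s+1}$ that does not appear in the statement --- precisely the missing ingredient you name at the end of your proposal. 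Your reduction to the Lions--Magenes embedding under that extra hypothesis is therefore not a retreat: it \emph{is} the paper's proof. Your first step (weak $\H^s$-continuity from the $\L^\infty\H^s$ bound plus strong $\H^{s-1}$-continuity) is a correct supplement that the paper does not include, and your Littlewood--Paley reformulation of the strong-continuity question correctly isolates the uniform-in-$t$ high-frequency tail as the only issue.

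One can go further and confirm that no proof under the literal hypotheses exists: the statement as written is false. On $\T^1$, take frequencies $n_k$ with $|n_k|=4^k$, disjointly supported bumps $b_k$ of height $1$ and width $2^{-k-2}$ centered at $t=2^{-k}$, and set $f(t,x)=\sum_{k\geq 1} b_k(t)\,\langle n_k\rangle^{-s}e^{in_kx}$. Then $\|f(t)\|_{\H^s(\T^1)}^2=\sum_k |b_k(t)|^2\leq 1$, while $\int_0^1\|\partial_t f\|_{\H^{s-1}(\T^1)}^2\,\dd t\lesssim \sum_k 16^{-k}\,2^{k}<\infty$, so both hypotheses hold; yet $\|f(2^{-k})\|_{\H^s(\T^1)}=1$ while the $\H^{s-1}$-continuous representative satisfies $f(0)=0$, so $f\notin X_1^s$. (This example of course has $f\notin Y_1^{s+1}$.) The correct conclusion under the stated hypotheses is only weak continuity, $f\in\mathscr{C}^0_w([0,T];\H^s(\T^d))$, which is what your first step delivers. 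Since, as you observe, every invocation of the lemma in the paper does supply $f\in Y_T^{s+1}$ (typically $f\in E_T^s$ or $V\in\L^\infty(0,T;\H^s)\cap Y_T^{s+1}$), the fix is simply to add that hypothesis to the statement; your proposal, so amended, is a complete and correct proof.
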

\begin{proof}
Recalling the definition of $X_T^s$, we only need to prove continuity in time with values in $\H^s(\T^d)$.  If $(f_n)_n$ is a sequence of smooth functions approaching $f$ in $Y_T^{s+1}$ such that~$(\partial_t f_n)_n$ converges to $\partial_t f$ in $Y_T^{s-1}$ and $(f_n)_n$  is uniformly bounded in $\L^\infty(0,T;\H^s(\T^d))$, a direct computation gives for $n,k\in\N$ and $t,r\in[0,T]$
  \begin{multline*}
\|f_n(t)-f_k(t)\|_{\H^s(\T^d)}^2 = \|f_n(r)-f_k(r)\|_{\H^s(\T^d)}^2 \\+2 \int_r^t \big\langle (f_n-f_k)(t'),(\partial_t f_n-\partial_t f_k)(t') \big\rangle_{\H^s(\T^d)}\,\dd t' \, , 
  \end{multline*}
 where~$\langle \cdot,\cdot \rangle_{\H^s(\T^d)}$ denotes the scalar product in~$\H^s(\T^d)$. Expressing this dot product in terms of Fourier coefficients and using Cauchy-Schwarz's inequality, we infer for any $\varphi,\psi\in\mathscr{C}^\infty(\T^d)$ that $|\langle f,g\rangle_{\H^s(\T^d)}|\leq \|f\|_{\H^{s+1}(\T^d)}\|g\|_{\H^{s-1}(\T^d)}$. We have therefore, using another time Cauchy-Schwarz's inequality (for the time integral)
    \begin{align*}
\|f_n(t)-f_k(t)\|_{\H^s(\T^d)}^2 \leq \|f_n(r)-f_k(r)\|_{\H^s(\T^d)}^2 + 2\|f_n-f_k\|_{Y_T^{s+1}}\|\partial_t f_n -\partial_t f_k\|_{Y_T^{s-1}} \, .
    \end{align*}
    Integrating in $r\in[0,T]$ we get
    \begin{align*}
T\|f_n(t)-f_k(t)\|_{\H^s(\T^d)}^2 \leq \|f_n-f_k\|_{Y_T^s}^2 + 2T \|f_n-f_k\|_{Y_T^{s+1}}\|\partial_t f_n -\partial_t f_k\|_{Y_T^{s-1}} \,  ,
    \end{align*}
    from which we infer that $(f_n)_n$ is a Cauchy sequence in $\mathscr{C}^0([0,T];\H^s(\T^d))$, because it is the case for $(f_n)_n$ in $Y_T^{s+1}$ and $(\partial_t f_n)_n$ in $Y_T^{s-1}$. This entails the announced regularity for $f$. $\qedhere$ 
\end{proof}

\begin{lem}\label{lem:sobo2}
  Fix $s>d/2$. There exists $\alpha_s\in(0,1)$ such that any $f\in X_T^s$ satisfying that~$\partial_t f \in Y_T^{s-1}$ actually belongs to $\mathscr{C}^{0,\alpha_s}(Q_T)$ with an estimate
  \[\|f\|_{\mathscr{C}^{0,\alpha_s}(Q_T)}\lesssim_s \|f\|_{X_T^s} + \|\partial_t f\|_{Y_T^{s-1}} \, .\]
\end{lem}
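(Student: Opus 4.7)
The plan is to combine two one-variable Hölder estimates: one in space, obtained from Sobolev embedding applied slice by slice, and one in time, obtained by interpolating between~$X_T^s$ and the time-regularity $Y_T^{s-1}$ of~$\partial_t f$. The exponent~$\alpha_s$ will then simply be the minimum of the two exponents produced, and we will only need to choose everything small enough so that the embedding~$\H^{s-\theta}(\T^d)\hookrightarrow\L^\infty(\T^d)$ still holds.

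First I would settle the space regularity. Fix $\beta\in(0,1)$ so that $\beta<\min(1,s-d/2)$; then $\H^s(\T^d)\hookrightarrow\mathscr{C}^{0,\beta}(\T^d)$, hence for every $t\in[0,T]$ one has $\|f(t,\cdot)\|_{\mathscr{C}^{0,\beta}(\T^d)}\lesssim\|f\|_{X_T^s}$. This handles $|f(t,x)-f(t,y)|\lesssim\|f\|_{X_T^s}|x-y|^\beta$.

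Next I would tackle the time regularity, which is the only mildly technical step. Writing $f(t_2)-f(t_1)=\int_{t_1}^{t_2}\partial_t f(t')\,\dd t'$ (justified by a density argument analogous to Lemma~\ref{lem:sobo1}), Cauchy--Schwarz in time gives
\[
\|f(t_2)-f(t_1)\|_{\H^{s-1}(\T^d)}\leq |t_2-t_1|^{1/2}\|\partial_t f\|_{Y_T^{s-1}},
\]
while $\|f(t_2)-f(t_1)\|_{\H^s(\T^d)}\leq 2\|f\|_{X_T^s}$. Interpolating between $\H^{s-1}$ and $\H^s$ with parameter $\theta\in(0,1)$ yields
\[
\|f(t_2)-f(t_1)\|_{\H^{s-\theta}(\T^d)}\leq\|f(t_2)-f(t_1)\|_{\H^{s-1}(\T^d)}^{\theta}\|f(t_2)-f(t_1)\|_{\H^s(\T^d)}^{1-\theta}.
\]
Now I choose $\theta\in(0,\min(1,s-d/2))$ so that $s-\theta>d/2$, whence $\H^{s-\theta}(\T^d)\hookrightarrow\L^\infty(\T^d)$ and
\[
\|f(t_2)-f(t_1)\|_{\L^\infty(\T^d)}\lesssim |t_2-t_1|^{\theta/2}\big(\|\partial_t f\|_{Y_T^{s-1}}+\|f\|_{X_T^s}\big).
\]

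Finally, setting $\alpha_s:=\min(\beta,\theta/2)\in(0,1)$ and combining the two inequalities via the triangle inequality $|f(t_1,x_1)-f(t_2,x_2)|\leq|f(t_1,x_1)-f(t_1,x_2)|+|f(t_1,x_2)-f(t_2,x_2)|$ gives the claimed estimate (since on the bounded set $Q_T$ distances less than one dominate higher Hölder powers). I do not foresee any real obstacle: everything boils down to the two elementary Sobolev embeddings $\H^s\hookrightarrow\mathscr{C}^{0,\beta}$ and $\H^{s-\theta}\hookrightarrow\L^\infty$, together with the standard $\H^{s-\theta}=[\H^{s-1},\H^s]_{1-\theta}$ interpolation inequality; the only care needed is the compatibility condition $\theta<s-d/2$, which is exactly what the hypothesis $s>d/2$ makes available.
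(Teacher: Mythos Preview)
Your proof is correct and follows essentially the same approach as the paper: interpolate between the $\H^{s-1}$ control on time increments (from Cauchy--Schwarz applied to $\partial_t f\in Y_T^{s-1}$) and the $\H^s$ control from $X_T^s$ to land in $\H^{s-\theta}$ with $s-\theta>d/2$, then use the Sobolev embedding into a H\"older space. The paper packages the argument slightly more compactly by working directly in $\mathscr{C}^{0,\theta/2}([0,T];\H^\sigma(\T^d))$ with $\sigma\in(d/2,s)$ and then applying $\H^\sigma\hookrightarrow\mathscr{C}^{0,\beta}(\T^d)$ at the end, but the content is identical.
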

\begin{proof}
  Using Cauchy-Schwarz's inequality and the assumption on the time derivative, we have
  \[\|f(t_1)-f(t_2)\|_{\H^{s-1}(\T^d)} \leq \int_{t_2}^{t_1}\| \partial_t f(r)\|_{\H^{s-1}(\T^d)}\,\dd r \leq |t_1-t_2|^{1/2}\|\partial_t f\|_{Y_T^{s-1}},\]
which establishes that~$f$ belongs to the space~$\mathscr{C}^{0,1/2}([0,T];\H^{s-1}(\T^d))$. Now, we choose~$\sigma $ in~$(d/2,s)$ such that $\sigma>s-1$ so that by interpolation $$\H^\sigma(\T^d) = [\H^{s-1}(\T^d),\H^{s}(\T^d)]_\theta\, .$$  We have thus for $t_1\neq t_2\in [0,T]$  \begin{align*}\|f(t_1)-f(t_2)\|_{\H^\sigma(\T^d)} &\leq \|f(t_1)-f(t_2)\|_{\H^{s-1}(\T^d)}^\theta \|f(t_1)-f(t_2)\|^{1-\theta}_{\H^{s}(\T^d)}\\
                                       &\leq |t_1-t_2|^{\theta/2} \|\partial_t f\|_{\H^{s-1}(\T^d)}^{\theta} 2^{1-\theta} \|f\|_{X_T^s}^{1-\theta} \,  ,\end{align*}
                                     so that
                                     \begin{align*}
\frac{\|f(t_1)-f(t_2)\|_{\H^{\sigma}(\T^d)}}{|t_1-t_2|^{\theta/2}} \lesssim \|\partial_t f\|_{\H^{s-1}(\T^d)} + \|f\|_{X_T^s} \,  ,
                                     \end{align*}
                                     and the conclusion follows using the Sobolev embedding $\H^\sigma(\T^d)\hookrightarrow \mathscr{C}^{0,\beta}(\T^d)$ which holds for some $\beta\in(0,1)$. $\qedhere$ \end{proof}
\begin{lem}\label{lem:lipsob}
For  $\sigma>d/2$ and $\Phi$ a smooth function, there exists an increasing function $\ffi$ for which, for any elements $f,g\in\H^{\sigma}(\T^d)$ 
\begin{equation}\label{ineq:Alip}
\begin{aligned}
  \|\Phi(f)-\Phi(g)\|_{\H^\sigma(\T^d)} & \leq \ffi(\|f\|_{\infty}+\|g\|_\infty) \\
 & \quad  \times \textcolor{black}{\Big[ \|f-g\|_{\H^\sigma(\T^d)}+(\|f\|_{\H^\sigma(\T^d)}+\|g\|_{\H^\sigma(\T^d)})\|f-g\|_\infty}\Big]\, .
                                     \end{aligned}
\end{equation}\end{lem}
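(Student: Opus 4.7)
The plan is to reduce the composition estimate to a product estimate together with a standard Moser-type composition bound. I would start by writing
\[
\Phi(f)-\Phi(g) = (f-g)\,\Psi(f,g), \qquad \Psi(f,g) := \int_0^1 \Phi'\bigl(g+\tau(f-g)\bigr)\,\dd\tau,
\]
which holds pointwise by the fundamental theorem of calculus. The $\H^\sigma$ norm of the left-hand side is then controlled by the tame product estimate, valid on $\T^d$ for $\sigma>d/2$,
\[
\|uv\|_{\H^\sigma(\T^d)} \lesssim \|u\|_{\H^\sigma(\T^d)}\|v\|_\infty + \|u\|_\infty \|v\|_{\H^\sigma(\T^d)}.
\]
Applied to $u=f-g$ and $v=\Psi(f,g)$, this yields
\[
\|\Phi(f)-\Phi(g)\|_{\H^\sigma(\T^d)} \lesssim \|f-g\|_{\H^\sigma(\T^d)}\|\Psi\|_\infty + \|f-g\|_\infty\|\Psi\|_{\H^\sigma(\T^d)}.
\]

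The bound on $\|\Psi\|_\infty$ is immediate: a supremum of $|\Phi'|$ on the ball of radius $\|f\|_\infty+\|g\|_\infty$ gives a bound of the form $\ffi_1(\|f\|_\infty+\|g\|_\infty)$. The main point of the argument is the Sobolev bound on $\Psi$, which is where I would invoke the classical Moser composition estimate: for any smooth $F$ and $\sigma>d/2$,
\[
\|F(w)\|_{\H^\sigma(\T^d)} \leq \ffi_2(\|w\|_\infty)\bigl(1+\|w\|_{\H^\sigma(\T^d)}\bigr),
\]
with $\ffi_2$ increasing. Applied uniformly in $\tau\in[0,1]$ with $F=\Phi'$ and $w=g+\tau(f-g)$, whose $\L^\infty$ and $\H^\sigma$ norms are controlled respectively by $\|f\|_\infty+\|g\|_\infty$ and $\|f\|_{\H^\sigma}+\|g\|_{\H^\sigma}$, integration in $\tau$ by Minkowski gives
\[
\|\Psi\|_{\H^\sigma(\T^d)} \leq \ffi_3(\|f\|_\infty+\|g\|_\infty)\bigl(1+\|f\|_{\H^\sigma(\T^d)}+\|g\|_{\H^\sigma(\T^d)}\bigr).
\]

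Plugging both estimates back, the undesired $+1$ is absorbed via the Sobolev embedding $\H^\sigma(\T^d)\hookrightarrow \L^\infty(\T^d)$ valid for $\sigma>d/2$, so that $\|f-g\|_\infty\leq C\|f-g\|_{\H^\sigma(\T^d)}$; thus the term $\ffi_3\cdot \|f-g\|_\infty$ merges with the $\ffi_1\cdot\|f-g\|_{\H^\sigma}$ contribution. Setting $\ffi := C(\ffi_1+\ffi_3)$ yields the announced estimate. The only non-elementary ingredient is the Moser composition bound; I would prove it by dyadic paralinearization, writing $F(w)=\sum_j \bigl(F(S_{j+1}w)-F(S_j w)\bigr) = \sum_j (\Delta_j w)\int_0^1 F'(S_j w+\tau\Delta_j w)\,\dd\tau$ and controlling each piece using the Bernstein inequality, but since this estimate is classical (see e.g.\ the appendix of \cite{BCD}) I would simply cite it. No genuine obstacle is expected beyond correctly tracking the dependence on $\|f\|_\infty+\|g\|_\infty$ in the constants.
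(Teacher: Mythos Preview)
Your argument is correct and is the standard route to this estimate. The paper does not give a proof at all: it simply cites \cite[Corollary 2.91]{BCD}, and your sketch (Taylor formula, tame product estimate, Moser composition bound on $\Phi'$, absorption of the ``$+1$'' via the Sobolev embedding) is precisely the argument one finds there.
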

\begin{proof}
   See for instance \cite[Corollary 2.91]{BCD}.
\end{proof}
\section{Littlewood-Paley theory}\label{sec:LP}
\subsection{Definitions}
In this section we present the elements of Littlewood-Paley theory that are used in this study. We recall  (see for instance \cite{BCD} where the construction is carried out in~$\R^d$ but is easily adapted to the periodic case) that the basic idea is  to consider a dyadic partition of unity in~$\R^d$
$$
1 = \widehat\chi +  \sum_{j \geq 0} \widehat \varphi (2^{-j} \cdot)
$$
 where~$\widehat\chi$ and~$\widehat\varphi$ (the Fourier transforms of two smooth functions~$\chi$ and~$\varphi$) are smooth, radial functions, taking values in~$[0,1]$ and supported respectively in the ball~$B(0,4/3)$ and the ring~$[3/4,8/3]$.
We set for any integer~$j \geq 0$ and any function~$f$ defined on~$\T^d$
\begin{equation}\label{eq:def Delta j}
\Delta_j f:= f \star 2^{jd} \varphi (2^j  \cdot )  
\end{equation}
and
$$
\Delta_{-1} f:= f \star  \chi 
\, .
$$
Note that in particular
$$
 \sum_{j \geq -1}\Delta_j  = \mbox{Id} \, .
$$
 Finally for~$j < -1$ we set~$\Delta_{ j} = 0$ and
 $$
\forall j \geq 0 \, , \quad S_j:= \sum_{j'=-1}^{j-1}\Delta_{j'} \, .
  $$
 Writing these formulas in Fourier space we see that the support of the Fourier transform of~$ \Delta_j f$ lies in a ring of size~$2^j$
 if~$j \geq 0$ and in the unit ball if~$j = -1$ (this corresponds therefore to the average of~$f$). Moreover the functions~$\widehat\chi$ and~$\widehat\varphi$ are designed so as to have
 $$
 |j-j' | \geq 2 \Longrightarrow \mbox{Supp}\, \widehat \varphi (2^{-j} \cdot) \cap \mbox{Supp} \, \widehat \varphi (2^{-j'} \cdot)  = \emptyset
 $$
 and
  $$
j \geq 1 \Longrightarrow \mbox{Supp}\,  \widehat \chi \cap \mbox{Supp}\, \widehat  \varphi (2^{-j} \cdot)  = \emptyset\, .
 $$
 The following Bernstein inequality is used many times in this paper
 \begin{equation}\label{eq:bernstein}
\forall \alpha \in \N^d \, , \quad \forall 1 \leq p \leq q \leq \infty \,, \quad \big\|\partial^\alpha \Delta_j  f \big\|_{q} \lesssim 2^{j \big(|\alpha| + d (\frac1p-\frac1q)\big)}   \big\|\Delta_j   f\big\|_{p}\, .
\end{equation}
For the convenience of the reader let us recall how to prove this inequality: we consider a smooth, compactly supported function~$\widehat{\widetilde \varphi}$ such that~$
\widehat{\widetilde \varphi }\widehat \varphi \equiv 1$ and we note that
$$
\Delta_j f = \Delta_j f \star 2^{jd} \widetilde\varphi (2^j  \cdot )   \, .
$$
Then we write
$$
\partial^\alpha \Delta_jf = \Delta_j f \star 2^{jd+j|\alpha|} (\partial^\alpha\widetilde\varphi) (2^j  \cdot ) 
$$
and we conclude by Young's inequality
$$
 \big\|\partial^\alpha \Delta_j  f \big\|_{q} \leq  2^{jd+j|\alpha|}\big\|\partial^\alpha \Delta_j  f \big\|_{p} 
  \big\| \partial^\alpha\widetilde\varphi (2^j  \cdot )  \big\|_{r} \, , \quad 1+\frac1q =  \frac1p+ \frac1r
$$
and the result~\eqref{eq:bernstein}
 follows.
It is also useful to note (see for instance~\cite[Lemma 2.4]{BCD}) that there is a constant~$c>0$ such that
 \begin{equation}\label{eq:heat}
 \forall j \geq 0 \, , \quad  \forall 1 \leq p  \leq \infty \,, \quad \big\|e^{t\Delta} \Delta_j  f \big\|_{p} \lesssim  e^{-ct 2^{2j}}  \big\|\Delta_j   f\big\|_{p}\, .
\end{equation}
\medskip

With this construction,  Sobolev spaces can be defined by  the equivalent norm
  $$
\|f\|_{\H^s }\sim \Big\| 2^{js} \|   \Delta_j f\|_{\L^2(\T^d)} \Big\|_{\ell^2(\mathbb Z)} \, ,
$$
and the Besov spaces dealt with in this paper are given, for    any~$p \in [1,\infty)$  and~$s \in \R$ by the norm
 $$
 \|f\|_{B^s_{p,1}}:= \sum_{j \geq -1} 2^{js} \|\Delta_j f\|_{\L^p(\T^d)} \, .
 $$
 It is well-known (see~\cite{BCD}) that any function  in~$B^\frac dp_{p,1}$ is   continuous.

One major interest of this theory is the paraproduct algorithm due to Bony \cite{bony}  :  decomposing formally any two tempered distributions~$f$ and~$g$ as
$$
f = \sum_{j  } \Delta_j f \quad \mbox{and} \quad g = \sum_{j } \Delta_j g
$$
then the product~$fg$ can formally be decomposed into three parts
$$
fg= T_fg+T_gf+R(f,g) \,, \quad T_fg:=\sum_{j }S_{j-1} f \,\Delta_j g \,, \quad R(f,g) :=\sum_{|j-j'|\leq 1 }  \Delta_j f \,\Delta_{j'}g \, .
$$
On the Fourier side, thanks to the support properties of~$\varphi$ and~$\chi$, each term~$S_{j-1} f \Delta_j g$ of the paraproduct~$ T_fg$ is supported in a ring of size~$2^j$ (hence the sum is well defined under mild assumptions on~$f$ and~$g$: for instance~$f$ bounded and~$g$ in any Sobolev or H\"older space). The remainder term~$ R(f,g)$ however is not always well defined. On the Fourier side, each term~$ \Delta_j f \Delta_{j'}g$ is supported in a ball of size~$2^j$ (since~$j \sim j'$) and the sum only makes sense if the regularities of~$f$ and~$g$ sum up to a positive number. We refer to~\cite{BCD} for instance for more on this.

In this paper we sometimes use a less sharp decomposition, writing  formally
$$
fg= \sum_{j }S_{j-1} f \,\Delta_j g +  \sum_{j }\Delta_j  f \, S_{j+2}g \, .
$$

  \subsection{A product law}
  Let us start by proving the following useful estimate.
   \begin{prop}\label{prop:rhsSobo}
 The following estimate holds, for any smooth enough~$f$ and~$g$ :
$$
\|f g\|_{\H^s(\T^d)} \lesssim \|f\|_{\infty} \|g\|_{\H^s(\T^d)}+\|f\|_{\H^{s+1}(\T^d)}\|g\|_{\H^{s-1}(\T^d)}.
$$
\end{prop}
\begin{proof}
  As usual we treat adequately each term of the decomposition
  \begin{align*}
fg = T_f g + T_g f  + R(f,g)\, .
  \end{align*}
  For any natural integer $j$ there holds $\|S_{j-1} f\|_\infty\leq \|f\|_\infty$ so that $$\|T_f g\|_{\H^s(\T^d)} \leq \|f\|_\infty \|g\|_{\H^s(\T^d)}$$ follows directly. We aim to bound the two other terms by $\|f\|_{\H^{s+1}(\T^d)}\|g\|_{\H^{s-1}(\T^d)}$. First, as each term of the sum defining $T_g f$ is localized around $2^j$, we have for any natural integer $k$ using Bernstein's inequality
  \begin{align*}
    2^{ks}\|\Delta_k T_g f\|_2 &= 2^{ks} \sum_{j\sim k} \|S_{j-1} g \Delta_j f\|_2\\ &\lesssim 2^{ks} \sum_{j'\lesssim j \sim k} 2^{j'\frac{d}{2}}\|\Delta_{j'} g\|_2\|\Delta_j f\|_2\\
    &= 2^{ks} \sum_{j'\lesssim j \sim k} 2^{j'(\frac{d}{2}-(s-1))} \stackrel{:=g_{j'}}{\overbrace{2^{j'(s-1)}\|\Delta_{j'} g\|_2}}2^{-j(s+1)}\stackrel{:=f_j}{\overbrace{2^{j(s+1)}\|\Delta_{j} f\|_2}}\\
    &\lesssim \sum_{j'\lesssim j\sim k} 2^{-(j-j')} g_{j'} f_j\, ,
  \end{align*}
  where we crucially used $s>d/2$, $j'\lesssim j$ and $j\sim k$ for the last inequality. Since the~$\ell^2(\Z)$ norms of $(g_j)_j$ and $(f_j)_j$ are respectively $\|g\|_{\H^{s-1}(\T^d)}$ and $\|f\|_{\H^{s+1}(\T^d)}$, we have first by Cauchy-Schwarz inequality \[2^{ks}\|\Delta_k T_g f\|_2 \lesssim \sum_{j'\lesssim j\sim k} 2^{-(j-j')} g_{j'} f_j = \sum_{j\sim k} \Big(\sum_{j'\lesssim j}  2^{-(j-j')} g_{j'}\Big) f_j \lesssim \|g\|_{\H^{s-1}(\T^d)} \sum_{j\sim k} f_j\, ,\] 
and taking the $\ell^2(\Z)$ norm in $k$ we recover $$\|T_g f\|_{\H^s(\T^d)}\lesssim \|g\|_{\H^{s-1}(\T^d)}\|f\|_{\H^{s+1}(\T^d)}\, .$$
 For the remainder term $R(f,g)$ each term composing this sum has  frequencies in the ball of radius $2^j$, whence again by Bernstein's inequality (we use it here on the $f$'s blocks instead of the $g$'s)
  \begin{align*}
    2^{ks}\|\Delta_k R(f,g)\|_2 &\lesssim 2^{ks} \sum_{j'\sim j \gtrsim k} 2^{j\frac{d}{2}} \|\Delta_{j'} g\| \|\Delta_j f\|_2\\ &= 2^{ks} \sum_{j'\sim j \gtrsim k} 2^{j(\frac{d}{2}-(s+1))}2^{-j'(s-1)} g_{j'} f_j\\
    &\lesssim \sum_{j'\sim j \gtrsim k} 2^{-s(j-k)} g_{j'} f_j\,,
  \end{align*}
  where we used $s>d/2$, $j'\sim j$ and $j\gtrsim k$ in the last line. We have therefore
  \[2^{ks}\|\Delta_k R(f,g)\|_2 \lesssim \sum_{j'\sim j \gtrsim k} 2^{-s(j-k)} g_{j'} f_j = \sum_{j\gtrsim k} \stackrel{:=h_j}{\overbrace{\Big(\sum_{j'\sim j} g_{j'}\Big)}}2^{-s(j-k)}f_j\,,\]
which is nothing esle than the discrete convolution of $(h_j f_j)_j$ with $(2^{-sj}\mathbf{1}_{j\gtrsim 0})_j$, where $j\gtrsim 0$ is related to the signification of $j\gtrsim k$ and thus completely harmless here. Using Young's inequality for (discrete) convolution we infer \[\|R(f,g)\|_{\H^s(\T^d)}\lesssim \|(h_j f_j)_j\|_{\ell^1(\Z)} \leq \|(h_j)_j\|_{\ell^2(\Z)}\|(f_j)_j\|_{\ell^2(\Z)} \lesssim \|g\|_{\H^{s-1}(\T^d)}\|f\|_{\H^{s+1}(\T^d)}\,,\]
and the proof is over.
\end{proof}
A similar argument leads to the following inequality.
 \begin{prop}\label{prop:rhsBesov}
 The following estimate holds, for any smooth enough~$f$ and~$g$ :
$$
\|f \partial_k g\|_{ B^{\frac dp+1}_{p,1}} \lesssim \|f\|_{L^\infty } \|\nabla g\|_{  B^{\frac dp+	1}_{p,1}}+\min \big( 
\|  f\|_{B^{\frac dp+2}_{p,1}} \|g\|_{B^{\frac dp}_{p,1}} , \|  f\|_{B^{\frac dp+1}_{p,1}} \|g\|_{B^{\frac dp+1}_{p,1}} \big)\, .
$$
\end{prop}
\begin{proof}
We use again  the paraproduct algorithm and write
$$
f \partial_k g = T_f  \partial_k g +  T_{\partial_k g }f + R(f,\partial_k g ) \, .  
$$
On the one hand
$$
\begin{aligned}
\|T_f  \partial_k g\|_{ B^{\frac dp+1}_{p,1}} &\lesssim \sum_j \|S_{j-1}f\|_{L^\infty } \|\Delta_j  \partial_k g\|_{ L^p}2^{j (\frac dp+1)}\\
 &\lesssim\| f\|_{L^\infty }  \sum_j  \|\Delta_j  \nabla  g\|_{L^p}2^{j  (\frac dp+2)} =  \|f\|_{L^\infty} \|\nabla g\|_{  B^{\frac dp+1	}_{p,1}}\, .
\end{aligned}
$$
On the other hand
$$
\begin{aligned}
\|T_{ \partial_k g} f \|_{ B^{\frac dp+1}_{p,1}} &\lesssim \sum_{j'\lesssim j } \|\Delta_{j'} \partial_k g\|_{L^\infty }\|\Delta_j f\|_{L^p } 2^{j (\frac dp+1)}\\
 &\lesssim\sum_{j'\lesssim j }2^{j'(\frac dp+1)} \|\Delta_{j'} g\|_{ L^p} \|\Delta_j   f\|_{ L^p }2^{j (\frac dp+1)} 
 \end{aligned}
$$ which can be bounded directly by
$$
\|T_{ \partial_k g} f \|_{ B^{\frac dp+1}_{p,1}}\lesssim \|  f\|_{B^{\frac dp+1}_{p,1}} \|g\|_{B^{\frac dp+1}_{p,1}}  \, 
,
$$
or by
 $$
\begin{aligned}
\|T_{ \partial_k g} f \|_{ B^{\frac dp+1}_{p,1}} &\lesssim
  \sum_{j'\lesssim j } 2^{j'\frac dp} \|\Delta_{j'} g\|_{ L^p} 2^{j (\frac dp+2)} \|\Delta_j   f\|_{ L^p }2^{j'-j}   \\
&\lesssim\|  f\|_{B^{\frac dp+2}_{p,1}} \|g\|_{ B^{\frac dp}_{p,1}}   \, .
\end{aligned}
$$
Finally
$$
\|R(f,\partial_k g ) f \|_{ B^{\frac dp+1}_{p,1}}  \lesssim
 \sum_{j\lesssim j'} 2^{j  \frac dp }   \|\Delta_{j'} \partial_k g \|_{ L^p} \|\Delta_{j'} f\|_{  L^p }2^{j (\frac dp+1)}
$$
can be bounded by
$$
\begin{aligned}
\|R(f,\partial_k g ) f \|_{ B^{\frac dp+1}_{p,1}}   &\lesssim  \sum_{j\lesssim j'} 2^{j  (\frac {2d}p+1) }  2^{j' ( \frac dp+1) } \|\Delta_{j'} g \|_{ L^p} 2^{j' ( \frac dp+1) }\|\Delta_{j'} f\|_{L^p } 2^{-j'(  \frac {2d}p+1 )}\\
 &\lesssim\|  f\|_{B^{\frac dp+1}_{p,1}} \|g\|_{ B^{\frac dp+1}_{p,1}}   \end{aligned}
$$
or by
$$
\begin{aligned}
\|R(f,\partial_k g ) f \|_{ B^{\frac dp+1}_{p,1}}   &\lesssim  \sum_{j\lesssim j'} 2^{j  (\frac {2d}p+1) }  2^{j'  \frac dp } \|\Delta_{j'} g \|_{ L^p} 2^{j' ( \frac dp+2) }\|\Delta_{j'} f\|_{L^p } 2^{-j'(  \frac {2d}p+1 )}\\
 &\lesssim\|  f\|_{B^{\frac dp+2}_{p,1}} \|g\|_{ B^{\frac dp}_{p,1}}   \, .\end{aligned}
$$
Proposition~\ref{prop:rhsBesov} is proved. \end{proof}

\section{Petrovskii condition, Hurwitz matrices and spectral radius}\label{appendix:petrovskii}

For $B$ in $\textnormal{M}_N(\mathbb{C})$ we denote by $\textnormal{Sp}(B)$ the set of all its eigenvalues ; the spectral radius~$\rho(B)$ of $B$ is then defined by $\rho(B) = \max_{\lambda\in \textnormal{Sp}(B)} |\lambda|$. For any $\delta\in\R$ we denote by $\mathscr{P}_{\delta}$ the set of matrices $B$ for which $\textnormal{Sp}(B)\subset\{z\in\mathbb{C}\,:\,\textnormal{Re}(z)\geq \delta\}$.

\medskip

The matrix~$B$ is said to satisfy the \emph{Petrovskii condition} if $\textnormal{Sp}(B)\subset \R_{>0}+i\R$, that is if $B$ belongs to $\mathscr{P} :=\cup_{\delta>0} \mathscr{P}_\delta$. Note that in control theory and dynamical systems, the denomination \emph{Hürwitz matrix} also exists, but refers instead to a matrix whose spectrum lies in $\R_{<0}+i\R$ ; we will not use this terminology here.

\medskip

The results below, even though elementary, are of crucial importance in our analysis.

\begin{lem}\label{lem:gampet}
The map $\gamma:B\mapsto -\ln \rho(e^{-B})$ is continuous from $\textnormal{M}_N(\mathbb{C})$ to $\R$. Furthermore we have $\mathscr{P} = \gamma^{-1}(\R_{>0})$ and $B\in\mathscr{P}_{\gamma(B)}$, for any matrix $B$. 
\end{lem}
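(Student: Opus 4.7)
The plan rests on a single identification: via the spectral mapping theorem, for any $B\in\textnormal{M}_N(\mathbb{C})$ one has $\textnormal{Sp}(e^{-B}) = \{e^{-\lambda} : \lambda \in \textnormal{Sp}(B)\}$, and hence
\[
 \rho(e^{-B}) = \max_{\lambda \in \textnormal{Sp}(B)} |e^{-\lambda}| = \max_{\lambda \in \textnormal{Sp}(B)} e^{-\textnormal{Re}(\lambda)} = e^{-\min_{\lambda \in \textnormal{Sp}(B)} \textnormal{Re}(\lambda)}.
\]
Taking the negative logarithm gives the key formula
\[
 \gamma(B) = \min_{\lambda \in \textnormal{Sp}(B)} \textnormal{Re}(\lambda).
\]
From this formula, both the characterization $\mathscr{P} = \gamma^{-1}(\R_{>0})$ and the inclusion $B \in \mathscr{P}_{\gamma(B)}$ are tautological consequences of the definitions of $\mathscr{P}_\delta$ and $\mathscr{P} = \bigcup_{\delta>0}\mathscr{P}_\delta$.

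It remains to establish continuity. I would proceed in three short steps. First, observe that the matrix exponential $B \mapsto e^{-B}$ is continuous $\textnormal{M}_N(\mathbb{C}) \to \textnormal{M}_N(\mathbb{C})$ (as the uniform limit of its partial Taylor sums on bounded sets). Second, invoke the classical fact that the spectral radius $\rho:\textnormal{M}_N(\mathbb{C})\to\R_{\geq 0}$ is continuous—this follows, for instance, from Gelfand's formula $\rho(A) = \lim_k \|A^k\|^{1/k}$ together with the fact that the roots of a monic polynomial depend continuously on its coefficients applied to the characteristic polynomial. Third, note that $e^{-B}$ is always invertible so all its eigenvalues are non-zero, whence $\rho(e^{-B}) > 0$ for every $B$; composition with $-\ln$, continuous on $\R_{>0}$, then yields continuity of $\gamma$.

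There is no real obstacle: the only thing worth double-checking is that $\rho$ is continuous (not merely upper semicontinuous), but this is standard in finite dimension via continuous dependence of roots of polynomials on their coefficients. The argument is essentially formal once the spectral mapping identity is written down.
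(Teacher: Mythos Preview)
Your proposal is correct and follows essentially the same route as the paper: both identify $\gamma(B)=\min_{\lambda\in\textnormal{Sp}(B)}\textnormal{Re}(\lambda)$ via the spectral mapping theorem, deduce the set-theoretic claims immediately, and obtain continuity from continuity of the spectral radius combined with invertibility of $e^{-B}$. The only cosmetic difference is that the paper justifies continuity of $\rho$ through the continuity of $B\mapsto\textnormal{Sp}(B)$ for the Hausdorff distance (citing \cite[Theorem~5.2]{serre}), whereas you invoke continuous dependence of the roots of the characteristic polynomial---these are equivalent standard facts.
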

\begin{proof}
  The map $B\mapsto \textnormal{Sp}(B)$ is continuous, for the (modified) Hausdorff distance on finite sets $\dd_{\H}$ at arrival (see \cite[Theorem 5.2]{serre}). In particular, since $\rho(B) = \dd_{\H}(\textnormal{Sp}(B),\{0\})$, the spectral radius map $B\mapsto \rho(B)$ is continuous and therefore so is $\gamma$ because $\rho$ is positive on~$\textnormal{GL}_N(\mathbb{R})$. For the remaining part of the statement, the proof is ended once noticed that for any matrix $B$ one has $\rho(e^{-B}) = \max_{\lambda \in\textnormal{Sp}(B)} e^{-\textnormal{Re}(\lambda)}$, so that $\gamma(B)$ is actually the lowest real part among all eigenvalues of $B$. $\qedhere$
\end{proof}
\begin{coro}\label{coro:thetapet}
  Consider $K$ a metric compact space and $\mathscr{C}^0(K;\textnormal{M}_N(\R))$ equipped with the uniform topology. The map
  \begin{align*}
    \eta:\mathscr{C}^0(K;\textnormal{M}_N(\R)) &\longrightarrow\R\\
    M &\longmapsto \min_K \gamma \circ M 
  \end{align*}
  is well-defined and continuous. In particular, $\mathscr{C}^0(K;\mathscr{P})$ is open and lies in the set~$\cup_{\delta>0}\mathscr{C}^0(K;\mathscr{P}_\delta)$, where each $\mathscr{C}^0(K;\mathscr{P}_\delta)$ is closed. 
\end{coro}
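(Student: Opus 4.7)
The plan is to deduce everything from the continuity of the scalar function $\gamma$ established in Lemma~\ref{lem:gampet}, viewing $\eta$ as the composition of two basic operations: pointwise postcomposition $M \mapsto \gamma\circ M$, followed by the "minimum on $K$" functional. First, for the well-definedness, I observe that for any $M\in\mathscr{C}^0(K;\textnormal{M}_N(\R))$ the map $\gamma\circ M:K\to\R$ is continuous (as the composition of the continuous maps $M$ and $\gamma$), hence attains its minimum on the compact set $K$, so $\eta(M)$ is a well-defined real number.

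Next, for the continuity of $\eta$, I will show that postcomposition by $\gamma$ is continuous from $\mathscr{C}^0(K;\textnormal{M}_N(\R))$ to $\mathscr{C}^0(K;\R)$ (both equipped with the uniform topology), and then use the elementary bound
\[
|\min_K f - \min_K g|\leq \|f-g\|_{\mathscr{C}^0(K)}.
\]
For the postcomposition step, if $M_n\to M$ uniformly on $K$ then for $n$ large the images $M_n(K)$ and $M(K)$ all lie in a fixed compact subset of $\textnormal{M}_N(\R)$; on such a compact the continuous function $\gamma$ is uniformly continuous, and this yields $\|\gamma\circ M_n-\gamma\circ M\|_{\mathscr{C}^0(K)}\to 0$. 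Composing these two continuous maps gives the continuity of $\eta$. This is the only step where I expect a minor subtlety (making sure to localize $\gamma$ on a compact before invoking uniform continuity), but it is entirely standard.

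Finally, for the topological consequences on the sublevel and superlevel sets, I will simply rewrite the conditions in terms of $\eta$. By Lemma~\ref{lem:gampet}, for each $x\in K$ one has $M(x)\in\mathscr{P}$ if and only if $\gamma(M(x))>0$, and $M(x)\in\mathscr{P}_\delta$ if and only if $\gamma(M(x))\geq\delta$. Taking the infimum over $x\in K$ gives the equivalences
\[
M\in\mathscr{C}^0(K;\mathscr{P})\iff \eta(M)>0,\qquad M\in\mathscr{C}^0(K;\mathscr{P}_\delta)\iff \eta(M)\geq\delta.
\]
The continuity of $\eta$ thus immediately yields that $\mathscr{C}^0(K;\mathscr{P})=\eta^{-1}((0,\infty))$ is open and each $\mathscr{C}^0(K;\mathscr{P}_\delta)=\eta^{-1}([\delta,\infty))$ is closed. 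Moreover, if $M\in\mathscr{C}^0(K;\mathscr{P})$, setting $\delta:=\eta(M)>0$ gives $M\in\mathscr{C}^0(K;\mathscr{P}_\delta)$, which proves the inclusion $\mathscr{C}^0(K;\mathscr{P})\subset\bigcup_{\delta>0}\mathscr{C}^0(K;\mathscr{P}_\delta)$ and concludes the proof.
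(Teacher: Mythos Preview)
Your proof is correct and follows essentially the same route as the paper: well-definedness via compactness of $K$, continuity of $\eta$ by localizing $\gamma$ on a compact (the paper phrases this as ``$\gamma$ continuous and $\textnormal{M}_N(\R)$ locally compact''), and the identification $\mathscr{C}^0(K;\mathscr{P})=\eta^{-1}(\R_{>0})$, $\mathscr{C}^0(K;\mathscr{P}_\delta)=\eta^{-1}(\R_{\geq\delta})$. You are simply more explicit about the bound $|\min_K f-\min_K g|\leq\|f-g\|_{\mathscr{C}^0(K)}$, which the paper leaves implicit.
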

\begin{proof}
The map is well-defined because $\gamma\circ M$ is continuous and reaches therefore its minimum on $K$. If $(M_k)_k$ converges uniformly to $M$, then so does $(\gamma \circ M_k)_k$, to $\gamma \circ M$ because $\gamma$ is continuous and $\textnormal{M}_N(\mathbb{R})$ locally compact ; this classically implies the convergence of $\eta(M_k)$ towards $\eta(M)$. By continuity if $M\in\mathscr{C}^0(K;\mathscr{P})$, then $\eta(M)>0$ and $M(K)\subset\mathscr{P}_{\eta(M)}$. In particular $\mathscr{C}^0(K;\mathscr{P}) = \eta^{-1}(\R_{>0})$  is indeed open and each $\mathscr{C}^0(K;\mathscr{P}_\delta) = \eta^{-1}(\R_{\geq \delta})$ is closed. $\qedhere$
\end{proof}
\begin{coro}\label{coro:etahold}
  Consider $K$ a metric compact space. There exists a non-increasing function $f:\R_{>0}\rightarrow\R_{>0}$ such that for any $M$ in the space~$\mathscr{C}^{0}(K;\mathscr{P})$ and any $H$ in~$\mathscr{C}^0(K;\textnormal{M}_N(\R))$ the following implication holds \[\|M-H\|_\infty < f(\|M\|_\infty+\eta(M)^{-1})\Longrightarrow \eta(H)\geq \eta(M)/2\,.\]
\end{coro}
\begin{proof}
The continuous map $\gamma$ is uniformly continuous on the $\textnormal{M}_N(\R)$ ball of radius $R_M:=1+\|M\|_\infty$. Thus, there exists $a\leq 2 R_M$ such that, for any two matrices $B_1$ and $B_2$ within this ball, the following implication holds
\[\| B_1 - B_2 \| \leq a \Longrightarrow |\gamma(B_1)-\gamma(B_2)|\leq \eta(M)/2.\]
The supremum $a_M$ of all those $a$ is a well defined real number and non-increasing in $\|M\|_\infty$ while non-decreasing in $\eta(M)$ so at the end non-increasing in $\|M\|_\infty+\eta(M)^{-1}$. To conclude the proof, we thus only have to check the following implication holds for any matrix field $H\in\mathscr{C}^0(K;\textnormal{M}_N(\R))$ 
\begin{align*}
    \| M -H\|_\infty \leq \min(1,a_M) \Longrightarrow \eta(H)\geq \eta(M)/2.
\end{align*}
Since $\|M-H\|_\infty \leq 1$, we see that $H$ takes its values in the $\textnormal{M}_N(\R)$ ball of radius $R_M$ defined above, and the previous uniform continuity can be invoked for an arbitrary $z\in K$ and two matrices $B_1=M(z)$, $B_2=H(z)$. We have therefore, since $\gamma(B_1)\geq \eta(M)$ and $\|M-H\|_\infty\leq a_M$,
\[\gamma(H(z)) \geq \eta(M)/2,\]
and this lower-bound being uniform in $z$ we have indeed $\eta(H)\geq \eta(M)/2$. $\qedhere$
\end{proof}
\section{Smooth (almost) retraction of $\R^N$ on $\R_{\geq 0}^N$}\label{app:retract}
We prove in this paragraph the following proposition. 
\begin{prop}\label{prop:quadrant}
For any open neighbourhood~$\Omega$ of~$\R_{\geq 0}^N$, there exists a smooth function~${h:\R^N\rightarrow \Omega}$ such that its restriction to~$\R_{\geq 0}^N$ is the identity map.
\end{prop}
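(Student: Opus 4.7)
The plan is to construct $h$ componentwise as a rescaled smooth positive-part map, with the scale depending on $x$ so that the image remains trapped inside $\Omega$. First, I will fix once and for all a smooth cut-off $\sigma:\R\to[0,1]$ with $\sigma\equiv 1$ on $[0,+\infty)$ and $\sigma\equiv 0$ on $(-\infty,-1]$, and set $\eta(t):=t\,\sigma(t)$. This $\eta$ is smooth on $\R$, agrees with the identity on $[0,+\infty)$, vanishes on $(-\infty,-1]$, and satisfies $|\eta(t)|\leq 1$ everywhere (since $|t\,\sigma(t)|\leq |t|\leq 1$ on $[-1,0]$, and $\eta\equiv 0$ outside). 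The role of $\eta$ is to provide a smooth approximation of the positive-part function that is \emph{exactly} the identity on $[0,+\infty)$, so that it will not perturb points already in $\R_{\geq 0}^N$.

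Next, writing $x^+:=(x_1^+,\dots,x_N^+)$ and $\rho(y):=\mathrm{dist}(y,\R^N\setminus\Omega)$, I will construct a smooth positive function $r:\R^N\to\R_{>0}$ satisfying
\[
\sqrt{N}\,r(x)\leq \tfrac12\,\rho(x^+)\qquad\text{for every } x\in\R^N.
\]
Such an $r$ exists because $x\mapsto \rho(x^+)$ is continuous on $\R^N$ (composition of continuous maps) and strictly positive (since $\R_{\geq 0}^N\subset\Omega$ and $\Omega$ is open), and any continuous positive function on $\R^N$ admits a smooth positive minorant obtained by a standard partition of unity: cover $\R^N$ by open sets on which $\rho(\cdot^+)$ is bounded below by a positive constant, and take a weighted sum of a subordinate smooth partition of unity. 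This is the step that absorbs the possible non-uniform behaviour of $\Omega$ at infinity, where no single constant scale would do.

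Finally, I will define
\[
h(x)_i := r(x)\,\eta\!\left(\frac{x_i}{r(x)}\right),\qquad i=1,\dots,N.
\]
Smoothness of $h$ is immediate since $r>0$ is smooth and $\eta$ is smooth. For $x\in\R_{\geq 0}^N$, every $x_i/r(x)\geq 0$, so $\eta(x_i/r(x))=x_i/r(x)$ and $h(x)=x$, which is the prescribed retraction property. For arbitrary $x\in\R^N$, the coordinatewise bound $|h(x)_i-x_i^+|\leq r(x)$ is trivial when $x_i\geq 0$ and follows from $|\eta|\leq 1$ when $x_i<0$; summing yields $|h(x)-x^+|\leq \sqrt{N}\,r(x)\leq \tfrac12\rho(x^+)<\rho(x^+)$, so $h(x)\in B(x^+,\rho(x^+))\subset\Omega$, which closes the argument.

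The only non-routine ingredient is the construction of the smooth scale $r$, since it must simultaneously be smooth, strictly positive, and pointwise dominated by $\tfrac{1}{2\sqrt N}\rho(x^+)$ everywhere; the rest of the verification is direct bookkeeping.
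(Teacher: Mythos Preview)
Your argument is correct and genuinely different from the paper's. One small slip: the claim ``$|\eta(t)|\leq 1$ everywhere'' is false (for $t\geq 1$ you have $\eta(t)=t$), but you only invoke it when $x_i<0$, i.e.\ for negative arguments of $\eta$, and on $(-\infty,0]$ the bound does hold; so the proof stands. You might also note that if $\Omega=\R^N$ the distance $\rho$ is vacuously infinite and any constant $r$ works.

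The paper's route is more abstract: it first shrinks $\Omega$ to a sub-neighbourhood $\widetilde\Omega\supset\R_{\geq 0}^N$ that is diffeomorphic to $\R^N$ (via a flow argument using smooth Urysohn), then exploits the resulting smooth contraction $\gamma:[0,1]\times\widetilde\Omega\to\widetilde\Omega$ by setting $h(v)=\gamma(\varphi(v),v)$ for a suitable Urysohn cutoff $\varphi$. Your approach bypasses all of this by using the product structure $\R_{\geq 0}^N=[0,\infty)^N$: you smooth the coordinatewise positive-part retraction at a variable scale $r(x)$ chosen (via a smooth minorant of the continuous positive function $x\mapsto\rho(x^+)$) so that the image stays in $\Omega$. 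Your construction is more elementary and explicit --- the only external input is the existence of a smooth positive minorant, a one-line partition-of-unity fact --- whereas the paper's construction, being insensitive to the coordinate structure, would adapt more readily to closed sets other than $\R_{\geq 0}^N$.
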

\noindent Let us first recall the smooth Urysohn lemma.
\begin{lem}[Smooth Urysohn]\label{lem:ury}
For two disjoints and closed sets $F_0$ and $F_1$ of $\R^N$ there exists a smooth function $\ffi:\R^N\rightarrow[0,1]$ such that $\ffi^{-1}(\{1\}) = F_1$ and $\ffi^{-1}(\{0\})=F_0$.
\end{lem}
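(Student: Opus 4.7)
The plan is to build $\varphi$ as a ratio of two smooth nonnegative functions, each vanishing exactly on one of the two closed sets. The key intermediate statement is: for any closed subset $F\subset\R^N$, there exists a smooth function $f:\R^N\to\R_{\geq 0}$ with $f^{-1}(\{0\})=F$. Once this is available, I would apply it twice to produce smooth $f_0,f_1\geq 0$ with $f_i^{-1}(\{0\})=F_i$, and then set
\[
\varphi:=\frac{f_0}{f_0+f_1}\,.
\]
Since $F_0\cap F_1=\emptyset$, the denominator never vanishes, so $\varphi$ is smooth, takes values in $[0,1]$, and manifestly satisfies $\varphi(x)=0\Leftrightarrow f_0(x)=0\Leftrightarrow x\in F_0$ and $\varphi(x)=1\Leftrightarrow f_1(x)=0\Leftrightarrow x\in F_1$.

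To construct such an $f$ for a closed set $F$, I would exploit that the complement $U:=\R^N\setminus F$ is open and $\R^N$ is second countable, so $U$ can be covered by a countable family of open balls $(B(x_k,r_k))_{k\geq 1}$ with $\overline{B(x_k,r_k)}\subset U$ and such that every point of $U$ lies in at least one of them. Fix once and for all a smooth bump $\psi\in\mathscr{C}^\infty(\R^N)$ supported in $B(0,1)$ and strictly positive on $B(0,1)$, and set $\psi_k(x):=\psi((x-x_k)/r_k)$, so $\psi_k\geq 0$ is smooth, supported in $\overline{B(x_k,r_k)}\subset U$, and strictly positive on $B(x_k,r_k)$. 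Choosing positive constants $c_k>0$ with $c_k\bigl(1+\|\psi_k\|_{\mathscr{C}^k(\R^N)}\bigr)\leq 2^{-k}$ ensures that the series $f:=\sum_{k\geq 1}c_k\psi_k$ converges in every $\mathscr{C}^m(\R^N)$ (all but finitely many tails are controlled in $\mathscr{C}^m$ norm by a geometric series), hence defines a smooth function. All terms are nonnegative; on $F$ every $\psi_k$ vanishes so $f=0$; on $U$ any point lies in some $B(x_k,r_k)$ with $\psi_k>0$, hence $f>0$. Thus $f^{-1}(\{0\})=F$.

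The main obstacle is precisely the construction of this $f$: one must simultaneously guarantee the zero set is \emph{exactly} $F$ (which forces every ball of the cover to contribute strictly positively) and the smoothness of the infinite sum (which forces the coefficients to decay fast enough in every $\mathscr{C}^m$ norm). The weighted-series trick above resolves both requirements at once, and the rest of the argument — the quotient $f_0/(f_0+f_1)$ — is elementary.
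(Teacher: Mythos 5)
Your proof is correct and follows essentially the same route as the paper: both reduce the lemma to the existence, for each closed set $F$, of a smooth nonnegative function vanishing exactly on $F$, and then take the quotient $f_0/(f_0+f_1)$ (your normalization is in fact the right one for the stated conclusion, whereas the paper's displayed quotient has the roles of the two functions inadvertently swapped). The only difference is that the paper cites this intermediate statement from Lee's book, while you prove it directly via a countable cover of the complement by balls and a weighted series converging in every $\mathscr{C}^m$ norm — a correct, self-contained rendition of the standard argument.
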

\begin{proof}
First use \cite[Theorem 2.29]{lee} to find for $k\in\{0,1\}$ smooth functions $\psi_k:\R^N\rightarrow \R$ (easily chosen non-negative) such that $\psi^{-1}_k(\{0\})=F_k$ and then letting $\ffi(x):=\psi_1/(\psi_0+\psi_1 )$ does the trick. $\qedhere$
\end{proof}
\noindent We will deduce Proposition~\ref{prop:quadrant}  from the following lemma.
\begin{lem}\label{lem:ouv}
For any open neighbourhood~$\Omega$ of~$\R_{\geq 0}^N$ there is an open set~$\widetilde \Omega$ such that ${\R_{\geq 0}^N}\subset\widetilde{\Omega}\subset\Omega$, and which is furthermore infinitely diffeomorphic to~$\R^N$.  \end{lem}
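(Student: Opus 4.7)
The plan is to realize $\widetilde{\Omega}$ as a star-shaped open set with respect to an interior point $p\in \R_{>0}^N$, whose radial extent is smoothly parameterized, so that an explicit radial stretch yields the diffeomorphism with $\R^N$.

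I would fix $p:=(1,\ldots,1)$ and, for each unit vector $v\in S^{N-1}$, introduce the exit times
\[
S(v):=\sup\{t\geq 0:p+sv\in \R_{\geq 0}^N \text{ for all } s\in[0,t]\}, \quad T(v):=\sup\{t\geq 0:p+sv\in\Omega \text{ for all } s\in[0,t]\},
\]
both in $(0,+\infty]$. Their reciprocals $\widetilde{S}:=1/S$ and $\widetilde{T}:=1/T$ (with $1/\infty:=0$) satisfy: $\widetilde{S}(v)=\max_i(-v_i)_+$ is continuous; $\widetilde{T}$ is upper semi-continuous, because $\Omega$ is open; and $\widetilde{T}<\widetilde{S}$ strictly on the open set $U:=\{\widetilde{S}>0\}$ of ``outward'' directions, since the exit point $p+S(v)v$ from $\R_{\geq 0}^N$ lies in the open set $\Omega$ and the ray therefore remains in $\Omega$ for a little longer.

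The technical heart of the argument is then to produce a smooth function $\widetilde{R}:S^{N-1}\to[0,+\infty)$ that vanishes (flatly) on $S^{N-1}\setminus U$ and satisfies $\widetilde{T}<\widetilde{R}<\widetilde{S}$ on $U$. I would do this in two stages. On $U$ itself, a standard partition-of-unity argument gives a smooth $\widetilde{R}_0$ with $\widetilde{T}<\widetilde{R}_0<\widetilde{S}$: the upper semi-continuity of $\widetilde{T}$ combined with continuity of $\widetilde{S}$ lets me pick, near each $v_0\in U$, a constant $c(v_0)\in(\widetilde{T}(v_0),\widetilde{S}(v_0))$ that serves as a strict interpolant on a neighborhood of $v_0$, and a smooth partition of unity on $U$ glues these constants together. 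To extend to the whole sphere with the required flat vanishing on $\{\widetilde{S}=0\}$, I multiply by a smooth cutoff vanishing flatly on that closed set, produced via a sphere-adapted version of Lemma~\ref{lem:ury}. The main obstacle I anticipate is preserving the strict inequality $\widetilde{R}>\widetilde{T}$ near the boundary of $U$, where the ratio $\widetilde{T}/\widetilde{S}$ may well tend to $1$ and hence little room is left after multiplying by a small cutoff; this will force a quantitative refinement, e.g.~designing $\widetilde{R}_0$ of the form $\widetilde{S}^{\mathrm{smooth}}\cdot\gamma$ for an appropriate smooth lower approximation $\widetilde{S}^{\mathrm{smooth}}$ of $\widetilde{S}$ vanishing flatly on $\{\widetilde{S}=0\}$ and a smooth $\gamma:S^{N-1}\to(0,1)$ exceeding $\widetilde{T}/\widetilde{S}^{\mathrm{smooth}}$ everywhere on $U$.

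Once $\widetilde{R}$ is in hand, I set $R:=1/\widetilde{R}\in(0,+\infty]$ and define
\[
\widetilde{\Omega}:=\{p\}\cup\{p+tv:v\in S^{N-1},\ 0<t<R(v)\}.
\]
By construction this set is open, star-shaped at $p$, contains $\R_{\geq 0}^N$ (since $R>S$ pointwise and $\R_{\geq 0}^N$ is itself a star-shaped ``core'' at $p$ with radii $S$), and sits inside $\Omega$ (since $R<T$ on $U$ and both are $+\infty$ on $S^{N-1}\setminus U$). A diffeomorphism $\Phi:\R^N\to\widetilde{\Omega}$ is then obtained by a radial stretch, for instance $\Phi(y):=p+y/(1+|y|\widetilde{R}(y/|y|))$ for $y\neq 0$ with $\Phi(0):=p$, blended with the translation $y\mapsto p+y$ on a small ball around the origin to absorb the non-smoothness of $y\mapsto \widetilde{R}(y/|y|)$ at $y=0$.
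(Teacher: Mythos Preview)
Your approach has a genuine obstruction, already visible for the simplest neighbourhoods $\Omega=(-\epsilon,\infty)^N$ with $\epsilon>0$ and $N\geq 2$. In that case a direct computation gives $\widetilde T(v)=\widetilde S(v)/(1+\epsilon)$ for every $v\in U$, so the ratio $\widetilde T/\widetilde S$ equals the constant $1/(1+\epsilon)$ on all of $U$ and certainly does not tend to zero near~$\partial U$. On the other hand, any smooth $\widetilde R:S^{N-1}\to[0,\infty)$ that vanishes on the closed set $S^{N-1}\setminus U=S^{N-1}\cap\R_{\geq 0}^N$ must vanish there \emph{to infinite order}, because for $N\geq 2$ this set has nonempty interior in $S^{N-1}$ (for instance the point $\mathbf 1/\sqrt N$), and the derivatives of $\widetilde R$ therefore vanish on the closure of that interior, which is all of $S^{N-1}\cap\R_{\geq 0}^N$. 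Since $\widetilde S(v)=\max_i(-v_i)_+$ vanishes only to first order on~$\partial U$, one gets $\widetilde R/\widetilde S\to 0$ as $v\to\partial U$ from within $U$, which is incompatible with the required lower bound $\widetilde R\geq\widetilde T=\widetilde S/(1+\epsilon)$. In short, for this $\Omega$ no smooth radial profile $\widetilde R$ with the desired sandwich property exists at all, and the refinement you sketch (multiplying by a flat cutoff, or writing $\widetilde R=\widetilde S^{\mathrm{smooth}}\gamma$) cannot help: it is precisely the flat vanishing forced by smoothness that clashes with the merely linear vanishing of $\widetilde T$.

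The paper sidesteps this by a completely different and much shorter construction. Take the smooth Urysohn function $\varphi$ with $\varphi\equiv 1$ on $\R_{\geq 0}^N$ and $\varphi\equiv 0$ on $\R^N\setminus\Omega$, and let $\Phi_t$ be the time-$t$ flow of the bounded vector field $V\mapsto-\varphi(V)\mathbf 1$. Then $\widetilde\Omega:=\Phi_t(\R_{>0}^N)$ works for any $t>0$: it is diffeomorphic to $\R_{>0}^N$ (hence to $\R^N$) because $\Phi_t$ is a diffeomorphism of~$\R^N$; it sits inside $\Omega$ because flow lines cannot cross~$\partial\Omega$, where the field vanishes; and it contains $\R_{\geq 0}^N$ because on that set $\varphi\equiv 1$, so the backward flow is the pure translation by $+t\mathbf 1$, which sends every $w\in\R_{\geq 0}^N$ into $\R_{>0}^N$. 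No radial parametrization, and hence no flat-vanishing obstruction, ever enters.
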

\begin{proof}
Consider~$\ffi$ the function given by  Lemma~\ref{lem:ury},  associated with the (disjoint) closed sets~$F_0:=\R^N\setminus \Omega$ and $F_1:=\R_{\geq 0}^N$. Let~$\mathbf{1}$ be the vector of~$\R^N$ with entries all equal to~$1$.  For any initial data at time~$t=0$, the differential equation
 $$\dot{\textnormal{V}} = -\ffi(\textnormal{V})\mathbf{1}$$  has a unique maximal solution, which is global since~$\ffi$  takes it values in $[0,1]$. One can 
therefore define for all~$v\in \R^N$ a smooth curve~$\textnormal{V}_v:\R\rightarrow\R^N$ equal to~$v$ at time~$t=0$ and solving that equation. Since $\ffi$ vanishes outside of~$\Omega$, for $v \in \Omega$  there holds~$\textnormal{V}_v(\R)\subset \Omega$  and flow lines passing through a point of~$\Omega$  do not exit~$\Omega$. For all $t>0$, the flow~$\Phi_t : \R^N\rightarrow\R^N$ which maps $v$ to $\textnormal{V}_v(t)$ is a  $\mathscr{C}^\infty$-diffeomorphism.  It is easy to see that $\mathring{F_1}$ is infinitely  diffeomorphic to~$\R^N$, and that is therefore also the case for~$t>0$  of the open set $ \widetilde \Omega:=\Phi_t(\mathring{F_1})$.  The set~$ \widetilde \Omega$  is contained in~$\Omega$ (since curves stemming from $\mathring{F_1}$ do not exit~$\Omega$) and finally $ \widetilde \Omega$ contains~$F_1$ since $\ffi$ is equal to~$1$ on~$F_1$. Lemma~\ref{lem:ouv} is proved.
 \qed
\end{proof}
\begin{proof}[Proof of Proposition~{\rm\ref{prop:quadrant}}]
  Using Lemma~\ref{lem:ouv}, one can  assume without loss of generality that~$\Omega $ is infinitely diffeomorphic to~$\R^N$. Thanks to that, we infer the existence of a smooth function~$\gamma:[0,1]\times  \Omega \rightarrow  \Omega$ such that $\gamma(0,\cdot)=0$ and $\gamma(1,\cdot) = \textnormal{Id}_ \Omega$. Indeed, if~$\Omega=\R^N$ then~$\gamma(t,v) := \exp\left(\frac{t-1}{t}\right)v$ does the job and the general case follows by diffeomorphism.

  \medskip
  
  Now, consider an open neighbourhood $\Omega'$ of~$\R_{\geq 0}^N$ whose closure is contained in~$\Omega$, and~$\ffi$ the function given by  Lemma~\ref{lem:ury},  associated with the (disjoint) closed sets~$F_0:=\R^N\setminus \Omega'$ and $F_1=\R_{\geq 0}^N$. The function~$h:v \mapsto   \gamma(\ffi(v),v)$  is smoothly defined  on $\Omega$.  Since it vanishes identically on~$\Omega\setminus \Omega'$,  it can be extended smoothly to~$\R^N$ by zero.  The function~$h$ thus extended takes its values in $\Omega$ and if~$v\in \R_{\geq 0}^N=F_1$, then $\ffi(v)=1$ so~$h(v)=v$. Proposition~\ref{prop:quadrant} is proved. $\qedhere$
\end{proof}

\section*{Acknowledgement}
The authors would like to thank Pierre-Louis Lions for his inspiring online lectures \cite{pilou}, and also for several fruitful discussions and his awareness concerning a previous (false) proof of Proposition~\ref{prop:systlin}. They also thank Vincent Boulard for pointing out a mistake in the previous proof of Proposition~\ref{prop:sign}. Finally they extend their gratitude to the two anonymous referees, whose suggestions and remarks greatly improved the presentation and the results of this paper.

  \printbibliography

    \end{document}